\tikzset{>=stealth}
\newtheorem{thm}{Theorem}[section]
\newtheorem{cor}[thm]{Corollary}
\newtheorem{prop}[thm]{Proposition}
\newtheorem{lem}[thm]{Lemma}
\theoremstyle{definition}
\newtheorem{defn}[thm]{Definition}
\newtheorem{con}[thm]{Construction}
\theoremstyle{remark}
\newtheorem{rem}[thm]{Remark}
\newcommand{\bF}{\mathbb{F}}
\newcommand{\bH}{\mathbb{H}}
\newcommand{\bQ}{\mathbb{Q}}
\newcommand{\bR}{\mathbb{R}}
\newcommand{\bZ}{\mathbb{Z}}
\newcommand\Diff{\mathrm{Diff}}
\newcommand\BDiff{\mathrm{BDiff}}
\newcommand\dDiff{\mathrm{Diff}^{\delta}}
\newcommand\BdDiff{\mathrm{BDiff}^{\delta}}
\newcommand{\hcoker}{/\!\!/}
\newcommand{\hker}{\backslash\!\!\backslash}
\newcommand{\SnSn}{\#_g S^n \times S^n}
\newcommand{\tE}{\text{E}}
\newcommand{\tW}{\text{W}}
\newcommand{\W}{\text{W}_{g,1}}
\newcommand{\WW}{\text{\textnormal{W}}_{g,1}}
\newcommand{\ttW}{\text{\textnormal{W}}}
\let\c@equation\c@thm
\numberwithin{equation}{section}
\title{Homological stability and Stable Moduli of Flat manifold bundles}
\author{Sam Nariman}
\email{nariman@uni-muenster.de}
\address{Mathematical Institute\\
Universit{\"a}t M{\"u}nster\\
Einsteinstr. 62\\
D-48149 M{\"u}nster}
\begin{document}

\begin{abstract}
 We prove that the group homology of the diffeomorphism group of $\#_g S^n \times S^n\backslash \text{int}(D^{2n})$ as a discrete group is independent of $g$ in a range, provided that $n>2$. This answers the high dimensional version of a question posed by Morita about surface diffeomorphism groups made discrete.  The stable homology is isomorphic  to the homology of a certain infinite loop space related to the Haefliger's classifying space of foliations. One geometric consequence of this description of the stable homology is a splitting theorem that implies certain classes called generalized Mumford-Morita-Miller classes  can be detected on flat $(\#_g S^n \times S^n)$-bundles for $g\gg 0$.

\end{abstract}

\maketitle

\tableofcontents

\section{Introduction}
\subsection{Statements of the main results} We begin by fixing some notations appearing in this paper. For a manifold $\mathrm{M}$,  let $\Diff (\mathrm{M}, \partial)$  denote the topological group of $C^\infty$-diffeomorphisms   whose supports are away from $\partial \mathrm{M}$  with the $C^\infty$-topology. The same underlying group equipped with the $\it{discrete}$ topology is denoted by $\mathrm{Diff}^{\delta}(\mathrm{M},\partial)$. 

The purpose of this paper is to study a high dimensional version of a homological stability problem for discrete surface diffeomorphisms posed in \cite[Problem 45]{Morita}. To define higher dimensional analogue of Morita's problem, for $n>2$, we write
 \[
 \text{W}_{g,k}=(\SnSn)\backslash\coprod_1^k \text{int}(D^{2n})
 \]
 which is a manifold with  boundary obtained from the $g$-fold connected sum $\SnSn$  by cutting out the interior of $k$ disjoint disks. We apply Quillen's stability machine to prove homological stability for diffeomorphism groups of $\text{W}_{g,1}$ made discrete. We study the surface diffeomorphism groups in a separate paper.
 

   Let $j: \W\hookrightarrow \tW_{g+1,1}$ be an embedding such that the complement of the interior of $j(\W)$ in $  \tW_{g+1,1}$ is diffeomorphic to $\text{W}_{1,2}$. Having fixed such an embedding, one can define a homomorphism $ \dDiff(\W,\partial)\rightarrow \dDiff(\tW_{g+1,1},\partial)$ by extending diffeomorphisms via identity on the complement of $j(\W)$. Although this homomorphism depends on $j$, any two choices of embeddings lead to conjugate homomorphisms; therefore, we obtain a well-defined map up to homotopy between classifying spaces $\BdDiff(\W,\partial)\rightarrow \BdDiff(\tW_{g+1,1},\partial)$.  Our first main theorem is the following
   
    \begin{thm}\label{MainTheorem}
  For $n>2$ the stabilization map
  \begin{equation*}
   H_k(\BdDiff(\ttW_{g,1},\partial);\bZ)\rightarrow H_k(\BdDiff(\ttW_{g+1,1},\partial);\bZ)
  \end{equation*}
  is  surjection for $k\leq (g-2)/{2}$ and an isomorphism for $k< (g-2)/2$.
 \end{thm}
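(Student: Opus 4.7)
The plan is to apply Quillen's homological stability machine: construct a semi-simplicial set $X_\bullet$ of sufficiently high connectivity on which $\dDiff(\W,\partial)$ acts, and identify simplex stabilizers with analogous discrete diffeomorphism groups of smaller genus. The overall outline mirrors the proof of stability for the \emph{topological} diffeomorphism groups $\Diff(\W,\partial)$ given by Galatius and Randal-Williams, but the passage to the discrete topology requires using a semi-simplicial \emph{set} (of isotopy classes of embeddings) rather than a semi-simplicial space.

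Concretely, I would take $X_p$ to be the set of $(p+1)$-tuples of pairwise disjoint isotopy classes of tethered embeddings of $\text{W}_{1,1}$ into $\W$, with face maps by deletion. The isotopy extension theorem makes the action of $\Diff(\W,\partial)$ on $X_p$ transitive, and hence the action of $\dDiff(\W,\partial)$ is transitive as well since it factors through $\pi_0 \Diff(\W,\partial)$. Because $X_\bullet$ depends only on isotopy classes, the connectivity of its realization (of order $g/2$, with offset calibrated to give the range in the statement) is inherited from the corresponding tethered-handle complex of Galatius--Randal-Williams, whose proof uses surgery and handle trading valid in dimension $2n\geq 6$; this is where the hypothesis $n>2$ enters.

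The technical heart of the argument is the stabilizer identification. For a $p$-simplex $\sigma$, extension by the identity on the embedded tethered handles yields an inclusion $\dDiff(\text{W}_{g-p-1,1},\partial)\hookrightarrow \mathrm{Stab}_{\dDiff(\W,\partial)}(\sigma)$, and the quotient is controlled by the mapping class group of the handle union fixing the tether data. I would argue that at the level of classifying spaces this inclusion induces a homology isomorphism in the relevant range, likely invoking Mather--Thurston type results on compactly supported discrete diffeomorphism groups to absorb the finite mapping-class factor's contribution.

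Feeding these ingredients into the spectral sequence associated with the skeletal filtration of $E\dDiff(\W,\partial)\times_{\dDiff(\W,\partial)} |X_\bullet|$, the $E^1$-page in column $p$ involves $H_*(\BdDiff(\text{W}_{g-p-1,1},\partial);\bZ)$, and the $d^1$-differential encodes an alternating sum of stabilization maps. Together with the vanishing of the abutment in the expected range coming from the connectivity estimate, Quillen's standard induction yields the claimed surjectivity and isomorphism ranges. The principal obstacle is the stabilizer computation: unlike in the topological case, one cannot use fibration sequences of embedding spaces, and must instead analyze the discrete stabilizer directly---this is where the argument for $\dDiff$ departs significantly from the one for $\Diff$.
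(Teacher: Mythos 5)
Your plan has a fundamental flaw at the stabilizer identification, and it is worth pinpointing exactly why passing to isotopy classes fails in the discrete setting. If $X_p$ is built from isotopy classes of tethered embeddings, the stabilizer of a simplex $\sigma=[\phi]$ in $\dDiff(\W,\partial)$ contains the \emph{entire} discrete identity component $\dDiff_0(\W,\partial)$: any $f$ isotopic to the identity carries $\phi$ to an embedding isotopic to $\phi$, so $f\in\mathrm{Stab}(\sigma)$. The stabilizer is therefore an extension of a subgroup of $\pi_0\Diff(\W,\partial)$ by the huge group $\dDiff_0(\W,\partial)$ of the ambient manifold, which is nothing like $\dDiff(\text{W}_{g-p-1,1},\partial)$ and cannot be fed back into the induction. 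The "finite mapping-class factor" you propose to absorb with Mather--Thurston does not exist as a clean quotient, and Mather--Thurston is of no help in controlling $H_*(\dDiff_0(\W,\partial))$. This is precisely the point where the discrete problem departs essentially from the topological one: in the topological case the stabilizer of an embedding is weakly homotopy equivalent to the stabilizer of its isotopy class, so one can work with either; in the discrete case they are vastly different groups.

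The paper's resolution is to make the opposite trade-off. It uses a semi-simplicial \emph{set} $\text{E}_\bullet(\W)$ whose simplices are tuples of germs of actual collared embeddings of a fixed core $C\subset H$, not isotopy classes. Now the stabilizer of a simplex $\phi_\sigma$ is genuinely $\dDiff(\text{W}_{g,1},\partial)_\sigma\cong\dDiff(\text{W}_{g-p-1,1},\partial)$ (via Kreck's cancellation theorem), which is exactly what induction needs. The price is that the action is no longer transitive: the orbit set of $\dDiff(\W,\partial)$ on $\text{E}_p(\W)$ is identified with the configuration space $\text{C}_{p+1}(\bR^{2n-1})$, recording the boundary data of the embedded cores (Lemma 3.11). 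To handle this, the paper uses the "relative" spectral sequence of Charney applied to the \emph{nonstandard} stabilization map $X_\bullet(\W)\to X_\bullet(\text{W}_{g+1,1})$, and the key observation (Corollary 3.12) is that the orbit decompositions on both sides match up, so the relative $E^1$-page is a sum of relative homologies of stabilizers. Even then the argument does not close on its own: the extreme term $E^1_{0,\lfloor(g-2)/2\rfloor}$ requires a separate and delicate factorization lemma (Lemma 4.1), where one must modify Wahl's group-theoretic cancellation argument because no single conjugating element $t\in\dDiff(\text{W}_{g+1,1},\partial)$ works for all of $\dDiff(\text{W}_{g+1,1},\partial)_\sigma$ simultaneously; instead one finds a suitable $t$ only for each finite set of diffeomorphisms, using the pushing-collar maps of Section 2. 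Your outline would need to be rebuilt around germs rather than isotopy classes and around the relative, non-transitive framework before it could be completed.
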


\begin{rem}
  If we denote $C^1$-diffeomorphisms of $\W$ by $\Diff^1(\W,\partial)$, one consequence of  Tsuboi's remarkable theorem  \cite{tsuboi1989foliated} and the h-principle theorem of Thurston \cite{thurston1974foliations} is that ${\mathrm {BDiff}^{\delta,1}}(\W,\partial)$ is homology equivalent to $\BDiff^1(\W,\partial)$. But it is well known that the group $\Diff^1(\W,\partial)$ with the $C^1$-topology is weakly equivalent to the $C^{\infty}$-diffeomorphism group $\Diff(\W,\partial)$, therefore $\BDiff^1(\W,\partial)\simeq \BDiff(\W,\partial)$. Hence, the homological stability for $C^1$-diffeomorphisms with the discrete topology, ${\mathrm {Diff}^{\delta,1}}(\W,\partial)$ is already implied by the homological stability of $\BDiff(\W,\partial)$  which was proved by Galatius and Randal-Williams \cite{Galatius-Randal-Williams}.
\end{rem}
\begin{rem}In fact one can use the Mather-Thurston theory as we explained in \Cref{stablehomology} and the homological stability for tangential structures in \cite[Theorem 1.4 (ii)]{galatius2014homological} to give a short proof of \Cref{MainTheorem} along the lines of the sequel to this paper \cite[Section 1.2.3]{nariman2015stable}. But instead of giving a high powered proof of this theorem, we show that one can in fact prove the homological stability of $\dDiff(\W,\partial)$ without resorting to the deep theorem of Mather and Thurston.
\end{rem}

 Theorem \ref{limit homology} below describes the stable homology of these diffeomorphism groups with discrete topology  as the homology of an infinite loop space, which we now describe.  Let $\Gamma_{2n}$ be the Haefliger category, i.e. the topological groupoid whose objects are points in $\mathbb{R}^{2n}$ with its usual topology and morphisms between two points, say $x$ and $y$, are germs of diffeomorphisms that send $x$ to $y$ with the sheaf topology (for details see  \cite{haefliger1971homotopy}). The classifying space of this groupoid plays an important role in classifying foliations up to concordance.  By $\mathrm{S}\Gamma_{2n}$, we mean the subcategory of $\Gamma_{2n}$ with the same objects, but whose morphisms are orientation preserving diffeomorphisms. Its classifying space $\mathrm{BS}\Gamma_{2n}$, classifies  Haefliger structures together with an orientation on their normal bundles up to concordance. Sending a germ of a diffeomorphism to its derivative induces a map  
   \[
 \begin{tikzpicture}[node distance=2.2cm, auto]
  \node (A) {$\nu: \mathrm{BS}\Gamma_{2n}$};
  \node (B) [right of=A] {$\mathrm{BGL}_{2n}^+(\bR)$, };
  \draw [->] (A) to node {$$}(B);
\end{tikzpicture}
\]
where $\mathrm{GL}_{2n}^+(\bR)$ denotes the group of real matrices with positive determinants.  This map $\nu$ classifies the normal bundle to the universal  Haefliger structure on $\mathrm{BS}\Gamma_{2n}$. Let $\mathrm{BGL}_{2n}^+(\bR)\langle n\rangle$ be the $n$-connected cover of $\mathrm{BGL}_{2n}^+(\bR)$. Given  that $\nu$ is a $(2n+2)$-connected map \cite[Remark 1]{haefliger1971homotopy}, we have the following homotopy pullback diagram
   \[
 \begin{tikzpicture}[node distance=2cm, auto]
  \node (A) {$\mathrm{BS}\Gamma_{2n}$};
  \node (B) [below of=A] {$\mathrm{BGL}_{2n}^+(\bR)$.};
   \node (C) [left of= A, node distance=3.2cm] {$\mathrm{BS}\Gamma_{2n}\langle n\rangle$};
  \node (D) [below of=C] {$\mathrm{BGL}_{2n}^+(\bR)\langle n\rangle$};
  \draw [->] (C) to node {$\nu\langle n\rangle$}(D);
  \draw [->] (A) to node {$\nu$}(B);
  \draw [->] (C) to node {$\theta$}(A);
  \draw [->] (D) to node {$\theta^n$}(B);
\end{tikzpicture}
\]

 Take the inverse of the tautological bundle, $-\gamma$, on $\mathrm{BGL}_{2n}^+(\bR)$ and pull it back to $\mathrm{BS}\Gamma_{2n}\langle n\rangle$ via $\theta\circ \nu$. 
 We denote the Thom spectrum of this virtual bundle by $\bold{MT}\nu^n$, for a definition of the Thom spectrum of a virtual bundle see e.g. \cite[12.29]{switzer1975algebraic}. 
We shall write $\Omega^{\infty}\bold{MT}\nu^n$ for the associated infinite loop space and $\Omega_0^{\infty}\bold{MT}\nu^n$ for its base point component. In \Cref{stablehomology}, we use a model for $ \BdDiff(\text{W}_{g,1},\partial)$ to construct a map
   \[
 \begin{tikzpicture}[node distance=3.2cm, auto]
  \node (A) {$\alpha: \BdDiff(\text{W}_{g,1},\partial)$};
  \node (B) [right of=A] {$\Omega_0^{\infty}\bold{MT}\nu^n$.};
  \draw [->] (A) to node {$$}(B);
\end{tikzpicture}
\]
Our second main theorem is
 \begin{thm}\label{limit homology}
 The map induced by $\alpha$ on the homology
    \[
 \begin{tikzpicture}[node distance=4.2cm, auto]
  \node (A) {$H_k(\BdDiff(\ttW_{g,1},\partial);\bZ)$};
  \node (B) [right of=A] {$H_k(\Omega_0^{\infty}\bold{MT}\nu^n;\bZ)$,};
  \draw [->] (A) to node {$$}(B);
\end{tikzpicture}
\]
is a surjection for $k\leq (g-2)/{2}$ and an isomorphism for $k<(g-2)/2$.

 \end{thm}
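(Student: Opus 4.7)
The plan is to follow the Galatius--Randal-Williams (GRW) strategy for identifying stable moduli of manifolds, with the tangential structure taken to be $\nu\langle n\rangle : \mathrm{BS}\Gamma_{2n}\langle n\rangle \to \mathrm{BGL}_{2n}^+(\bR)\langle n\rangle$ rather than a map to $B\mathrm{O}$. The three main ingredients are: (i) a Galatius--Madsen--Tillmann--Weiss (GMTW) identification of the classifying space of a cobordism category of $2n$-manifolds with $\nu^n$-structure, (ii) a parametrized surgery argument identifying the classifying space of the relevant endomorphism monoid, and (iii) the homological stability result of \Cref{MainTheorem}.

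\emph{Step 1 (cobordism category and GMTW).} First I would introduce a topological cobordism category $\mathcal{C}_{\nu^n}$ whose morphisms are $2n$-dimensional cobordisms equipped with a $\nu^n$-structure, i.e.\ a germ of a Haefliger structure transverse to the manifold together with a compatible lift to the $n$-connective cover $\mathrm{BGL}_{2n}^+(\bR)\langle n\rangle$. A GMTW-type theorem for this tangential structure, using the sheaf topology on morphisms in $\mathrm{S}\Gamma_{2n}$, yields a weak equivalence
\[
  \Omega B\mathcal{C}_{\nu^n} \simeq \Omega^\infty \bold{MT}\nu^n.
\]
The map $\alpha$ built in \Cref{stablehomology} would then factor through this: from $\BdDiff(\ttW_{g,1},\partial)$ into the classifying space of an endomorphism monoid of a chosen object in $\mathcal{C}_{\nu^n}$, and then via the identification above.

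\emph{Step 2 (parametrized surgery).} Next one shows that the natural map from the endomorphism monoid of the chosen object to $\Omega B\mathcal{C}_{\nu^n}$ is a group completion, and that the monoid itself has classifying space weakly equivalent to $\coprod_{g\geq 0}\BdDiff(\ttW_{g,1},\partial)$ (the space of $\nu^n$-structures contributing only a contractible factor). For this I would adapt the GRW parametrized surgery argument to the flat setting: the handle trades are implemented inside the space of $\nu^n$-structures on $\ttW_{g,1}$ restricting to a fixed one on the boundary. The requisite connectivity of this space of structures comes from (a) the $(2n+2)$-connectivity of $\nu$, and (b) Thurston's h-principle for Haefliger structures --- together they play the role that $B\mathrm{O}\langle n\rangle \to B\mathrm{O}$ does in the smooth case.

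\emph{Step 3 (group completion and conclusion).} The group completion theorem then yields, componentwise, that
\[
  \hocolim_{g\to\infty} \BdDiff(\ttW_{g,1},\partial) \longrightarrow \Omega^\infty_0 \bold{MT}\nu^n
\]
is an integral homology equivalence. Combining this with \Cref{MainTheorem}, which provides a range $k < (g-2)/2$ (resp.\ $k\leq(g-2)/2$) in which the stabilization maps are isomorphisms (resp.\ surjections), gives the claimed range for $\alpha$.

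\emph{Main obstacle.} Step 2 is the heart of the argument. Although the skeleton of the GRW parametrized surgery argument transfers, each surgery move must be carried out while keeping the transverse Haefliger structure coherent across families, and the contractibility of the fibres of the forgetful map from $\nu^n$-structures down to ordinary tangential structures is not formal: it rests on Thurston's h-principle in dimension $2n$. Verifying that this h-principle, combined with the $(2n+2)$-connectivity of $\nu$, provides enough control to run the GRW induction in the claimed range --- and that this is precisely the point at which the assumption $n>2$ enters --- is the crux of the proof.
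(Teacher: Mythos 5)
Your proposal is in the right spirit but takes a genuinely harder route than the paper, and the gap you flag in Step 2 is real, not merely a technicality to be checked. You propose to build a cobordism category whose morphisms are foliated (Haefliger-structured) cobordisms, prove a GMTW equivalence for it, and then run the GRW parametrized surgery argument ``inside the space of $\nu^n$-structures,'' with Thurston's h-principle supplying connectivity at each surgery. But the GMTW theorem and the GRW surgery machine are statements about abstract tangential structures --- bundle maps $T\W\to\theta^*\gamma$ for a space $B\to\mathrm{BGL}_{2n}^+(\bR)$ --- not about geometric structures such as transverse foliations on the total manifold. Carrying out handle trades while keeping a Haefliger structure coherent in families would require a parametrized h-principle for foliated surgery; you correctly identify this as the crux, but it is neither standard nor resolved in your outline.

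The paper reverses the order of operations to avoid this entirely. Thurston's theorem is applied \emph{first}, and crucially it is made $\Diff(\W,\partial)$-equivariant: the bulk of \Cref{stablehomology} constructs categorical models for $\overline{\BDiff(\W,\partial)}$, $\mathrm{BS}\Gamma_{2n}$ and $\mathrm{BGL}_{2n}^+(\bR)$ so that the Thurston homology equivalence $f_{\W}\colon\overline{\BDiff(\W,\partial)}\to\mathcal{S}(\W,\partial)$ becomes $\Diff(\W,\partial)$-equivariant. Taking homotopy quotients then yields \Cref{cor2}, a homology equivalence $\BdDiff(\W,\partial)\to \mathrm{Bun}_\partial(T\W,\nu^*\gamma)\hcoker\Diff(\W,\partial)$. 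The target is a moduli space of tangential $\nu$-structures in the literal GRW sense, so \cite[Theorem 1.8]{galatius2012stable} (restated as \Cref{universalend}) applies as a black box to the tangential structure $\nu\colon\mathrm{BS}\Gamma_{2n}\to\mathrm{BGL}_{2n}^+(\bR)$; no Haefliger geometry enters the surgery step, since a tangential structure is just a space over $\mathrm{BGL}_{2n}^+(\bR)$. Combining with \Cref{MainTheorem} gives the stated range. The lesson is to use Thurston's theorem equivariantly and up front to translate the flat-bundle problem into a tangential-structure problem, rather than trying to embed the h-principle inside the surgery machine; the equivariance of $f_{\W}$ is where the genuine work lies, and your proposal does not address it.
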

 
 \subsection{Applications} The identity homomorphism induces a map
 $$\iota: \BdDiff(\W,\partial)\rightarrow \BDiff(\W,\partial).$$ As applications of \Cref{MainTheorem} and \Cref{limit homology}, we prove two results  about the map induced by $\iota$ on the  cohomology in the stable range. 
 
 Let $\bold{MT}\theta^n$ be the Thom spectrum of the virtual bundle $(\theta^n)^*(-\gamma)$ over the base $\mathrm{BGL}_{2n}^+(\bR)\langle n\rangle$ and let $\Omega_0^{\infty}\bold{MT}\theta^n$ be the base point component of the infinite loop space associated to this spectrum.  Galatius and Randal-Williams showed in \cite{Galatius-Randal-Williams} that even integrally the stable cohomology of topologized diffeomorphisms of $\W$ is isomorphic to the cohomology of $\Omega_0^{\infty}\bold{MT}\theta^n$. One can easily see that the rational cohomology of $\Omega_0^{\infty}\bold{MT}\theta^n$ is a polynomial algebra generated by certain classes $\kappa_c$ that are called generalized MMM classes (see \Cref{remarks} for definitions). 

 Unlike the description of the stable cohomology of topologized diffeomorphisms, it is not easy to compute $H^*(\Omega_0^{\infty}\bold{MT}\nu^n;\bQ)$. In order to construct nontrivial classes in the stable cohomology of $\dDiff(\text{W}_{g,1},\partial)$, one might attempt to pull back  generalized MMM classes from the cohomology of $ \BDiff (\text{W}_{g,1},\partial)$. In rational cohomology, $H^*(\BdDiff (\text{W}_{g,1},\partial);\bQ)$, the pullbacks of the generalized MMM classes vanish in degrees larger than $4n$ according to a result of Bott (see Section 6). We show that the situation is very different for cohomology with finite field coefficients.  
     \begin{thm}\label{splitting}
   For any prime $p$, the natural map
   \[
   \begin{tikzpicture}[node distance=3.9cm, auto]
  \node (A) {$ H^*(\Omega_0^{\infty}\bold{MT}\theta^n;\bF_p)$};
  \node (B) [right of=A] {$ H^*(\Omega_0^{\infty}\bold{MT}\nu^n;\bF_p)$};
  \draw [right hook->] (A) to node {$$}(B);
\end{tikzpicture}
   \]
    is split injective.
  \end{thm}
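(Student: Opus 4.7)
The map in the theorem is $(\Omega^\infty\phi)^*$, where $\phi:\bold{MT}\nu^n\to\bold{MT}\theta^n$ is the spectrum map obtained by Thomifying $\theta:\mathrm{BS}\Gamma_{2n}\langle n\rangle\to\mathrm{BGL}_{2n}^+(\bR)\langle n\rangle$. To split it, the cleanest route is to produce a homotopy section $\sigma:\bold{MT}\theta^n\to\bold{MT}\nu^n$ of $\phi$ (or more weakly a space-level section of $\Omega^\infty\phi$) and then apply $H^*(-;\bF_p)$; since $\Omega^\infty$ and $H^*(-;\bF_p)$ are functors, retractions descend. Via the Thom isomorphism, finding $\sigma$ reduces to splitting the pullback $\theta^*:H^*(\mathrm{BGL}_{2n}^+(\bR)\langle n\rangle;\bF_p)\to H^*(\mathrm{BS}\Gamma_{2n}\langle n\rangle;\bF_p)$ as modules over the mod-$p$ Steenrod algebra. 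Note that the analogous rational statement is false by the vanishing result of Bott invoked just before the theorem, so the argument must genuinely exploit the mod-$p$ structure.

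A natural candidate for $\sigma$ on the base arises from the canonical homomorphism $\mathrm{GL}_{2n}^+(\bR)^\delta\to\mathrm{S}\Gamma_{2n}$ that sends a linear isomorphism to its germ at the origin. Taking classifying spaces and $n$-connected covers yields a map $\mathrm{BGL}_{2n}^+(\bR)^\delta\langle n\rangle\to\mathrm{BS}\Gamma_{2n}\langle n\rangle$ whose composite with $\theta$ is the standard change-of-topology map $\mathrm{BGL}_{2n}^+(\bR)^\delta\langle n\rangle\to\mathrm{BGL}_{2n}^+(\bR)\langle n\rangle$. Thomification gives a spectrum map $\bold{MT}\theta^{n,\delta}\to\bold{MT}\nu^n$, and composing with a putative cohomological section of the change-of-topology map would realize the desired splitting.

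The hard part is precisely this last step: the change-of-topology map $\mathrm{BGL}_{2n}^+(\bR)^\delta\to\mathrm{BGL}_{2n}^+(\bR)$ is not known to be $\bF_p$-cohomologically split, since this is a case of the Friedlander--Milnor conjecture that remains open for $\mathrm{GL}_{2n}^+(\bR)$. My plan to bypass the conjecture is to argue at the level of Thom spectra directly, exploiting (i) the fact that $\nu$ is $(2n+2)$-connected, so that $\phi^*$ is an isomorphism in the stable range, and (ii) the $E_\infty$-structure on both $\bold{MT}\nu^n$ and $\bold{MT}\theta^n$, which pulls the stable-range section through the Steenrod and Dyer--Lashof operations on the cohomology of the associated infinite loop spaces. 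The central technical step will be verifying that the obstructions to extending the partial splitting to all degrees vanish mod $p$; these obstructions live in the $\bF_p$-cohomology of the homotopy fiber of $\theta$ (an $n$-connective cover of the highly connected Haefliger fiber $\overline{\mathrm{BS}\Gamma}_{2n}$), and should be controlled by a combination of its high connectivity and the Haefliger-type geometric structure on the universal foliated normal bundle.
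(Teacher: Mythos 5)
Your proposal correctly identifies the shape of the problem and the correct obstruction: the natural candidate splitting passes through $\mathrm{BGL}_{2n}^+(\bR)^\delta\to\mathrm{BS}\Gamma_{2n}\to\mathrm{BGL}_{2n}^+(\bR)$, and directly splitting the last map mod $p$ would be an instance of the Friedlander--Milnor conjecture for $\mathrm{SO}(2n)$, which is open. This diagnosis matches the paper. However, your proposed workaround is not a proof: the ``plan to bypass'' via $E_\infty$-structures, Dyer--Lashof operations, and an obstruction-theoretic extension is left entirely unverified, and there is no reason the obstructions would vanish. In particular, the $(2n+2)$-connectivity of $\nu$ only gives agreement of spectrum cohomology in a bounded range of degrees, while all the interesting behavior of the infinite-loop-space cohomology is generated much further out; and the cohomology of the Haefliger fiber $\overline{\mathrm{BS}\Gamma}_{2n}$ is expected to be enormous (Godbillon--Vey and related secondary classes), so its connectivity alone does nothing to control obstructions.

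The key idea you are missing is the reduction to the \emph{normalizer of a maximal torus}. Fix the maximal torus $T\subset\mathrm{SO}(2n)$ and its normalizer $\mathrm{N}(T)$. The inclusion $\mathrm{BN}(T)\to\mathrm{BSO}(2n)$ has fiber $\mathrm{SO}(2n)/\mathrm{N}(T)$, which has Euler characteristic $1$ (Hopf--Samelson); so the Becker--Gottlieb transfer gives a stable splitting of $\mathrm{BSO}(2n)^{-\gamma}$ off of $\mathrm{BN}(T)^{-i}$ integrally, no $p$-completion yet needed. On the discrete side, Milnor's conjecture \emph{is} a theorem for tori: $(\mathrm{B}T^\delta)^{\wedge}_p\simeq(\mathrm{B}T)^{\wedge}_p$ (Milnor, Lemma~3). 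Since $\mathrm{N}(T)$ is an extension of the finite Weyl group by $T$, comparing Serre spectral sequences for $T\to\mathrm{N}(T)\to W$ in the discrete and topological settings shows that $\mathrm{B}(\mathrm{N}(T)^\delta)\to\mathrm{BN}(T)$ is a mod-$p$ homology isomorphism, hence a $p$-adic equivalence. Composing the transfer for $\mathrm{BN}(T)\to\mathrm{BSO}(2n)$ with the inverse of this $p$-adic equivalence and then with $\mathrm{B}(\mathrm{N}(T)^\delta)\to\mathrm{BS}\Gamma_{2n}$ produces the desired section of $\nu'$ after $p$-completion. The paper then lifts this to the $n$-connected covers by pulling the whole diagram back along $\mathrm{BSO}(2n)\langle n\rangle\to\mathrm{BSO}(2n)$ (using that Haefliger's connectivity makes the relevant square a homotopy pullback), and finally passes from a spectrum-level section to the infinite loop space using $(\Omega_0^\infty X)^\wedge_p\simeq\Omega_0^\infty(X^\wedge_p)$. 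Your proposal would need this transfer-to-$\mathrm{N}(T)$ reduction to become a proof.
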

  \begin{cor}
  For any prime $p$, the natural map
\[  \begin{tikzpicture}[node distance=4.5cm, auto]
  \node (A) {$H^*(\BDiff(\WW,\partial);\bF_p)$};
  \node (B) [right of=A] {$H^*(\BdDiff(\WW,\partial);\bF_p)$};
  \draw [right hook->] (A) to node {$$}(B);
\end{tikzpicture}
\]
is injective, provided that $*< (g-2)/2$.
  \end{cor}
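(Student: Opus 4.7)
The plan is to deduce injectivity of $\iota^*$ from Theorem~\ref{splitting} by identifying both sides with the stable cohomologies of the two infinite loop spaces in the given range. Concretely, I would assemble the natural commutative square
\[
\begin{tikzcd}
\BdDiff(\WW,\partial) \ar[r,"\alpha"] \ar[d,"\iota"] & \Omega^{\infty}_0\bold{MT}\nu^n \ar[d] \\
\BDiff(\WW,\partial) \ar[r] & \Omega^{\infty}_0\bold{MT}\theta^n,
\end{tikzcd}
\]
in which the bottom horizontal arrow is the Galatius--Randal-Williams parametrized Pontryagin--Thom map \cite{Galatius-Randal-Williams} and the right vertical arrow is induced on infinite loop spaces by the canonical map of Thom spectra $\bold{MT}\nu^n\to\bold{MT}\theta^n$ covering $\nu\langle n\rangle$. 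Commutativity is built into the construction of $\alpha$ to be carried out in \Cref{stablehomology}: a flat $\WW$-bundle carries a Haefliger structure transverse to its fiberwise foliation whose normal bundle is canonically identified with the vertical tangent bundle, and this is precisely the datum needed to lift the Galatius--Randal-Williams construction from $\bold{MT}\theta^n$ to $\bold{MT}\nu^n$.

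Next I would apply $H^*(-;\bF_p)$ to the square. By Theorem~\ref{limit homology} the map $\alpha$ is an integral homology isomorphism for $k<(g-2)/2$; since $\bF_p$ is a field, the universal coefficient theorem promotes this to an $\bF_p$-cohomology isomorphism $\alpha^*$ in the same range. The Galatius--Randal-Williams theorem yields the analogous conclusion for the lower horizontal arrow in the same range of degrees. Theorem~\ref{splitting} asserts that the right vertical map is split injective on $\bF_p$-cohomology, hence in particular injective. An elementary diagram chase then gives injectivity of $\iota^*$ for $*<(g-2)/2$.

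The only genuine obstacle is verifying commutativity of the square, which reduces to checking that the construction of $\alpha$ in \Cref{stablehomology} is natural with respect to the derivative map $\nu\langle n\rangle$ of normal structures. Once this is in place, the corollary is a formal consequence of the two stable-range identifications together with Theorem~\ref{splitting}.
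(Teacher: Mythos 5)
Your proposal is correct and coincides with what the paper intends: the corollary is treated as immediate from \Cref{limit homology}, the Galatius--Randal-Williams identification of $H^*(\BDiff(\WW,\partial))$ with $H^*(\Omega^\infty_0\bold{MT}\theta^n)$ in the stable range, and \Cref{splitting}, fed into the commutative square you describe. Your remark that the commutativity of the square follows from the naturality of the construction of $\alpha$ with respect to the map of tangential structures $\nu^n\to\theta^n$ is exactly the point the paper leaves implicit, and your universal-coefficient reduction from integral homology to $\bF_p$-cohomology in the stable range is the correct bookkeeping.
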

  This corollary implies that in the stable range, those MMM-classes that are nontrivial in  $H^*(\BDiff(\WW,\partial);\bZ)$ are also nontrivial in $H^*(\BdDiff(\WW,\partial);\bZ)$. Akita, Kawazumi and Uemura in \cite{MR1829309} used finite group actions on surfaces to prove the independence of MMM-classes for surface bundles. Their method also proves that there are flat surface bundles with nontrivial MMM-classes. In \Cref{sec6}, we use a Lie group action on $\tW_{g}$ described in \cite{galatius2015tautological} and a theorem of Milnor in \cite{milnor1983homology} to prove that  certain MMM-classes are nontrivial even in the unstable range of $H^*(\BdDiff(\WW,\partial);\bZ)$.
   
   The group homology of $\dDiff(\text{W}_{g,1},\partial)$ with integer or rational coefficients is believed to be gigantic and although  there  is not much known about the cohomology of $\Omega_0^{\infty}\bold{MT}\nu^n$,   \Cref{limit homology} implies that there are nontrivial cohomology classes arising from secondary characteristic classes of foliations known as Godbillon-Vey classes that vary continuously. More precisely, in \Cref{remarks} we show
  \begin{cor}\label{GV}
  For  $4n+4\leq g$, there is a surjection
    \[
 \begin{tikzpicture}[node distance=3.3cm, auto]
  \node (A) {$H_{2n+1}(\BdDiff (\ttW_{g,1},\partial);\bQ)$};
  \node (B) [right of=A] {$\bR^{v_{2n}}$};
  \draw [->>] (A) to node {$$}(B);
\end{tikzpicture}
\]
where $v_{2n}$ denotes the size  of a certain set of secondary characteristic classes $V_{2n}$ in $H_{4n+1}(\mathrm{BS}\Gamma_{2n}; \bZ)$ (see \cite[Remark 2.4] {MR769761} for detailed description of the set $V_{2n}$). For $n>2$, the number $v_{2n}$ is at least $3$.
  \end{cor}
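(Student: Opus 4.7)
The plan is to stitch together three surjections. First, since $4n+4\leq g$ implies $2n+1\leq (g-2)/2$, \Cref{limit homology} gives a surjection
\begin{equation*}
\alpha_*\colon H_{2n+1}(\BdDiff(\WW,\partial);\bQ)\twoheadrightarrow H_{2n+1}(\Omega_0^\infty\bold{MT}\nu^n;\bQ).
\end{equation*}
It therefore suffices to construct a surjection from $H_{2n+1}(\Omega_0^\infty\bold{MT}\nu^n;\bQ)$ onto $\bR^{v_{2n}}$.

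Each secondary class $c\in V_{2n}\subset H^{4n+1}(\mathrm{BS}\Gamma_{2n};\bR)$ pulls back along $\theta$ and then, via the Thom isomorphism for the rank $-2n$ virtual bundle $(\theta\circ\nu)^*(-\gamma)$, defines a cohomology class in $H^{2n+1}(\bold{MT}\nu^n;\bR)$. Representing this class by a spectrum map $\bold{MT}\nu^n\to\Sigma^{2n+1}H\bR$ and applying $\Omega_0^\infty$ produces $\widetilde c\in H^{2n+1}(\Omega_0^\infty\bold{MT}\nu^n;\bR)$; pairing against homology as $c$ ranges over $V_{2n}$ assembles into a linear map
\begin{equation*}
\Phi\colon H_{2n+1}(\Omega_0^\infty\bold{MT}\nu^n;\bQ)\to\bR^{v_{2n}}.
\end{equation*}
To verify that $\Phi$ is onto, I would restrict it to the module of degree $2n+1$ primitives. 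By Milnor--Moore applied to the connected $H$-space $\Omega_0^\infty\bold{MT}\nu^n$, the rational Hurewicz map embeds $\pi_{2n+1}(\bold{MT}\nu^n)\otimes\bQ$ into $H_{2n+1}(\Omega_0^\infty\bold{MT}\nu^n;\bQ)$ as those primitives, and the rational Hurewicz for spectra together with the homological Thom isomorphism identifies this left-hand side with $H_{4n+1}(\mathrm{BS}\Gamma_{2n}\langle n\rangle;\bQ)$. Under these identifications the restriction of $\Phi$ to primitives is exactly the Kronecker pairing with the pulled-back classes $\{\theta^* c\}_{c\in V_{2n}}$.

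It remains to show that these $v_{2n}$ real cohomology classes on $\mathrm{BS}\Gamma_{2n}\langle n\rangle$ induce a surjection $H_{4n+1}(\mathrm{BS}\Gamma_{2n}\langle n\rangle;\bQ)\twoheadrightarrow \bR^{v_{2n}}$. This is the crux and the main obstacle. Thurston's continuous-variation method, in the higher-dimensional form recorded in \cite{MR769761}, produces families of codimension-$2n$ foliations realizing $v_{2n}$ linearly independent directions of the $V_{2n}$ classes on $\mathrm{BS}\Gamma_{2n}$; what must be verified is that the underlying foliated manifolds can be chosen with the right tangential refinement, so that their classifying maps factor through the $n$-connected cover $\theta\colon\mathrm{BS}\Gamma_{2n}\langle n\rangle\to\mathrm{BS}\Gamma_{2n}$. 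In practice this amounts to checking that the standard examples realizing the continuous variation (e.g.\ foliations on products with high-dimensional factors) admit normal $n$-connected tangential structures, a fact that should follow from the flexibility of Thurston's construction in the given codimension. Granted this refinement, the composition $\Phi\circ\alpha_*$ gives the desired surjection.
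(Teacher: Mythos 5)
Your outline is structurally correct up to the point you yourself flag as ``the crux and the main obstacle,'' and there the proposal has a genuine gap. You try to show that Thurston's families of foliations can be chosen to ``admit normal $n$-connected tangential structures'' and wave this through with ``should follow from the flexibility of Thurston's construction.'' That is not a proof, and more importantly it is the wrong thing to try to verify: you are attempting to lift individual foliated manifolds along $\theta\colon\mathrm{BS}\Gamma_{2n}\langle n\rangle\to\mathrm{BS}\Gamma_{2n}$, which is delicate and not what the argument needs.

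The paper's resolution is cleaner and avoids the issue entirely. Thurston's continuous-variation theorem is a statement about $\overline{{\mathrm B}\Gamma_{2n}}$, the homotopy fiber of $\nu\colon\mathrm{BS}\Gamma_{2n}\to \mathrm{BGL}_{2n}^+(\bR)$: it gives a surjection $H_{4n+1}(\overline{{\mathrm B}\Gamma_{2n}};\bZ)\twoheadrightarrow\bR^{v_{2n}}$ by evaluating the classes in $V_{2n}$ (which, being secondary invariants, are defined precisely on foliations with trivialized normal bundle, i.e.\ on $\overline{{\mathrm B}\Gamma_{2n}}$). Since $\nu$ is $(2n+2)$-connected, $\overline{{\mathrm B}\Gamma_{2n}}$ is $(2n+1)$-connected, and in particular $n$-connected, so the composite $\overline{{\mathrm B}\Gamma_{2n}}\to \mathrm{BS}\Gamma_{2n}$ lifts through the $n$-connected cover $\mathrm{BS}\Gamma_{2n}\langle n\rangle$ by elementary obstruction theory. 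This gives the desired surjection $H_{4n+1}(\mathrm{BS}\Gamma_{2n}\langle n\rangle;\bQ)\twoheadrightarrow\bR^{v_{2n}}$ with no case-by-case check of examples. Your reduction via \Cref{limit homology}, the Thom isomorphism, and the surjectivity of the suspension $H_{2n+1}(\Omega_0^\infty\bold{MT}\nu^n;\bQ)\twoheadrightarrow H_{2n+1}(\bold{MT}\nu^n;\bQ)$ (your Milnor--Moore argument is a slightly heavier route to the same fact; the paper deduces it directly from the rational Hurewicz theorem for spectra) are all fine; the only missing idea is replacing the hand-waved lifting of concrete foliations by the connectivity of $\overline{{\mathrm B}\Gamma_{2n}}$.
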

  \begin{rem}\label{rem1}
  It is easy to use \Cref{GV} and {\it discontinuous invariants} as in \cite{MR877332}, to show that for $k\leq \frac{g-2}{4n+2}$, the group $H_{(2n+1)k}(\BdDiff (\ttW_{g,1},\partial);\bQ)$ is nontrivial (see \Cref{rem6.11} for a precise statement).
  \end{rem}
\subsection*{Outline of the paper}  This paper is organized as follows: in \Cref{maps}, we discuss various models of the stabilization maps and prove that although they are not homotopic, they induce  homology isomorphisms in the same range. In \Cref{resolution}, we construct a highly connected semisimplicial set on which $\dDiff(\W, \partial)$ acts and we determine the set of orbits of this action. In \Cref{spectralsequence}, we use the ``relative" spectral sequence argument in the sense of \cite{charney1987generalization} to establish homological stability. In \Cref{stablehomology},  we apply Thurston's theorem about classifying foliations to prove \Cref{limit homology} and we use a transfer argument to prove our splitting \Cref{splitting}. In \Cref{remarks}, we discuss various applications of \Cref{MainTheorem}, \Cref{limit homology}, and \Cref{splitting} to obtain partial results about characteristic classes of flat $\W$-bundles. 
\section*{Acknowledgments} I would like to thank my advisor, S\o ren Galatius, for proposing this problem. Without his help and his encouragement, this article would have  never existed. I would also like to thank Oscar Randal-Williams for many helpful discussions, in particular I learned  the method of ``relative" spectral sequence from him. I also owe Cary Malkiewich for his helpful comments on the proof of \Cref{split}. I would like to thank Steve Hurder for pointing out to me that there are more continuously varying secondary classes than I wrote in the first draft. I would also like to thank Alexander Kupers and the referee for their careful reading and detailed comments on the first draft that made me considerably improve  the paper.  This work was supported in part by NSF grants DMS-1105058 and DMS-1405001.
\section{Stabilization maps induce same map on homology} \label{maps}
For  reasons that will become clear in \Cref{resolution} and \Cref{spectralsequence}, it is convenient to work with a stabilization map that is different from the one defined in the introduction.  In this section, we describe this nonstandard stabilization map and prove it induces a homology isomorphism in all degrees that  the standard stabilization does. 
In the introduction, we defined $\tW_{g,k}$ which is well-defined up to diffeomorphism. We make our choices once and for all and let the notation $\tW_{g,k}$ denote the actual abstract manifold instead of a diffeomorphism class.

  \begin{figure}[h]

\begin{tikzpicture}[scale=.4]
\begin{scope}[shift={(0,7)}]
\begin{scope}[shift={(7,0)}]
\draw[line width=1.05pt] [dashed] (0,.-2.5) arc (-90:90:0.5 and 2.5);
\draw [line width=1.05pt] (0,2.5) arc (90:270:0.5 and 2.5);
\draw [line width=1.05pt] (0,2.5)--(13,2.5);
\draw [line width=1.05pt] (13,-2.5) arc (-90:90:2.5);
\draw [line width=1.05pt] (0,-2.5)--(13,-2.5);
\draw [line width=1.1pt] (4.6,.5) arc (240:300:2.5 and 6.75);
\draw   [line width=1.7pt] (6.87, 0.13) arc (48:150:1.2 and 0.9);
\begin{scope}[shift={(-3,0)}]
\draw [line width=1.05pt] (4.6,.5) arc (240:300:2.5 and 6.75);
\draw   [line width=1.7pt] (6.87, 0.13) arc (48:150:1.2 and 0.9);
\end{scope}
\begin{scope}[shift={(6,0)}]
\draw [line width=1.05pt] (4.6,.5) arc (240:300:2.5 and 6.75);
\draw   [line width=1.7pt] (6.87, 0.13) arc (48:150:1.2 and 0.9);
\end{scope}
\node at  (8,0) {$\bullet$};
\node at (9,0) {$\bullet$};
\node at (10,0) {$\bullet$};
\end{scope}

\draw[line width=1.05pt] [dashed] (-1.2,.-2.5) arc (-90:90:0.5 and 2.5);
\draw [line width=1.05pt] (-1.2,2.5) arc (90:270:0.5 and 2.5);
\draw [line width=1.05pt] (-1.2,2.5)--(3,2.5);
\draw [line width=1.05pt] (-1.2,-2.5)--(3,-2.5);
\node at (-5,0) {a)};
\begin{scope}[shift={(-5,0)}]
\draw [line width=1.05pt] (4.6,.5) arc (240:300:2.5 and 6.75);
\draw   [line width=1.7pt] (6.87, 0.13) arc (48:150:1.2 and 0.9);
\end{scope}
\begin{scope}[shift={(3,0)}]
 \draw [line width=1.05pt] (0,0) ellipse (0.5 and 2.5);
 \draw [<->,thick] (1.5,0)--(2.5,0);
 \node at (2,-1) {glue};
\end{scope}
\node at (1,-3.3) {$\text{W}_{1,2}$};
\node at (13.3,-3.3) {$\text{W}_{g,1}$};
\end{scope}

\begin{scope}[shift={(7,0)}]
\draw[line width=1.05pt] [dashed] (0,.-2.5) arc (-90:90:0.5 and 2.5);
\draw [line width=1.05pt] (0,2.5) arc (90:270:0.5 and 2.5);
\draw [line width=1.05pt] (0,2.5)--(13,2.5);
\draw [line width=1.05pt] (13,-2.5) arc (-90:90:2.5);
\draw [line width=1.05pt] (0,-2.5)--(13,-2.5);
\draw [line width=1.1pt] (4.6,.5) arc (240:300:2.5 and 6.75);
\draw   [line width=1.7pt] (6.87, 0.13) arc (48:150:1.2 and 0.9);
\begin{scope}[shift={(-3,0)}]
\draw [line width=1.05pt] (4.6,.5) arc (240:300:2.5 and 6.75);
\draw   [line width=1.7pt] (6.87, 0.13) arc (48:150:1.2 and 0.9);
\end{scope}
\begin{scope}[shift={(6,0)}]
\draw [line width=1.05pt] (4.6,.5) arc (240:300:2.5 and 6.75);
\draw   [line width=1.7pt] (6.87, 0.13) arc (48:150:1.2 and 0.9);
\end{scope}
\node at  (8,0) {$\bullet$};
\node at (9,0) {$\bullet$};
\node at (10,0) {$\bullet$};
\end{scope}
\draw [line width=1.05pt] (-0.5,2.5) arc (90:270:2.5);
\draw [line width=1.05pt] (3,2.5)--(5,2.5);
\draw [line width=1.05pt] (2.5,1.1)--(4.6,1.1);
\draw [line width=1.9pt] (5,2.5) arc (90:155:0.5 and 2.5);
\draw [line width=1.9pt] (7,2.5) arc (90:155:0.5 and 2.5);
\node at (-5,0) {b)}; 
\draw [line width=1.05pt] (-0.5,2.5)--(3,2.5);
\draw [line width=1.05pt] (-0.5,-2.5)--(3,-2.5);
\begin{scope}[shift={(-5.8,0)}]
\draw [line width=1.05pt] (4.6,.5) arc (240:300:2.5 and 6.75);
\draw   [line width=1.7pt] (6.87, 0.13) arc (48:150:1.2 and 0.9);
\end{scope}
\begin{scope}[shift={(3,0)}]
 \draw [line width=1.05pt] (0,2.5) arc (90:270:0.5 and 2.5);
  \draw [line width=1.05pt] (0.5,1.1) arc (25:-90:0.5 and 2.5);
 \draw [line width=1.05pt][dashed] (0,2.5) arc (90:25:0.5 and 2.5);
 \draw [<->,thick] (2,1.6)--(3,1.6);
\node at (2.5,1) {\small glue};
\end{scope}
\node at (1,-3.3) {$\text{H}$};
\node at (13.3,-3.3) {$\text{W}_{g,1}$};

\end{tikzpicture} 
\caption{a) Standard stabilization map, b) Non-standard stabilization map, for $n=1$}
\label{stabilization}
\end{figure}
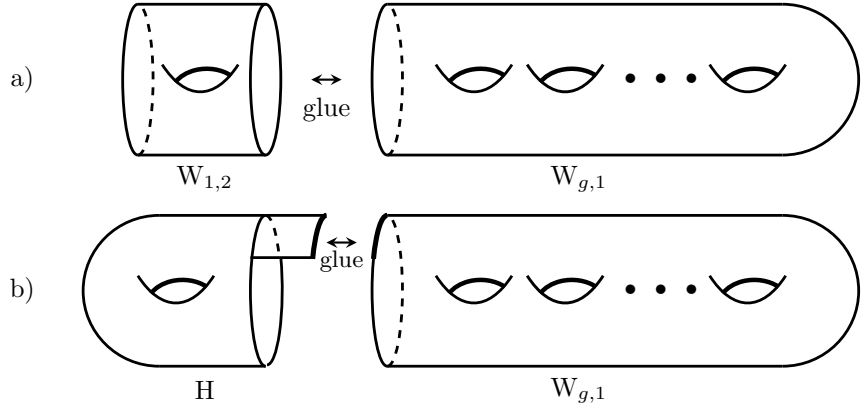

 \subsection*{Standard stabilization maps} Let $j:\tW_{g,1} \hookrightarrow \tW_{g+1,1}$ be an embedding so that $\tW_{g+1,1}\backslash \text{int}(j(\tW_{g,1}))$ is diffeomorphic to $\tW_{1,2}$. This embedding is unique up to diffeomorphisms of $\tW_{g+1,1}$ and extending diffeomorphisms  of $\W$ via the identity on the complement of $j(\W)$ induces a homomorphism $s_j$ from  $\dDiff(\tW_{g,1},\partial)$ to  $\dDiff(\tW_{g+1,1},\partial)$. This injective map is unique up to conjugation and it induces a well-defined map, up to homotopy, from $\BdDiff(\W,\partial)$ to $\BdDiff(\tW_{g+1,1},\partial)$.

 
\subsection*{Non-standard stabilization maps} Another model for a stabilization map that is not conjugate to $s_j$  is described as follows. The boundary connect sum of $\W$ and $\tW_{1,1}$ is diffeomorphic to $\tW_{g+1,1}$. Let   $f :\W\natural \tW_{1,1}\to\tW_{g+1,1}$ be a diffeomorphism. The map $f$ identifies $\W$ with a submanifold of $\tW_{g+1,1}$. Using this identification, we can define a nonstandard stabilization map $ns_{f}:\dDiff(\W,\partial)\to \dDiff(\tW_{g+1,1},\partial)$ that extends elements of $\dDiff(\W,\partial)$ by the identity. 

\begin{defn}[\emph{Pushing collar map}] For a manifold $\text{W}$ with a nonempty boundary, let $[0,1)\times \partial \tW\hookrightarrow \tW$ be a fixed collar neighborhood of the boundary. For $\epsilon <1/2$, let $\lambda_{\epsilon}:[0,1]\to [0,1]$ be an embedding such that $\lambda_{\epsilon}(t)=t/2+\epsilon$ for $t\leq 1.9\epsilon$ and $\lambda_{\epsilon}(t)=t$ for $t\geq 2\epsilon$.  
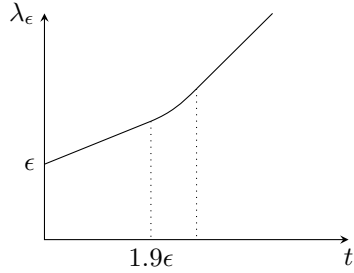
\begin{figure}[ht]
\begin{tikzpicture}
\draw [<->] (0,3) -- (0,0) -- (4,0);
\draw  (0,1) -- (1.4,1.57);
\draw  (2,2) -- (3,3);
\draw  (1.4,1.57) to [out=23,in=-135] (2,2);
\draw [dotted] (1.4,1.57)--(1.4,0);
\draw [dotted] (2,2)--(2,0);
\node [left] at (0,3) {$\lambda_{\epsilon}$};
\node [below] at (4,0) {$t$};
\node [below] at (1.4,0) {$1.9\epsilon$};
\node [left] at (0,1) {$\epsilon$};
\end{tikzpicture}
\caption{ The graph of $\lambda_{\epsilon}(t)$.}
\end{figure}

We define a self-embedding $p_{\epsilon}$ of $\tW$ as follows: on the complement of the $2\epsilon$-collar, the embedding $p_{\epsilon}$ is the identity, and on the $2\epsilon$-collar, it is defined to be
\[
p_{\epsilon}(t,x)= (\lambda_{\epsilon}(t),x).
\]
 \emph{The pushing collar map} is a group homomorphism 
\[
c_{\epsilon}: \dDiff(\tW,\partial)\rightarrow \dDiff(\tW,\partial)
\]
so that the image of $c_{\epsilon}$ lies in the subgroup  of $\dDiff(\tW,\partial)$ consisting of those diffeomorphisms whose supports are away from the $\epsilon$-collar. We denote this subgroup by $\dDiff(\tW, \text{rel $\epsilon$-collar})$. For every $\epsilon<1/2$, we define
\[
c_{\epsilon}(f)(x):= \begin{cases}
x & \text{if } x\in\epsilon{-collar}\\ p_{\epsilon}(f(p_{\epsilon}^{-1}(x))) &\text{if }x\in\tW\backslash\epsilon{-collar}.
\end{cases}
\]
\end{defn}
 \begin{lem}\label{technical}
The pushing collar map $c_{\epsilon}$ acts as the identity on $H_*(\dDiff(\ttW,\partial);\bZ)$.
\end{lem}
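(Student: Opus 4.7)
The plan is to exploit the standard fact that inner automorphisms of a group act as the identity on its integral homology. The obstacle is that $p_\epsilon$ is only a self-embedding of $\ttW$, not a diffeomorphism, so $c_\epsilon$ cannot literally be presented as conjugation by any single element of $\dDiff(\ttW, \partial)$. I will instead show that on any finite subset of $\dDiff(\ttW, \partial)$, $c_\epsilon$ agrees with conjugation by a genuine diffeomorphism $Q \in \dDiff(\ttW, \partial)$. Since every class in $H_k(\dDiff(\ttW, \partial); \bZ)$ is represented by a cycle involving only finitely many group elements, this local inner-automorphism property suffices to conclude that $c_{\epsilon, *} = \mathrm{id}$.

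Given a cycle $z \in C_k(\dDiff(\ttW, \partial); \bZ)$, the union $K$ of the supports of the group elements appearing in $z$ is a compact subset of the interior of $\ttW$, and hence $K \subset \ttW \setminus [0, \delta) \times \partial \ttW$ for some $\delta \in (0, 2\epsilon)$. I construct $Q = Q_z \in \dDiff(\ttW, \partial)$ that equals the identity outside the $2\epsilon$-collar and is given on the collar by $Q(t, x) = (\eta(t), x)$, where $\eta \colon [0, 1) \to [0, 1)$ is a smooth increasing diffeomorphism with $\eta|_{[0, \delta/2]} = \mathrm{id}$, $\eta|_{[\delta, 2\epsilon]} = \lambda_\epsilon|_{[\delta, 2\epsilon]}$, and $\eta|_{[2\epsilon, 1)} = \mathrm{id}$. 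Such $\eta$ is obtained by smooth monotone interpolation on the short transition interval $[\delta/2, \delta]$, matching $\lambda_\epsilon$ to infinite order at the endpoints. By construction $Q$ agrees with $p_\epsilon$ on an open neighborhood of $K$.

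A direct case check then shows $Q f Q^{-1} = c_\epsilon(f)$ for every $f$ with $\mathrm{supp}(f) \subset K$: on the $\epsilon$-collar both sides are the identity, since $Q^{-1}$ carries the $\epsilon$-collar into $[0, \delta) \times \partial \ttW$, which does not meet $K$; and on the complement of the $\epsilon$-collar the equality reduces to $Q = p_\epsilon$ on $K$ (whence $Q^{-1} = p_\epsilon^{-1}$ on $Q(K) = p_\epsilon(K)$). Applied termwise, this gives the chain-level identity $c_\epsilon(z) = Q z Q^{-1}$ in the bar complex, and since conjugation by the element $Q \in \dDiff(\ttW, \partial)$ is an inner automorphism it acts trivially on $H_*$, so $[c_\epsilon(z)] = [z]$ in $H_*(\dDiff(\ttW, \partial); \bZ)$. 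The only genuinely technical point is the construction of the interpolating diffeomorphism $\eta$, which is routine via a bump-function argument since the average slope $2\epsilon/\delta$ on the transition interval is positive and compatible with the boundary derivatives $\eta'(\delta/2) = 1$ and $\eta'(\delta) = \lambda_\epsilon'(\delta) = 1/2$.
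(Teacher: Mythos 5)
Your proof is correct and follows essentially the same strategy as the paper: reduce to showing that on any finite set of group elements, $c_\epsilon$ agrees with conjugation by a single element of $\dDiff(\ttW,\partial)$ (the paper quotes this as \cite[Lemma~3.7]{mcduff1980homology}; you rederive the chain-level consequence directly, which amounts to the same thing). The one substantive difference is in the construction of the conjugating diffeomorphism. The paper's $h$ is built from a function $\gamma$ that is \emph{linear} with slope $\frac{2\epsilon+\delta}{2\delta}\neq 1$ near $t=0$, so the resulting $h$ fixes $\partial\ttW$ pointwise but is not the identity on a neighborhood of it --- hence, as written, $h$ does not actually lie in $\dDiff(\ttW,\partial)$ (whose elements are required to be supported away from the boundary), which is what McDuff's lemma needs. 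Your $\eta$, by contrast, is the identity on $[0,\delta/2]$, so $Q$ genuinely lies in $\dDiff(\ttW,\partial)$; this quietly repairs a small gap in the paper's argument. Everything else --- choosing $\delta$ so small that all relevant supports avoid the $\delta$-collar, interpolating between the identity and $\lambda_\epsilon$, and the case-by-case verification of $QfQ^{-1}=c_\epsilon(f)$ --- matches the paper.
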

\begin{proof}
 In order to show that $c_{\epsilon}$ induces the identity on homology, we invoke a lemma from \cite[Lemma 3.7]{mcduff1980homology}. In this lemma, McDuff showed if $K$ is a discrete group and $c$ is an endomorphism of $K$ such that the restriction of $c$ to any finite subset of $K$ is  a conjugation map by some element of the group that may depend on the finite subset that $c$ is restricted to, then $c$ acts as the identity on the group homology. Hence, to prove $c_{\epsilon}$ acts as the identity on homology, it suffices to prove that for any finite set of elements $\{ f_1,f_2,\dots, f_n\}$ in $\dDiff(\tW,\partial)$, there is a group element $h$ such that for all $1\leq i\leq n$, we have $h(f_i)h^{-1}=c_{\epsilon}(f_i)$. We choose a  positive $\delta\leq \epsilon$ such that the $\delta$-collar is fixed by all $f_i$'s; such $\delta$ exists because every element in $\dDiff(\tW,\partial)$ is fixing a neighborhood of the boundary by definition. We define a diffeomorphism $h\in \dDiff(\tW,\partial)$ that maps the $\delta$-collar diffeomorphically to the $(\epsilon+\delta/2)$-collar and maps the complement of $\delta$-collar by the embedding $p_{\epsilon}$. Let $\gamma: [0,1]\to [0,1]$ be a diffeomorphism satisfying $\gamma(t)=\frac{2\epsilon+\delta}{2\delta}\cdot t$ for $t\leq 0.9\delta$ and $\gamma(t)=\lambda_{\epsilon}(t)$ for $t\geq \delta$.
 \begin{figure}[ht]
\begin{tikzpicture}
\draw [<->] (0,3) -- (0,0) -- (5,0);
\draw  (1,1) -- (2.4,1.57);
\draw  (3,2) -- (4,3);
\draw (0,0)--(0.7,0.8);
\draw (0.7,0.8) to [out=46, in =200] (1,1);
\draw  (2.4,1.57) to [out=24,in=-135] (3,2);
\draw [dotted] (0.7,0.8)--(0.7,0);
\draw [dotted] (1,1)--(0,1);
\node [left] at (0,3) {$\gamma(t)$};
\node [below] at (5,0) {$t$};
\node [below] at (0.7,0) {$0.9\delta$};
\node [left] at (0,1) {$\frac{2\epsilon+\delta}{2}$};
\end{tikzpicture}
\caption{ The graph of $\gamma(t)$.}
\end{figure}
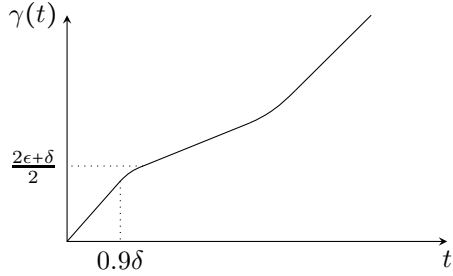
 
  We define the diffeomorphism $h$ to be
  \[
 h(x):=\begin{cases}
 (\gamma(t),y) & \text{if } x=(t,y)\text{ is in }\delta\text{-collar}\\p_{\epsilon}(x) & \text{otherwise.}
 \end{cases}
 \]
  It is straightforward to check that for the diffeomorphism $h$,  we have $h(f_i)h^{-1}=c_{\epsilon}(f_i)$ for $i=1,\dots,n$.
\end{proof}
\begin{cor}\label{collar}
The natural injection of $\beta: \dDiff(\ttW,\text{rel $\epsilon$-collar})\hookrightarrow \dDiff(\ttW,\partial)$ induces an isomorphism on homology.
\end{cor}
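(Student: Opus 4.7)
My plan is to deduce the corollary from \Cref{technical} by factoring the pushing collar map $c_\epsilon$ as $c_\epsilon = \beta \circ C$, where $C \colon \dDiff(\ttW,\partial) \to \dDiff(\ttW,\text{rel $\epsilon$-collar})$ is a group isomorphism. Once this factorization is in place, the corollary follows from a one-line diagram chase: since $C_*$ is an isomorphism (being induced by a group isomorphism) and $(\beta \circ C)_* = (c_\epsilon)_* = \mathrm{id}$ on $H_*(\dDiff(\ttW,\partial))$ by \Cref{technical}, we conclude that $\beta_* = (C_*)^{-1}$ is an isomorphism.

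The map $C$ would be defined by the very same formula as $c_\epsilon$, now regarded with codomain the subgroup: set $C(f)(x) = x$ on the $\epsilon$-collar and $C(f)(x) = p_\epsilon(f(p_\epsilon^{-1}(x)))$ on its complement. Since $p_\epsilon \colon \ttW \hookrightarrow \ttW$ is a self-embedding whose image is precisely $\ttW \setminus \mathrm{int}(\epsilon\text{-collar})$, the formula produces a diffeomorphism of $\ttW$ that is the identity on the $\epsilon$-collar, and therefore lies in $\dDiff(\ttW,\text{rel $\epsilon$-collar})$. The factorization $\beta \circ C = c_\epsilon$ is then immediate, and $C$ is a group homomorphism because $p_\epsilon^{-1} \circ p_\epsilon$ is the identity of $\ttW$.

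To see that $C$ is a group isomorphism, I would exhibit its inverse as $\tilde f \mapsto p_\epsilon^{-1} \circ \tilde f \circ p_\epsilon$. This assignment is well-defined because any $\tilde f \in \dDiff(\ttW,\text{rel $\epsilon$-collar})$ fixes the $\epsilon$-collar pointwise, hence preserves its complement, which is precisely the image of $p_\epsilon$; consequently $\tilde f \circ p_\epsilon$ lands in the image of $p_\epsilon$ and can legitimately be composed with $p_\epsilon^{-1}$. A direct computation then shows this formula is a two-sided inverse to $C$.

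The main technical obstacle I expect is the smoothness check at the seam $\{\epsilon\} \times \partial \ttW$: one must verify that $C(f)$ and its inverse counterpart, each given by two different formulas on opposite sides of the seam, are genuinely $C^\infty$ across it. This reduces to the smoothness of $\lambda_\epsilon$ together with the fact that elements of $\dDiff(\ttW,\partial)$ are the identity in a neighborhood of $\partial\ttW$, so that $C(f)$ is actually the identity in a neighborhood of the seam from the complementary side as well, matching the identity imposed on the $\epsilon$-collar. Once this is settled, the homology statement drops out immediately from \Cref{technical}.
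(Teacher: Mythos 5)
Your proof is correct, and it takes a genuinely cleaner route than the paper's. The paper also factors $c_\epsilon = \beta \circ c'_\epsilon$, but it never observes that $c'_\epsilon$ (your $C$) is actually a group \emph{isomorphism}; instead, after concluding surjectivity of $\beta_*$ from $(\beta\circ c'_\epsilon)_*=\mathrm{id}$, it runs a second, separate McDuff-style conjugation argument (``similar to \Cref{technical}'') applied to the composition $c'_\epsilon\circ\beta$ inside the smaller group $\dDiff(\tW,\text{rel }\epsilon\text{-collar})$ in order to get injectivity. Your observation --- that $p_\epsilon$ is a diffeomorphism of $\tW$ onto $\tW\setminus\mathrm{int}(\epsilon\text{-collar})$, so that conjugation by $p_\epsilon$ followed by extension-by-the-identity is an isomorphism of discrete groups $\dDiff(\tW,\partial)\to\dDiff(\tW,\text{rel }\epsilon\text{-collar})$ with the explicit two-sided inverse $\tilde f\mapsto p_\epsilon^{-1}\tilde f p_\epsilon$ --- makes the injectivity half come for free and spares the reader a second appeal to the McDuff lemma. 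The smoothness worry at the seam resolves exactly as you suggest: since any $f\in\dDiff(\tW,\partial)$ is the identity on some $\delta$-collar, $C(f)$ is the identity on a two-sided neighborhood of $\{\epsilon\}\times\partial\tW$, so the two formulas agree on an open overlap; and $C^{-1}(\tilde f)=p_\epsilon^{-1}\circ\tilde f|_{\tW^\epsilon}\circ p_\epsilon$ is a composition of smooth maps with no seam at all, landing in $\dDiff(\tW,\partial)$ because $\tilde f$ fixes a neighborhood of the $\epsilon$-collar. In short, both proofs invoke \Cref{technical} once; yours replaces the paper's second conjugation argument with the purely formal fact that $C_*$ is invertible, which is the more economical argument.
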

\begin{rem}
The same statement is also true for volume preserving diffeomorphisms \cite[Theorem 1]{MR699012}, but the proof in that case is much harder.
\end{rem}
\begin{proof} We have to apply \Cref{technical} twice as follows. Since the image of the homomorphism $c_{\epsilon}$ lies in $\dDiff(\tW,\text{rel $\epsilon$-collar})$, we write $c_{\epsilon}$ as a composition $\beta\circ c'_{\epsilon}$ where $c'_{\epsilon}$ is the same map as $c_{\epsilon}$ but with restricted codomain
  \[
  \begin{tikzpicture}[node distance=2.2cm, auto]
   \node (A) {$\dDiff(\tW,\partial)$};
   \node (B) [right of=A, node distance=3.2cm]{$\dDiff(\tW,\text{rel $\epsilon$-collar})$};
   \node (C) [right of=B, node distance=3.2cm]{$\dDiff(\tW,\partial).$};
   \draw[->] (A) to node {$c'_{\epsilon}$} (B);
   \draw[->] (B) to node {$\beta$} (C);
  \end{tikzpicture}
 \]
  \Cref{technical} implies that $\beta$ induces surjection on homology. To prove that it also induces injection on homology, we consider the composition  $c'_{\epsilon}\circ \beta$
   \[
  \begin{tikzpicture}[node distance=2.2cm, auto]
   \node (A) {$\dDiff(\tW,\text{rel $\epsilon$-collar})$};
   \node (B) [right of=A, node distance=3.2cm]{$\dDiff(\tW,\partial)$};
   \node (C) [right of=B, node distance=3.2cm]{$\dDiff(\tW,\text{rel $\epsilon$-collar}).$};
   \draw[->] (A) to node {$\beta$} (B);
   \draw[->] (B) to node {$c'_{\epsilon}$} (C);
  \end{tikzpicture}
 \] 
 Similar to \Cref{technical}, we can show $c'_{\epsilon}\circ \beta$ acts as the identity on homology of $\dDiff(\tW,\text{rel $\epsilon$-collar})$, hence $\beta$ induces also an injective map on homology.
\end{proof}

The difference between standard and nonstandard stabilization maps is that the standard stabilization maps  embed $\partial\W$ into   $\tW_{g+1,1}\backslash \partial$ but nonstandard stabilization maps send part of the boundary $\partial\W$ to the boundary of $\tW_{g+1,1}$. The method we use to prove homological stability for $\dDiff(\W,\partial)$ works better if we use nonstandard stabilization maps. The next theorem roughly says if we have homological stability induced by nonstandard stabilization maps, we also have homological stability induced by standard stabilization maps.

 \begin{figure}[ht]\label{fig2}
 \begin{tikzpicture}[scale=0.2]
  \begin{scope}[shift={(11,0)}]
\draw [line width=1.05pt] (-1.5,2.5)--(13,2.5);
\draw [line width=1.05pt] (13,-2.5) arc (-90:90:2.5);
\draw [line width=1.05pt] (-1.5,-2.5)--(13,-2.5);
\draw [line width=1.1pt] (4.6,.5) arc (240:300:2.5 and 6.75);
\draw   [line width=1.7pt] (6.87, 0.13) arc (48:150:1.2 and 0.9);
\begin{scope}[shift={(-3,0)}]
\draw [line width=1.05pt] (4.6,.5) arc (240:300:2.5 and 6.75);
\draw   [line width=1.7pt] (6.87, 0.13) arc (48:150:1.2 and 0.9);
\end{scope}
\begin{scope}[shift={(6,0)}]
\draw [line width=1.05pt] (4.6,.5) arc (240:300:2.5 and 6.75);
\draw   [line width=1.7pt] (6.87, 0.13) arc (48:150:1.2 and 0.9);
\end{scope}
\node at  (8,0) {$\bullet$};
\node at (9,0) {$\bullet$};
\node at (10,0) {$\bullet$};
\end{scope}
\begin{scope}[shift={(-16,0)}]
\draw [line width=1.05pt] [dashed] (-1.5,0) ellipse (0.5 and 2.5);
\draw[line width=1.05pt] [dashed] (0,.-2.5) arc (-90:90:0.5 and 2.5);
\draw [line width=1.05pt] (0,2.5) arc (90:270:0.5 and 2.5);
\draw [line width=1.05pt] (0,2.5)--(13,2.5);
\draw [line width=1.05pt][dashed] (-1.5,2.5)--(0,2.5);
\draw [line width=1.05pt] (13,-2.5) arc (-90:90:2.5);
\draw [line width=1.05pt] (0,-2.5)--(13,-2.5);
\draw [line width=1.05pt][dashed] (-1.5,-2.5)--(0,-2.5);
\draw [line width=1.1pt] (4.6,.5) arc (240:300:2.5 and 6.75);
\draw   [line width=1.7pt] (6.87, 0.13) arc (48:150:1.2 and 0.9);
\begin{scope}[shift={(-3,0)}]
\draw [line width=1.05pt] (4.6,.5) arc (240:300:2.5 and 6.75);
\draw   [line width=1.7pt] (6.87, 0.13) arc (48:150:1.2 and 0.9);
\end{scope}
\begin{scope}[shift={(6,0)}]
\draw [line width=1.05pt] (4.6,.5) arc (240:300:2.5 and 6.75);
\draw   [line width=1.7pt] (6.87, 0.13) arc (48:150:1.2 and 0.9);
\end{scope}
\node at  (8,0) {$\bullet$};
\node at (9,0) {$\bullet$};
\node at (10,0) {$\bullet$};
\end{scope}
\node at (-10,-4.1) {$\W\backslash\epsilon\text{-collar}$};
\node at (13,-4.1) {$\tW_{g+1,1}$};
\draw  [->, line width=1.05pt] (.5,0)--(4,0);
\node at (-24,-5.6) {$f:$};
\begin{scope}[shift={(-8.5,-9)}]
\draw [line width=1.05pt] (-0.5,2.5) arc (90:270:2.5);
\draw [line width=1.05pt] (3,2.5)--(5,2.5);
\draw [line width=1.05pt] (2.5,1.1)--(4.6,1.1);
\draw [line width=1.05pt] (-0.5,2.5)--(3,2.5);
\draw [line width=1.05pt] (-0.5,-2.5)--(3,-2.5);
\draw [line width=1.05pt] (5,2.5)--(6.5,2.5);
\draw [line width=1.05pt] (5,-2.5)--(6.5,-2.5);
\draw [line width=1.05pt] (6.5,0) ellipse (0.5 and 2.5);
\draw [line width=1.05pt] (5,2.5) arc (90:270:0.5 and 2.5);
\draw [line width=1.05pt][dashed] (5,2.5) arc (90:-90:0.5 and 2.5);
\draw [line width=1.05pt] (3,2.5) arc (90:270:0.5 and 2.5);
\draw [line width=1.05pt][dashed] (3,2.5) arc (90:23:0.5 and 2.5);
\draw [line width=1.05pt] (3,-2.5) arc (-90:20:0.5 and 2.5);
\begin{scope}[shift={(-5.6,0)}]
\draw [line width=1.05pt] (4.6,.5) arc (240:300:2.5 and 6.75);
\draw   [line width=1.7pt] (6.87, 0.13) arc (48:150:1.2 and 0.9);
\end{scope}
\draw  [->, line width=1.05pt] (8,0)--(12,0);
\node at (1.6,-4) { \small$\tW_{1,1}\natural\epsilon\text{-collar} $};
\end{scope}
\begin{scope}[shift={(3.2,-9)}]
 \draw [line width=1.05pt] (7.6,-2.5) to [out=145, in= 250] (6.5, 2.5);
 \draw [line width=1.05pt] (6.5,2.5) to [out=340, in= 70] (7.6, -2.5);
 \draw [line width=1.05pt] (2,2.5)--(6.5,2.5);
\draw [line width=1.05pt] (2,-2.5)--(7.6,-2.5);
\draw [line width=1.05pt] (2,2.5) arc (90:270:0.5 and 2.5);
\draw [line width=1.05pt][dashed] (2,2.5) arc (90:-90:0.5 and 2.5);
\begin{scope}[shift={(-1.7,0)}]
\draw [line width=1.05pt] (4.6,.5) arc (240:300:2.5 and 6.75);
\draw   [line width=1.7pt] (6.87, 0.13) arc (48:150:1.2 and 0.9);
\end{scope}
\end{scope}
\begin{scope}[shift={(3.2,0)}]
 \draw [line width=1.05pt][dashed] (7.6,-2.5) to [out=145, in= 250] (6.5, 2.5);
 \draw [line width=1.05pt][dashed] (6.5,2.5) to [out=340, in= 70] (7.6, -2.5);
 \draw [line width=1.05pt] (2,2.5)--(6.5,2.5);
\draw [line width=1.05pt] (2,-2.5)--(7.6,-2.5);
\draw [line width=1.05pt] (2,2.5) arc (90:270:0.5 and 2.5);
\draw [line width=1.05pt][dashed] (2,2.5) arc (90:-90:0.5 and 2.5);
\begin{scope}[shift={(-1.7,0)}]
\draw [line width=1.05pt] (4.6,.5) arc (240:300:2.5 and 6.75);
\draw   [line width=1.7pt] (6.87, 0.13) arc (48:150:1.2 and 0.9);
\end{scope}
\end{scope}

\end{tikzpicture}
\caption{The diffeomorphism $f$ maps $\tW_{1,1}\natural \epsilon\text{-collar}$ to a submanifold of $\tW_{g+1,1}$ which is diffeomorphic to $\tW_{1,2}$.}
\end{figure}
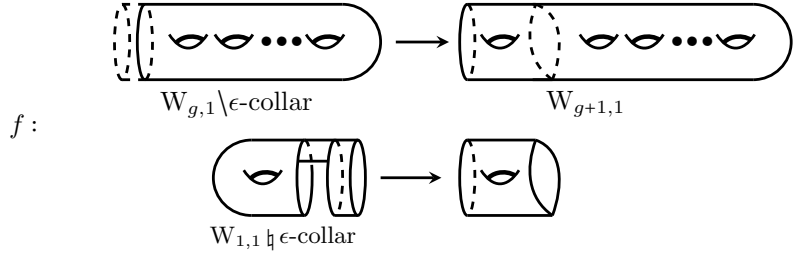

 \begin{thm}\label{StabilizationMap}
 Suppose for an integer $k$, the map induced on homology by a {\it nonstandard} stabilization map 
   \[
  \begin{tikzpicture}[node distance=3.9cm, auto]
   \node (A) {${ns_f}_*: H_k(\dDiff(\WW,\partial);\bZ)$};
   \node (B) [right of=A, node distance=4.9cm]{$H_k(\dDiff(\ttW_{g+1,1},\partial);\bZ)$};
   \draw[->] (A) to node {$$} (B);
  \end{tikzpicture}
 \]
is an isomorphism (a surjection), then the induced map on homology by any {\it standard} stabilization map 
   \[
  \begin{tikzpicture}[node distance=3.2cm, auto]
   \node (A) {${s_j}_*: H_k(\dDiff(\WW,\partial);\bZ)$};
   \node (B) [right of=A, node distance=4.9cm]{$H_k(\dDiff(\ttW_{g+1,1},\partial);\bZ)$};
   \draw[->] (A) to node {$$} (B);
  \end{tikzpicture}
 \]
 is also an isomorphism (a surjection). 
\end{thm}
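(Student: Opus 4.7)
The plan is to show that, once restricted to the subgroup of diffeomorphisms supported away from an $\epsilon$-collar of $\partial \ttW_{g,1}$, the standard and nonstandard stabilization maps become conjugate in $\dDiff(\ttW_{g+1,1},\partial)$. Combined with \Cref{collar}, which tells us this subgroup has the same homology as the full diffeomorphism group, this will force $(s_j)_*$ and $(ns_f)_*$ to coincide on $H_*(\BdDiff(\ttW_{g,1},\partial);\bZ)$ in every degree, from which the theorem follows at once.

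First I would set $M := \ttW_{g,1} \setminus \epsilon\text{-collar}$ (which is abstractly diffeomorphic to $\ttW_{g,1}$) and consider both stabilization maps composed with the inclusion $\beta : \dDiff(\ttW_{g,1},\text{rel $\epsilon$-collar}) \hookrightarrow \dDiff(\ttW_{g,1},\partial)$. For $\phi$ in the smaller subgroup, $s_j(\phi)$ is supported inside $j(M) \subset \ttW_{g+1,1}$, whose complement is the standard copy of $\ttW_{1,2}$ together with an attached collar, still diffeomorphic to $\ttW_{1,2}$ since attaching a collar along a boundary component does not change the diffeomorphism type. Similarly, $ns_f(\phi)$ is supported inside $f(M)$, whose complement is $f(\ttW_{1,1} \natural \epsilon\text{-collar})$, also diffeomorphic to $\ttW_{1,2}$ as recorded in the caption of Figure~\ref{fig2}.

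Next I would invoke the uniqueness, up to ambient diffeomorphism of $\ttW_{g+1,1}$ fixing the boundary, of an embedding of $\ttW_{g,1}$ into $\ttW_{g+1,1}$ whose complement is diffeomorphic to $\ttW_{1,2}$; this is the same folklore fact that makes the standard stabilization map well-defined up to conjugation in the introduction. Applied to $j|_M$ and $f|_M$, it produces $\psi \in \dDiff(\ttW_{g+1,1},\partial)$ with $\psi \circ j|_M = f|_M$, and a direct check gives $\psi \circ s_j(\phi) \circ \psi^{-1} = ns_f(\phi)$ for every $\phi$ in $\dDiff(\ttW_{g,1},\text{rel $\epsilon$-collar})$. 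Hence the two restricted homomorphisms are conjugate in $\dDiff(\ttW_{g+1,1},\partial)$ and induce identical maps on group homology.

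Finally, \Cref{collar} states that $\beta_*$ is an isomorphism, so from $(s_j)_* \circ \beta_* = (ns_f)_* \circ \beta_*$ one cancels $\beta_*$ to conclude $(s_j)_* = (ns_f)_*$ on $H_k(\BdDiff(\ttW_{g,1},\partial);\bZ)$ for every $k$; if one of the two is a surjection or an isomorphism in a given degree, so is the other. The only delicate point is the uniqueness-of-embedding statement in the boundary-fixing category: one must verify that both complements contain a collar of $\partial\ttW_{g+1,1}$, so that the diffeomorphism witnessing the uniqueness can be arranged to equal the identity in a neighborhood of the boundary. This is the same observation that underlies the well-definedness of $s_j$ up to conjugation, and is where the bulk of the geometric content of the theorem sits.
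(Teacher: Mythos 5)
Your proof is correct, but it routes the argument differently from the paper's. The paper fixes the diffeomorphism $f$ and \emph{explicitly constructs} a standard stabilization $j = f\circ p_\epsilon$ for which the identity $s_j = ns_f\circ c_\epsilon$ holds on the nose as a group homomorphism; \Cref{technical} then says $c_\epsilon$ acts as the identity on homology, and the case of a general $j$ is handled up front by noting that all standard stabilization maps are conjugate. You instead keep $j$ arbitrary, precompose both $s_j$ and $ns_f$ with the inclusion $\beta$ of the collar-fixing subgroup, show the two restricted homomorphisms are conjugate in $\dDiff(\ttW_{g+1,1},\partial)$ by appealing to the uniqueness of an embedded $\ttW_{g,1}$ in $\ttW_{g+1,1}$ with complement $\ttW_{1,2}$ up to a boundary-fixing ambient diffeomorphism matching parametrizations, and then use \Cref{collar} to cancel $\beta_*$. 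Both routes ultimately consume the same two ingredients --- McDuff's conjugation lemma (packaged as \Cref{technical} or equivalently \Cref{collar}) and the uniqueness-of-embedding principle inherited from Kreck and Galatius--Randal-Williams that underlies the well-definedness of $s_j$ --- but the paper replaces your conjugacy argument by the direct algebraic check that $s_{f\circ p_\epsilon} = ns_f\circ c_\epsilon$, thereby applying the uniqueness principle only to pairs of standard stabilization maps rather than to the pair $j|_M, f|_M$. The one point you rightly flag as delicate is genuine: the uniqueness result must be invoked in a form that yields $\psi\in\dDiff(\ttW_{g+1,1},\partial)$ with $\psi\circ j|_M = f|_M$ \emph{exactly}, i.e.\ matching parametrizations and not merely images, and with $\psi$ supported away from $\partial\ttW_{g+1,1}$. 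This is exactly what the boundary-fixing isotopy extension theorem gives once both complements are seen to contain a full collar of $\partial\ttW_{g+1,1}$, which you verify; but since the paper never isolates that statement as a lemma, in a self-contained write-up you would want to record a precise citation (e.g.\ to the isotopy and cancellation results in \cite{Galatius-Randal-Williams} or Kreck's work) rather than leaving it as folklore.
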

\begin{proof} 
Since different standard stabilization maps are conjugate, we only need to prove the claim for one standard stabilization map. Let $f: \W\natural \tW_{1,1}\to \tW_{g+1,1}$ be the diffeomorphism to which the nonstandard stabilization map $ns_f$ is associated. We want to find an embedding $j: \W \hookrightarrow \tW_{g+1,1}\backslash \partial$ so that the associated standard stabilization map $s_j$ makes the following diagram commutative 
 \[
 \begin{tikzpicture}[node distance=2cm, auto]
  \node (A) {$\dDiff(\W,\partial)$};
  \node (B) [right of=A, below of=A, node distance=1.9cm]{$\dDiff(\W,\partial).$};
  \node (C) [right of=A, node distance=4cm]{$ \dDiff(\tW_{g+1,1},\partial)$};
  \draw [->] (A) to node {$s_j$} (C);
  \draw [->] (B) to node {$ns_f$} (C);
  \draw [->] (A) to node {$c_{\epsilon}$} (B);
 \end{tikzpicture}
\]
Note that $ns_f\circ c_{\epsilon}$ is given by the following composition
  \[
\dDiff(\W,\partial)\xrightarrow{c'_{\epsilon}}\dDiff(\W,\text{rel $\epsilon$-collar})\rightarrow \dDiff(\W\natural \tW_{1,1},\partial)\xrightarrow{\cong} \dDiff(\tW_{g+1,1},\partial)
 \]
 where the last isomorphism is induced by $f$. The image of $ns_f\circ c_{\epsilon}$ is supported away from $f(\tW_{1,1}\natural \partial\W\times [0,\epsilon])$ which is diffeomorphic to $\tW_{1,2}$ (see Figure 4). If we define $j$ to be $f\circ p_{\epsilon}$, then we have $s_j=ns_f\circ c_{\epsilon}$. By \Cref{technical}, the map $c_{\epsilon}$ induces the identity on homology, hence $s_j=ns_f\circ c_{\epsilon}$ induces an isomorphism (a surjection) on $k$-th homology if the same holds for $ns_f$.
 \end{proof}

\section{A simplicial resolution of $\BdDiff(\text{W}_{g,1},\partial)$}\label{resolution}

 In this section, we construct a semi-simplicial resolution for $\BdDiff(\text{W}_{g,1},\partial)$. To recall the  definition of semi-simplicial resolution, let $\Delta$ denote the category whose objects are non-empty totally ordered finite sets and whose morphisms are monotone maps. Let $\Delta_{\text{inj}}$ be the full subcategory of $\Delta$ with the same objects but only the injective maps as morphisms. A $\textit{semi-simplicial object}$ in a category $\mathcal{C}$ is a contravariant functor from $\Delta_{\text{inj}}$ to $\mathcal{C}$. For our purpose $\mathcal{C}$ is either the category of sets or the category of topological spaces. More concretely, we denote a semi-simplicial set (space) by $X_{\bullet} = \{ X_n |\  n=0,1,\dots\}$, which is a collection of sets (spaces) for each $n\geq 0$ and face maps $d_i:X_n\rightarrow X_{n-1}$ defined for $i=0,1,\dots, n$ that satisfy the usual identities $d_id_j=d_{j-1}d_i$ for $i<j$. Let $\Delta^n$ be the standard $n$-dimensional simplex and $d^i:\Delta^n\rightarrow \Delta^{n+1}$ be the inclusion of the $i$-th face where $i=0,\dots, n$. The geometric realization of a semi-simplicial set (space) $X_{\bullet}$ is 
  \[
|X_{\bullet}|  =\coprod_{n\geq 0}X_n\times \Delta^n/\sim
\]
where the equivalence relation is $(d_i(x),y)\sim(x,d^i(y))$. An augmented semi-simplicial space $X_{\bullet}\rightarrow X_{-1}$ is a semi-simplicial space $X_{\bullet}$ with a map $\epsilon:X_0\rightarrow X_{-1}$ called the augmentation map, which coequalizes  the face maps $d_0: X_1\rightarrow X_{0}$ and $d_1: X_1\rightarrow X_{0}$. The augmentation induces a map $|X_{\bullet}|\rightarrow X_{-1}$(see \cite[Section 2]{randal2009resolutions} for more details). In this section, we describe an augmented semi-simplicial space $X_{\bullet}\rightarrow  \BdDiff(\text{W}_{g,1},\partial)$ such that  the map induced by the augmentation $|X_{\bullet}|\rightarrow \BdDiff(\text{W}_{g,1},\partial)$ is $\left \lfloor (g-3)/{2}\right \rfloor $-connected.

  Let $H$ be the closure of the complement of $\tW_{g,1}$ in $\W\natural \tW_{1,1}$. We want to fix a submanifold  $[0,1]\times D^{2n-1} $ in $H$ that is used in the boundary connected sum. Note that $H$ can be  obtained from $\tW_{1,1}$ by gluing $[0,1]\times D^{2n-1}$ onto $\partial \tW_{1,1}$ along an orientation preserving embedding
 \[ e:\{ 1\} \times D^{2n-1} \rightarrow \partial \text{W}_{1,1} \]
 that we  choose once and for all. The point of  working with $H$ is, although it is diffeomorphic to $\text{W}_{1,1}$ after smoothing corners, it has a standard embedded  $[0,1]\times D^{2n-1} \subset H$. Assume that $( S^n, *)$ is a sphere with a chosen base point, then $ S^n\vee S^n=(S^n \times \{*\}) \cup(\{*\} \times S^n) \subset S^n\times S^n$. The manifold $\tW_{1,1}$ is the result of removing an open ball from $S^n\times S^n$. We assume this open ball is chosen so that $S^n\vee S^n$ is contained in $\text{int}(\text{W}_{1,1})$.
  \begin{defn}
  Let $\gamma$ be a path in $\text{int}(H)$ from $(0,0)\in [0,1]\times D^{2n-1}$ to some chosen point on $S^n\vee S^n$ that is not $(*,*)\in S^n\times S^n$ such that the interior of $\gamma$ in $H$ does not intersect  $S^n\vee S^n$ and the image of $\gamma$ agrees with $[0,1]\times \{0\}$ inside $[0,1]\times D^{2n-1}$. We define the \textit{core} $C\subset H$ to be:
  \[  C=(S^n\vee S^n)\cup \gamma([0,1])\subset H \]
 The core in $H$ is depicted in \Cref{core}.

  \end{defn}

  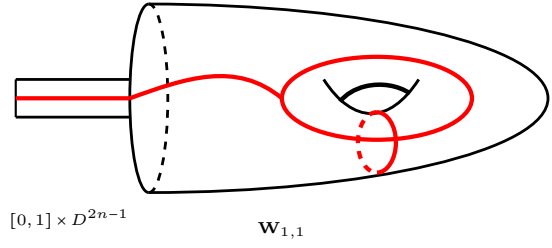
\begin{figure}[ht]
 \
\begin{tikzpicture}[scale=.5]

\begin{scope}[shift={(6,0)}]
\draw[line width=1.05pt] [dashed] (0,.-2.5) arc (-90:90:0.5 and 2.5);
\draw [line width=1.05pt] (0,2.5) arc (90:270:0.5 and 2.5);
\draw [line width=1.05pt] (0,0)+(0,-2.5) arc (-90:90:10.5 and 2.5);
\draw [line width=1.1pt] (4.6,.5) arc (240:300:2.5 and 6.75);
\draw   [line width=1.7pt] (6.87, 0.13) arc (48:150:1.2 and 0.9);
\draw  [red, ultra thick](6,-0.37) arc (90:-90: 0.5 and .8);
\draw  [red, ultra thick, dashed](6,-0.37) arc (90:270: 0.5 and .8);
\draw  [red, ultra thick] (6,0) ellipse (2.5 and 1.1);
\draw  [red, ultra thick] (-0.5,0) to [out= 20, in=140] (3.5,0);
\draw [line width=1.05pt] (-0.48,0.5)--(-3.5,0.5);
\draw [line width=1.05pt] (-0.48,-0.5)--(-3.5,-0.5);
\draw [ thick] (-3.5,-0.5)--(-3.5,0.5);
\draw [red,ultra thick] (-0.5,0)--(-3.5,0);
\node at (-2.1,-3.2) {\tiny\bf{$[0,1]\times D^{2n-1}$}};
\node at (3.5,-3.5) {\tiny\bf{$\text{W}_{1,1}$}};
\end{scope}
 
\end{tikzpicture}  
  \caption{ \textcolor{red}{Core} in $H$}
    \label{core}

 \end{figure}

 We make our choice of $\gamma$ once and for all to have a fixed  \textit{core} $C$ in $H$. Note that $H$ is homotopy equivalent to its core. We want to consider those embeddings of  $H$ into a manifold $\tW$ that map $\{ 0\}\times D^{2n-1}$  to $\partial \tW$ and send the rest of $H$ to the interior of $\tW$.  Similarly an embedding of a neighborhood $U$ of the core $C$ into $\tW$ has to send $U\cap\{ 0\}\times D^{2n-1}$ to $\partial \tW$ and send the rest of $U$ to the interior of $\tW$. 
 \begin{defn} A germ of an embedding of the core $C\subset H$ into $ \text{W}$ is an equivalence class of the pair, $[\phi]:=(U,\phi)$, where $U$ is a neighborhood of the core $C\subset H$ and $ \phi: U\hookrightarrow \text{W}$ is a smooth embedding of $U$ into  $\text{W}$. We say $(U,\phi)$ is equivalent to $(U',\phi')$ if and only if  there exists an open neighborhood of the core $U''\subset U\cap U'$, such that $\phi|_{U''}=\phi'|_{U''}$.
\end{defn}
To define the semisimplicial resolution for $\BdDiff(\tW,\partial)$, we first use embeddings of the core $C$ into $\tW$ to define a simplicial complex on which $\dDiff(\tW,\partial)$ acts. Since diffeomorphisms in $\dDiff(\tW,\partial)$ are identity near the boundary, we only consider those embeddings of the core into $\text{W}$ that are ``horizontal"  near the boundary (see \Cref{collared} bellow). Furthermore, since nonstandard stabilization maps are defined by  the boundary connected sum, we want to fix part of the boundary that is not affected by the boundary connect sum. Hence we fix the data of an embedding $c:\bH=\bR_+\times\bR^{2n-1}\hookrightarrow \text{W}$ where $c^{-1}(\partial \tW)=\partial\bH$, even if we don't write $c$ along with $\tW$, we assume that we have  chosen the chart $c$ once and for all  to define the resolution for $\BdDiff(\text{W},\partial)$. 

Let $(\text{W},c)$ be a pair where $\text{W}$ is a manifold with a nonempty boundary and $c$ is the fixed chart on the boundary.  Let $\text{B}_{r}(0)$ denote the open ball of radius $r$ around the origin in $\bR^{2n-1}$. For any neighborhood $U$ of $C\subset H$,  there exits a small enough $r$ such that $[0,1]\times \text{B}_{r}(0)\subset U$, where $[0,1]\times \text{B}_{r}(0)$ is contained in the standard embedded  $[0,1]\times D^{2n-1} \subset H$.

  \begin{defn}\label{collared}
   A germ $ [\phi]$ of an embedding of the core into $\text{W}$, is called \emph{collared} embedding if for some $\epsilon,\eta>0$ and $(U,\phi)$ a representative of the germ $[\phi]$,  the restriction of $\phi$ to $[0,1]\times \text{B}_{\eta}(0)\subset U$  satisfies
\[ c^{-1}(\phi(x,p))=(x,p+t_{\phi})\]
  for all $p\in \text{B}_{\eta}(0)$ and $x<\epsilon$. Here, $t_{\phi}$ is the point $c^{-1}(\phi(0,0))\in \{0 \}\times \bR^{2n-1}$.
 \end{defn}
  
\begin{defn}  For a pair $(\text{W},c)$, we define a simplicial complex $\text{E}(\text{W})$ whose $p$ simplices consists of unordered set of \emph{collared} embeddings $\{[\phi_0],  [\phi_1],\dots,[ \phi_p]\}$ satisfying $\phi_i(C)\cap \phi_j(C)=\varnothing$  for all distinct $i,j$. We associate a semisimplicial set denoted by $\text{E}_{\bullet}(\text{W})$ to $\text{E}(\text{W})$ by  imposing all orderings on the simplices i.e. $\text{E}_{p}(\text{W})$ consists of $(p+1)$-tuples $([\phi_0],  [\phi_1],\dots,[ \phi_p])$ satisfying $\phi_i(C)\cap \phi_j(C)=\varnothing$  for all distinct $i,j$.
\end{defn}
    We want to  prove that  $|\tE_{\bullet}(\W)|$ is highly connected. There is a natural surjective map $\theta:  |\tE_{\bullet}(\W)|\to |\tE(\W)|$ and any choice of total order on vertices of $\tE(\W)$ gives a section $|\tE(\W)|\to |\tE_{\bullet}(\W)|$.  Essentially with the same argument as \cite[Lemma 5.5]{galatius2014homological}, one can  show that $|\tE(\W)|$ is highly-connected. We use the connectivity of this complex to show that $|\tE_{\bullet}(\W)|$ is highly connected. First we need to recall an important technique known as a ``coloring lemma". 
 \begin{defn}  A simplicial complex $K$ is called $ \it{weakly}$  $\it{Cohen-Macaulay}$ of dimension $n$ and it is denoted by  $\it{w}CM(K)\geq n$ if it is $(n-1)$-connected and the link of any $p$-simplex is $(n-p-2)$-connected. 
 \end{defn}
\begin{defn}
 We say that a simplicial map $f: X\rightarrow Y$ of simplicial complexes is simplexwise injective if its restriction to each simplex of $X$ is injective, i.e. the image of any $p$-simplex of $X$ is a non-degenerate $p$-simplex of $Y$.
\end{defn}
The following theorem {\it generalized coloring lemma} is proved in \cite[Theorem 2.4]{galatius2014homological}.
\begin{thm}\label{coloringlemma}
 Let $X$ be a simplicial complex and $f:\partial I^n\rightarrow |X|$ be a map which is simplicial with respect to some PL triangulation on $\partial I^n$. Then, if $\it{w}CM(X)\geq n$, the triangulation extends to a PL triangulation of $I^n$, and $f$ extends to a simplicial map $g:I^n\rightarrow |X|$ with the property that $g(\text{\textnormal{Link}}(v))\subset \text{\textnormal{Link}}(g(v))$ for each interior vertex $v\in \text{\textnormal{int}}(I^n)$. In particular, g is simplexwise injective if $f$ is.
\end{thm}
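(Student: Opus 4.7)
I would prove the theorem by induction on $n$, following the template of the Hatcher--Wahl coloring lemma. The base case $n=0$ is immediate: $\mathit{w}CM(X) \ge 0$ simply says $X$ is nonempty, and extending the empty map $\partial I^0 = \varnothing \to |X|$ to any vertex of $X$ works, with the link condition vacuous.

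For the inductive step, proceed in two phases. \emph{First}, produce some simplicial extension: extend the given PL triangulation of $\partial I^n$ to a PL triangulation $T$ of $I^n$ by standard PL topology. Since $\mathit{w}CM(X) \ge n$ implies $|X|$ is $(n-1)$-connected, the map $f$ extends to a continuous $\tilde g: I^n \to |X|$, and a relative simplicial approximation theorem (subdividing only interior cells of $T$, leaving $\partial I^n$ fixed) then yields a simplicial extension $g_0 : T' \to X$ of $f$, where $T'$ is a refined PL triangulation of $I^n$.

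\emph{Second}, modify $g_0$ to enforce the link condition at every interior vertex, processing vertices one at a time. At an interior vertex $v$ where the condition fails, the link $L_v := \mathrm{Link}_{T'}(v)$ is a PL sphere of dimension at most $n-1$ and the image $Y_v := g_0(L_v)$ is a finite subcomplex of $X$. The modification will be supported in the closed star of $v$: choose a vertex $w$ of $X$ with $Y_v \subset \mathrm{Link}_X(w)$, redefine $g(v) := w$, subdivide if necessary, and recurse on any new failing vertices. The existence of $w$ and the validity of this refilling use the $\mathit{w}CM$ hypothesis applied inside $\mathrm{Link}_X(w)$: this subcomplex itself satisfies $\mathit{w}CM(\mathrm{Link}_X(w)) \ge n-1$, because the link in $\mathrm{Link}_X(w)$ of a $p$-simplex $\sigma$ equals $\mathrm{Link}_X(\sigma \cup \{w\})$, which is $(n-p-3)$-connected by hypothesis.

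The main obstacle is organizing this recursion so that it terminates and so that fixing the link condition at one vertex does not disturb it at another: in particular, when $\dim L_v = n-1$ the refilling inside $\mathrm{Link}_X(w)$ is again a dimension-$n$ problem but with the weaker hypothesis $\mathit{w}CM(\mathrm{Link}_X(w)) \ge n-1$, so a naive induction on $n$ alone does not close. The standard remedy, as in Hatcher--Wahl, is a double induction (on $n$ and on a lexicographic complexity measure, for instance the pair of the maximal link-dimension at which the condition still fails and the number of such failing vertices) combined with strict locality: every modification stays inside the closed star of the vertex being corrected, so vertices outside are untouched, and each local step strictly decreases the complexity. Once the link condition holds globally, the final clause is immediate: any nondegenerate simplex of $T'$ either lies in $\partial I^n$, where $f$ is simplexwise injective by hypothesis, or meets the interior at some vertex $v$, where the link condition at $v$ forces $g$ to be injective on that simplex.
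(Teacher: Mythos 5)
The paper does not actually prove this theorem; it is cited verbatim from \cite[Theorem~2.4]{galatius2014homological}, which in turn adapts the Hatcher--Wahl coloring lemma. So the comparison is with the referenced proof.

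Your sketch correctly identifies the overall two-phase template (continuous extension by $(n-1)$-connectivity, then a relative simplicial approximation, then a local fix-up by induction), and your observation that $\mathit{w}CM(\mathrm{Link}_X(w)) \geq n-1$ whenever $\mathit{w}CM(X) \geq n$ is exactly the right auxiliary fact. You also correctly identify the fact that a naive single induction on $n$ does not close, and the remedy of strictly local modifications plus a subsidiary complexity measure is in the right spirit.

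However, the core modification step is too optimistic as stated. You propose, at a failing interior vertex $v$, to \emph{choose a vertex $w$ of $X$ with $Y_v = g_0(\mathrm{Link}(v)) \subset \mathrm{Link}_X(w)$ and redefine $g(v) := w$}. But $\mathit{w}CM(X) \geq n$ does not furnish such a coning vertex $w$: it gives high connectivity of $X$ and of links, which lets you \emph{fill} the sphere $g_0|_{\mathrm{Link}(v)}$ by a PL disc inside an appropriate link, not \emph{cone it off by a single vertex}. Already for $n=2$ and $X$ the boundary of the tetrahedron (which satisfies $\mathit{w}CM \geq 2$), a $4$-cycle through all four vertices sits inside no $\mathrm{Link}_X(w)$. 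The actual argument in \cite{galatius2014homological} is finer: one singles out a maximal ``bad'' simplex, subdivides its star, and then uses the inductive hypothesis (applied at a lower effective dimension, inside a smaller link) to re-fill that star by a simplicial disc with the desired link property, with termination governed by a measure of badness over maximal bad simplices. Your ``subdivide if necessary, and recurse'' gestures at this but conflates the existence of a coning vertex with the existence of a filling disc, which is a genuine gap in the argument as written.
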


\begin{thm}\label{lcm}
 The simplicial complex $\text{\textnormal{E}}(\WW)$ is weakly Cohen-Macaulay with $wCM(\text{\textnormal{E}}(\WW))\geq (g-2)/2$. In particular, $|\text{\textnormal{E}}(\WW)|$ is at least $(g-4)/2$-connected.
\end{thm}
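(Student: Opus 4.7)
The plan is to argue by induction on $g$, with the inductive step resting on a link identification that reduces the link of a $p$-simplex in $\text{E}(\WW)$ to the analogous complex for a manifold of smaller genus. The base cases ($g$ small enough that $(g-2)/2\le 0$) amount to checking that $\text{E}(\WW)$ is non-empty and eventually path-connected, which I would do by exhibiting a standard embedding of the core into $\WW$ and sliding it along the collar of the fixed boundary chart $c$ to produce isotopic but disjoint competitors.

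For the inductive step, the key observation is that the link of a $p$-simplex $\sigma=([\phi_0],\ldots,[\phi_p])$ is canonically isomorphic to $\text{E}(\tW_{g-p-1,1})$. To see this I would first choose, inside the germ $[\phi_i]$, smooth representatives with disjoint open tubular neighborhoods $N_i$ of the images $\phi_i(C)$. Because $C = S^n\vee S^n\cup\gamma$ is the spine of $H\cong\tW_{1,1}$, a small enough $N_i$ is diffeomorphic to $\tW_{1,1}$ glued to a half-disk of $\partial \WW$ through the chart $c$ (with the glueing region determined by the collared condition at $t_{\phi_i}$). After smoothing corners, $\WW\setminus\bigsqcup_i\mathrm{int}(N_i)$ is diffeomorphic to $\tW_{g-p-1,1}$, and it inherits from $c$ a canonical boundary chart disjoint from the $t_{\phi_i}$. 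A germ $[\psi]$ of a collared core embedding into $\WW$ disjoint from $\sigma$ can be isotoped (keeping it collared) into this complement; conversely any collared core embedding in $\tW_{g-p-1,1}$ extends uniquely to $\WW$. By induction, $wCM(\text{E}(\tW_{g-p-1,1}))\ge (g-p-3)/2$, which is precisely $(g-2)/2-p-1$, the connectivity required for the link of a $p$-simplex in a complex with $wCM\ge (g-2)/2$.

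For the global connectivity statement ``$|\text{E}(\WW)|$ is $\lfloor(g-4)/2\rfloor$-connected'' I would proceed as follows. Given a map $f\colon S^k\to |\text{E}(\WW)|$ with $k\le (g-4)/2$, make $f$ simplicial after a PL triangulation, and use \Cref{coloringlemma} together with the link analysis above to replace $f$ by a simplexwise-injective simplicial map. Then I would find an auxiliary collared embedding $[\psi_0]$ of $C$ into $\WW$ disjoint from all finitely many cores $\phi(C)$ appearing as values $f(v)$; such a $\psi_0$ exists because $\WW$ admits a countable family of pairwise disjoint collared cores obtained by translating in the direction of $c$ inside the collar of $\partial\WW$ and then re-entering the bulk. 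Coning off $f$ at $[\psi_0]$ produces the desired extension $D^{k+1}\to |\text{E}(\WW)|$, since $[\psi_0]$ spans a simplex with every vertex of $f(S^k)$.

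The main obstacle will be the careful bookkeeping in the link identification: the chart $c$ is part of the datum defining ``collared'', and one must check that after excising the $N_i$ the residual manifold inherits a chart of the correct form and that the collared condition for $[\psi]$ restricts to the collared condition in the complement. This is manifold-theoretic and uses $n>2$ to guarantee the transversality, isotopy extension, and handle-smoothing statements needed to recognize neighborhoods of disjoint cores as standard $\tW_{1,1}$ pieces. The technical content is essentially the same as \cite[Lemma 5.5]{galatius2014homological}; the main adaptation is the boundary-collar structure that was not present in their closed-core setting.
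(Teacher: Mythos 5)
Your link identification is the right mechanism and rests on the same cancellation theorem the paper invokes, but the cone-off argument for the global connectivity of $|\text{E}(\WW)|$ fails. You assert that $\WW$ admits a countable family of pairwise disjoint collared cores obtained by translating near the boundary chart $c$. This is false: each collared core contains a copy of $S^n\vee S^n$ carrying a hyperbolic summand of $H_n(\WW;\bZ)$, and by the cancellation theorem \cite[Corollary 4.5]{Galatius-Randal-Williams} the complement of tubular neighbourhoods of $k$ pairwise disjoint cores is diffeomorphic to $\tW_{g-k,1}$, which for $k=g$ is a disk and supports no further core. So there are at most $g$ pairwise disjoint collared cores no matter how one translates near the collar. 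For a sufficiently large triangulation of $S^k$, the vertex images of $f$ can therefore exhaust the available homological room, leaving no coning vertex $[\psi_0]$. Indeed if the cone-off worked, it would show $|\text{E}(\WW)|$ is contractible for every $g$, contradicting the bounded connectivity $(g-4)/2$ in the statement; this bound is not a technical artefact but precisely the source of the stability range.

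There is also a circularity in the step where you invoke \Cref{coloringlemma} to make $f$ simplexwise injective before coning: that theorem requires $wCM(\text{E}(\WW))\geq k+1$ as a hypothesis, which is exactly what you are in the middle of proving. The paper avoids both problems by citing \cite[Lemma 5.5]{galatius2014homological} for the connectivity statement; that proof proceeds by comparing $\text{E}(\WW)$ with algebraically defined complexes of hyperbolic summands of the intersection form and reducing to a known connectivity theorem for such algebraic complexes, which is what ultimately yields the $(g-4)/2$ bound. This is a genuinely different (and necessary) argument that cannot be replaced by a geometric cone-off. A further caveat on your link analysis: the link of a $p$-simplex $\sigma$ is not literally isomorphic to $\text{E}(\tW_{g-p-1,1})$ --- the latter embeds in the former, but the link also contains germs whose representative cores pass through the chosen tubular neighbourhoods $N_i$ --- so a deformation argument is needed to compare their connectivities; this is the content of \cite[Corollary 6.3]{galatius2014homological}, which the paper cites for the wCM property.
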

\begin{proof}
Similar argument to \cite[Lemma 5.5]{galatius2014homological} implies that $|\tE(\W)|$ is $(g-4)/2$-connected. Being weakly Cohen-Macaulay is the consequence of the cancellation theorem \cite[Corollary 6.3]{galatius2014homological}.
\end{proof}
\begin{thm}\label{connectivity}
The geometric realization $| \text{\textnormal{E}}_{\bullet}(\text{\textnormal{W}}_{g,1})|$ is $\left \lfloor (g-4)/{2} \right\rfloor $-connected. 
\end{thm}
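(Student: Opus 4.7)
The plan is to reduce the connectivity of $|\tE_\bullet(\W)|$ to the connectivity of the underlying simplicial complex $|\tE(\W)|$ established in \Cref{lcm}, by pushing any sphere-valued map into $|\tE_\bullet(\W)|$ along the forgetful surjection $\theta\colon |\tE_\bullet(\W)|\to |\tE(\W)|$, applying the generalized coloring lemma (\Cref{coloringlemma}) to extend it, and then lifting the extension back to $|\tE_\bullet(\W)|$ by choosing orderings.

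Fix $k\leq \lfloor (g-4)/2\rfloor$ and let $f\colon \partial I^{k+1}\to |\tE_\bullet(\W)|$ be a continuous map. By simplicial approximation I may assume $f$ is simplicial with respect to some PL-triangulation of $\partial I^{k+1}$ carrying a total order on vertices; the semi-simplicial structure of $\tE_\bullet(\W)$ means $f$ is determined by a vertex assignment $v\mapsto [\phi_v]$ such that on each simplex $\{v_0<\dots<v_p\}$ the ordered tuple $([\phi_{v_0}],\dots,[\phi_{v_p}])$ lies in $\tE_p(\W)$. Composing with $\theta$ gives a simplicial map $\theta\circ f\colon \partial I^{k+1}\to |\tE(\W)|$, which is automatically simplexwise injective because the embeddings appearing in any simplex of $\tE_\bullet(\W)$ have pairwise disjoint images, hence represent distinct vertices of $\tE(\W)$.

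By \Cref{lcm}, $wCM(\tE(\W))\geq (g-2)/2 \geq k+1$, so \Cref{coloringlemma} applied to $\theta\circ f$ with $n=k+1$ produces a PL-triangulation of $I^{k+1}$ extending the one on the boundary, together with a simplexwise injective simplicial extension $g\colon I^{k+1}\to |\tE(\W)|$. To lift $g$ back across $\theta$, I extend the total order on vertices of $\partial I^{k+1}$ arbitrarily to a total order on all vertices of the triangulated $I^{k+1}$, and define $\tilde g\colon I^{k+1}\to |\tE_\bullet(\W)|$ by $\tilde g(v):=g(v)$ on vertices. For any simplex $\sigma=\{v_0<\dots<v_p\}$, simplexwise injectivity of $g$ combined with the fact that $g(\sigma)$ is a simplex of $\tE(\W)$ ensures that $g(v_0),\dots,g(v_p)$ are pairwise distinct embeddings with pairwise disjoint images, so the ordered tuple $(g(v_0),\dots,g(v_p))$ is indeed a simplex of $\tE_\bullet(\W)$. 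Thus $\tilde g$ is a well-defined simplicial extension, and by construction $\tilde g$ agrees with $f$ on $\partial I^{k+1}$.

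The essential observation is that $\tE_\bullet(\W)$ carries strictly more combinatorial data than $\tE(\W)$ (each unordered $p$-simplex lifts to $(p+1)!$ ordered ones), yet once a simplexwise injective extension has been produced in $\tE(\W)$ the extension of the vertex ordering from the boundary is unobstructed. The only nontrivial input is therefore \Cref{lcm} together with \Cref{coloringlemma}; the remainder of the argument is formal. The step requiring the most care is checking that $\theta\circ f$ is simplexwise injective so that \Cref{coloringlemma} yields a simplexwise injective extension, which is what makes the lift to $\tE_\bullet(\W)$ possible.
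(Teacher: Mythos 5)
Your overall strategy---push $f$ along $\theta$, invoke \Cref{lcm} and \Cref{coloringlemma} to extend over $I^{k+1}$, then lift back to $|\tE_\bullet(\W)|$---is the same as the paper's, and your computation of the range ($wCM(\tE(\W))\geq (g-2)/2\geq k+1$ iff $k\leq(g-4)/2$) and your observation that $\theta\circ f$ is automatically simplexwise injective are both correct. The problem is the very first preparation step, on which the whole lifting argument rests: you assume the PL triangulation of $\partial I^{k+1}$ can be given a \emph{global} total order on vertices compatible with $f$, i.e.\ such that on every boundary simplex $\{v_0<\dots<v_p\}$ the tuple $([\phi_{v_0}],\dots,[\phi_{v_p}])$, with precisely that order, is a simplex of $\tE_\bullet(\W)$. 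A simplicial map into a semi-simplicial set only supplies a \emph{local} ordering of the vertices of each simplex, and these local orderings need not be consistent with any total order. Already for $k=1$ one can have a triangulated $\partial I^2$ mapping so that every edge is ``oriented forward,'' producing a cyclic relation $v_1<v_2<\dots<v_m<v_1$; no compatible total order exists. This is not a pathology you can wave away: the map $|\tE_\bullet(\W)|\to|\tE(\W)|$ is genuinely not a homotopy equivalence in general (e.g.\ for the simplicial complex consisting of a single edge, the ordered semi-simplicial set realizes to $S^1$), and the cyclic orderings are exactly what detect the difference. When such a cycle occurs, your $\tilde g$, being built entirely from a chosen total order, does \emph{not} restrict to $f$ on $\partial I^{k+1}$, so no null-homotopy of $f$ is produced.

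The paper's proof handles the lift differently: it uses a section of $\theta$ (equivalently, a total order on the vertex set of $\tE(\W)$ itself, not of the triangulation) together with the fact that the extension is simplexwise injective, and in particular injective on interior vertices, to choose compatible orderings for simplices meeting the interior while keeping $f$'s prescribed orderings on the boundary. Reconciling those two sources of ordering on simplices containing both boundary and interior vertices is the delicate point, and your argument as written does not engage with it. To repair the proposal you would need either to first homotope $f$ (e.g.\ through a suitable subdivision) into a form compatible with a global total order, or to mimic the paper's more careful case-by-case construction of the orderings; the statement ``extend the total order on vertices of $\partial I^{k+1}$ arbitrarily'' presupposes an object that does not exist in general.
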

\begin{proof}
For $k\leq (g-4)/2$, let $f:\partial I^{k+1}\to | \text{\textnormal{E}}_{\bullet}(\text{\textnormal{W}}_{g,1})|$ be a map. We can assume that $f$ is a simplicial map with respect to some PL triangulation on $\partial I^{k+1}$. \Cref{lcm} implies that $\partial I^{k+1}\xrightarrow{f} | \text{\textnormal{E}}_{\bullet}(\text{\textnormal{W}}_{g,1})|\to |\text{\textnormal{E}}(\WW)|$ is null-homotopic. To prove the claim, we need to solve the following homotopy lifting problem
\[ 
\begin{tikzpicture}[node distance=2cm, auto]
  \node (A) {$\partial I^{k+1}$};
  \node (B) [right of=A] {$|\tE_{\bullet}(\tW_{g,1})|$};
  \node (C) [below of=A, node distance=1.7cm] {$ I^{k+1}$};  
  \node (D) [below of=B, node distance=1.7cm] {$|\text{\textnormal{E}}(\WW)|.$};
  \draw[->, dashed] (C) to node  {$\tilde{h}$} (B);
  \draw[->] (C) to node {$h$} (D);
  \draw [right hook->] (A) to node {}(C);
  \draw [->] (A) to node {$f$} (B);
  \draw [->] (B) to node {$\theta$} (D);
\end{tikzpicture}
\]
Since $|\text{\textnormal{E}}(\WW)|$ is weakly Cohen-Macaulay with $wCM(\text{\textnormal{E}}(\WW))\geq (g-2)/2$, by \Cref{coloringlemma}, we can extend the triangulation of $\partial I^{k+1}$ to a triangulation of $I^{k+1}$ and change $h$ by a homotopy relative to $\partial I^{k+1}$ so that $h$ is PL and simplexwise injective on the interior of $I^{k+1}$. Given that $\theta$ has a section and $h$ is injective on the interior vertices, we can find a lift $\tilde{h}$. 
\end{proof}

 \subsection{Orbits of the action of $\dDiff(\W,\partial)$ on $\tE_\bullet(\W)$}
 In proving homological stability, it is convenient to have a transitive action. As we shall see, the action of $\dDiff(\W,\partial)$ on $\tE_\bullet(\W)$ is not transitive, but it turns out that the set of  orbits is  independent of $g$ (See \Cref{independenceofg}). To describe the set of the orbits, let $\text{C}_p(\bR^{2n-1})$  be the configuration space of ordered $p$ points in $\bR^{2n-1}$. To every $p$-simplex $\phi=([\phi_0],  [\phi_1],\dots,[ \phi_p])$ in $\tE_p(\W)$, we associate  $(t_{\phi_0}, t_{\phi_1},\dots,t_{\phi_p})\in\text{C}_{p+1}(\bR^{2n-1})$. The diffeomorphism group  $\dDiff(\W,\partial)$  fixes the boundary, hence it does not change $(t_{\phi_0}, t_{\phi_1},\dots,t_{\phi_p})$ associated to $\phi$ and if two $p$-simplices give two different elements in $\text{C}_{p+1}(\bR^{2n-1})$, they are obviously in different orbits. Therefore we can define a  map
  \[
  \begin{tikzpicture}[node distance=3.9cm, auto]
  \node (A) {$I: \tE_\bullet(\W)/\dDiff(\W,\partial)$};
  \node (B) [right of=A] {$\text{C}_{p+1}(\bR^{2n-1}).$};
  \draw [->] (A) to node {} (B);
\end{tikzpicture}
\]
We use $\text{C}_{p+1}(\bR^{2n-1})$ as an indexing set to describe the orbit decomposition of the action of $\dDiff(\W,\partial)$ on $\tE_p(\W)$. For every $\sigma \in \text{C}_{p+1}(\bR^{2n-1})$, we choose a fixed $p$-simplex $\phi_\sigma$ in $\tE_p(W)$ whose image under $I$ is $\sigma$. We denote the stabilizer of $\phi_\sigma$ by $\dDiff(\tW_{g,1},\partial)_\sigma$.
\begin{rem}\label{Kreck}
By Kreck's cancellation theorem \cite[Theorem D]{kreck1999surgery} or \cite[Corollary 4.5]{Galatius-Randal-Williams}, the complement of an open neighborhood of $(p+1)$ embedded cores given by $\phi_\sigma$ is diffeomorphic to $\tW_{g-p-1,1}$. Hence different choices for $\phi_{\sigma}$ lead to isomorphic stabilizer subgroups of $\dDiff(\WW,\partial)$. 
\end{rem}
  \begin{lem}\label{orbit}
   Every two $p$-simplices with the same image in $\text{\textnormal{C}}_{p+1}(\bR^{2n-1})$ are in the same orbit as $g-p\geq 2$.  Therefore, the orbit decomposition is 
   \[\text{\textnormal{E}}_p(\WW)=\coprod_{\sigma\in \text{\textnormal{C}}_{p+1}(\bR^{2n-1})}\dDiff(\WW,\partial)/{\dDiff(\ttW_{g,1},\partial)_{\sigma}}\]
  \end{lem}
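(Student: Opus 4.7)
The plan is to prove transitivity of the $\dDiff(\W,\partial)$-action on each fiber $I^{-1}(\sigma)$ by induction on $p$, with the cancellation theorem as the central geometric input. The map $I$ is manifestly $\dDiff(\W,\partial)$-invariant (diffeomorphisms fix the boundary pointwise, so they preserve each $t_{\phi_i}$), and every $\sigma \in \text{C}_{p+1}(\bR^{2n-1})$ lies in the image of $I$ (pick any $p+1$ disjoint collared embeddings realizing the required boundary points, which is possible since $\W$ has enough room). Once transitivity on each $I^{-1}(\sigma)$ is established, the displayed orbit decomposition is then immediate.

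For the inductive step, assume the result for $(p-1)$-simplices. Given $\phi,\psi \in I^{-1}(\sigma)$ with $\sigma = (t_0,\dots,t_p)$, apply the induction hypothesis to the truncations $(\phi_0,\dots,\phi_{p-1})$ and $(\psi_0,\dots,\psi_{p-1})$, which both map to $(t_0,\dots,t_{p-1})$, to obtain $F' \in \dDiff(\W,\partial)$ matching them; replace $\phi$ by $F'\phi$, so that $\phi_i = \psi_i$ as germs for $i < p$. After shrinking representatives, choose mutually disjoint regular neighborhoods $N_0,\dots,N_{p-1}$ of the common cores $\phi_i(C) = \psi_i(C)$, taken small enough to be disjoint from $\phi_p(C) \cup \psi_p(C)$. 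By iterated application of Kreck's cancellation theorem \cite{kreck1999surgery} (or \cite[Corollary 4.5]{Galatius-Randal-Williams}), the closure $K$ of $\W \setminus \bigcup_{i<p} N_i$ is diffeomorphic to $\tW_{g-p,1}$; this is where the hypothesis $g - p \geq 2$ is required, since each iteration demands that the remaining genus be at least $2$. The problem therefore reduces to the base case $p = 0$ inside $K \cong \tW_{g-p,1}$.

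For the base case, the task is: for $h \geq 2$ and any two collared germs $[\phi],[\psi]$ of embeddings $C \hookrightarrow \tW_{h,1}$ with the same boundary point, produce $F \in \dDiff(\tW_{h,1},\partial)$ with $F\circ\phi = \psi$ as germs. I would take regular neighborhoods $N_\phi, N_\psi$ of $\phi(C), \psi(C)$, each diffeomorphic to $H$, and invoke cancellation once more to identify both complements with $\tW_{h-1,1}$. The diffeomorphism $F$ is then built piecewise: on $N_\phi$ it is the obvious transition diffeomorphism sending $\phi$-coordinates to $\psi$-coordinates, and on the complement it is extended by a diffeomorphism $\tW_{h,1}\setminus\mathrm{int}(N_\phi) \to \tW_{h,1}\setminus\mathrm{int}(N_\psi)$. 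The main obstacle, and the technical heart of the argument, is matching these two pieces along the separating sphere $\partial N_\phi \cong \partial N_\psi \cong S^{2n-1}$: one must show that the two boundary-connect-sum decompositions $\tW_{h,1} \cong H \natural \tW_{h-1,1}$ given by $\phi$ and by $\psi$ can be identified by a diffeomorphism that is the identity on the prescribed collar near $\partial \tW_{h,1}$ and that respects the parameterizations on the $H$-summand. This compatibility is precisely provided by the relative (boundary-fixing) version of the cancellation theorem in the stable range $h \geq 2$; combined with uniqueness of tubular neighborhoods of the separating sphere, it gives the required $F \in \dDiff(\tW_{h,1},\partial)$.
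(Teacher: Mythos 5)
Your overall strategy (cancellation theorem as the geometric engine, reduction to a $p=0$ base case) coincides with the paper's, and your inductive step --- truncate, cut out all $p$ neighborhoods at once, reduce to the base case inside a copy of $\tW_{g-p,1}$ --- is a correct reorganization of the paper's sequential one-core-at-a-time argument. The gap is in the base case. The paper does \emph{not} argue by cut-and-reglue along a separating sphere: it restricts the two collared embeddings to the smaller manifold $\W^{2\epsilon/3}$ (complement of a collar of $\partial\W$), where they become honest embeddings $\psi_1,\psi_2 : H \hookrightarrow \W^{2\epsilon/3}$, and cites \cite[Corollary 4.4]{Galatius-Randal-Williams} to produce an \emph{ambient} diffeomorphism $l$ of $\W^{2\epsilon/3}$ with $l\circ\psi_1=\psi_2$. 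Crucially $l$ is only \emph{isotopic} to the identity on $\partial\W^{2\epsilon/3}$, not equal to it, so the remainder of the proof interpolates that isotopy through the collar $\partial\W\times[\epsilon/3,2\epsilon/3]$ and inserts the identity on $\partial\W\times[0,\epsilon/3]$, finally landing in $\dDiff(\W,\partial)$.

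Your version has to face both of these issues and resolves neither. The cancellation theorem gives you a diffeomorphism $\tW_{h,1}\setminus N_\phi \to \tW_{h,1}\setminus N_\psi$, but nothing forces its restriction to $\partial N_\phi$ to agree with --- or even, a priori, be isotopic to --- the transition diffeomorphism $N_\phi\to N_\psi$ you chose on the other piece; ``uniqueness of tubular neighborhoods'' gives isotopy uniqueness of $N_\phi$ among tubular neighborhoods of the fixed core, which is a different statement from compatibility of the two boundary restrictions. And even granting that, the ambient diffeomorphism one assembles will generally only be isotopic to the identity on $\partial\tW_{h,1}$; the explicit collar interpolation in the paper is exactly what converts such a diffeomorphism into a genuine element of $\dDiff(\tW_{h,1},\partial)$, and your proof omits it. The point of invoking GRW Corollary~4.4 rather than reconstructing the ambient diffeomorphism by gluing is precisely that the matching problem you correctly flag as ``the technical heart'' has already been solved there.
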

\begin{proof}
 First, we prove the claim for $0$-simplices, so assume that $p=0$ and let $[\phi_1]$ and $[\phi_2]$ be $0$-simplices such that $I(\phi_1)=I(\phi_2)$. There exists a tubular neighborhood of the core $ U\supset C$ and positive numbers $\epsilon, \eta$  and representatives  $\phi_1$ and $\phi_2$ for the  germs of collared embedding of the core such that $[0,\epsilon)\times\text{B}_{\eta}(0)\subset U$ and $\phi_1([0,\epsilon)\times \text{B}_{\eta}(0))=\phi_2([0,\epsilon)\times \text{B}_{\eta}(0))$. 
 
 Suppose that we fix a collar neighborhood $\partial\W\times [0,1)\hookrightarrow \W$ and for a positive $\epsilon$, let $\W^{\epsilon}$ denote the complement of the $\epsilon$-collar. The intersections of $\phi_i(U)$'s and $\W^{2\epsilon/3}$ are  collared embeddings of the core in $\W^{2\epsilon/3}$. These intersections provide us with embeddings $\psi_i:H\hookrightarrow\W^{2\epsilon/3}$ satisfying $\psi_i([0,\epsilon/3)\times D^{2n-1})=\phi_i([2\epsilon/3,\epsilon)\times \text{B}_{\eta}(0))$. Using \cite[Corollary 4.4]{Galatius-Randal-Williams}, we can find a diffeomorphism $l\in \mathrm{Diff}^{\delta}(\W^{2\epsilon/3})$ that is isotopic to the identity on the boundary $\partial\W\times \{2\epsilon/3\}$ such that $l\circ\psi_1=\psi_2$. Furthermore, from the proof of \cite[Corollary 4.4]{Galatius-Randal-Williams}, it is clear that $l$ can be chosen so that the restriction of $l$ to $\partial\WW\times [2\epsilon/3,\epsilon)$ be equal to $l|_{\partial \W^{2\epsilon/3}}\times \text{id}$.  Since $l|_{\partial \W^{2\epsilon/3}}$ is isotopic to the identity, we have a map 
 \[
 h:\partial \W^{2\epsilon/3}\times [\epsilon/3, 2\epsilon/3]\longrightarrow \partial \W^{2\epsilon/3}
 \]
where for all $x\in \partial \W^{2\epsilon/3}$, $h$ satisfies $h(x,t)= x$ for $t\in [\epsilon/3,4\epsilon/9)$ and $h(x,t)=l(x)$ for $t\in(5\epsilon/9,2\epsilon/3]$.  Hence, by gluing $l$, $h$ and the identity on $\epsilon/3$-collar together, we obtain a diffeomorphism $f\in \dDiff(\W,\partial)$ that sends  $\phi_1(V)$ to  $\phi_2(V)$ for an open neighborhood $V$ containing the core  such that $C\subset V\subset U$.

Now let $\phi_i=([\phi^0_i], [\phi^1_i],\dots, [\phi^p_i])$ for $i=1,2$ be two $p$-simplices so that $I(\phi_1)=I(\phi_2)$. There exists  a diffeomorphism $f_0\in \dDiff(\W,\partial)$ such that for a neighborhood of the core $C\subset U_0$, $f_0$ satisfies $f_0\circ\phi^0_1(U_0)=\phi^0_2(U_0)$. We can choose $U_0$ so that $\W\backslash \phi^0_i(U_0)$ is a manifold with boundary (without corners). By the cancellation theorem  \cite[Corollary 4.5]{Galatius-Randal-Williams} the manifold $\W\backslash \phi^0_2(U_0)$ is diffeomorphic to $\tW_{g-1,1}$. Note that since $f_0\circ \phi^1_1(C),\phi^1_2(C)\subset \W\backslash  \phi^0_2(U_0)$, by the same argument as above,  a diffeomorphism $f_1\in \dDiff(\W\backslash  \phi^0_2(U_0),\partial)$ exists such that $f_1\circ\phi^1_1(U_1)=\phi^1_2(U_1)$ for a neighborhood $U_1$ of the core. We extend $f_1$ via the identity to a  diffeomorphism of $\W$. By repeating this argument, we obtain a diffeomorphism $f=f_p\circ f_{p-1}\circ\cdot\circ f_0$ that sends the first $p$-simplex to the other.
\end{proof}

%
Fix once for all a coordinate patch near the boundary $c:\bH=\bR_+\times\bR^{2n-1}\hookrightarrow \W$, which is disjoint from the embedding $e:\{0\}\times D^{2n-1}\hookrightarrow \partial\W\subset \W$ that we used to define the boundary connected sum $\W\natural \tW_{1,1}$ and the nonstandard stabilization map. 
\begin{cor}\label{independenceofg}
For the pair $(\WW,c)$, the map $\text{\textnormal{E}}_{\bullet}(\WW)\rightarrow \text{\textnormal{E}}_{\bullet}(\ttW_{g+1,1})$ induced by the nonstandard stabilization map is a bijection between orbits of the action of  $\dDiff(\WW,\partial)$ on $\text{\textnormal{E}}_{\bullet}(\WW)$ and the action of $\dDiff(\ttW_{g+1,1},\partial)$ on $\text{\textnormal{E}}_{\bullet}(\ttW_{g+1,1})$.
\end{cor}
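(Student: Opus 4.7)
The plan is to deduce this directly from the orbit description of \Cref{orbit}, once one observes that the fixed boundary chart $c$ is compatible with the nonstandard stabilization. The key point is that $c:\bH\hookrightarrow \WW$ was chosen disjoint from the embedding $e:\{0\}\times D^{2n-1}\hookrightarrow \partial\WW$ used to form $\WW\natural\tW_{1,1}$. Hence the diffeomorphism $f:\WW\natural\tW_{1,1}\to \ttW_{g+1,1}$ carries $c$ onto a boundary chart of $\ttW_{g+1,1}$, and this is the fixed chart I would use on $\ttW_{g+1,1}$. With this choice, the inclusion $\iota:\WW\hookrightarrow \ttW_{g+1,1}$ sends each collared embedding of the core to a collared embedding with unchanged basepoint in $\{0\}\times\bR^{2n-1}$, and sends disjoint collections of cores to disjoint collections. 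Postcomposing germs with $\iota$ therefore defines a semi-simplicial map $\tE_\bullet(\WW)\to\tE_\bullet(\ttW_{g+1,1})$ that is equivariant for the nonstandard stabilization homomorphism (since $ns_f(h)$ agrees with $h$ on $\iota(\WW)$) and commutes with the orbit invariant $I$ recording basepoints.

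The bijection on orbits then follows immediately. For each $p$ with $g\geq p+2$, both $g-p\geq 2$ and $(g+1)-p\geq 2$ hold, and \Cref{orbit} identifies the two orbit sets
\[
\tE_p(\WW)/\dDiff(\WW,\partial)\xrightarrow{\;I\;}\text{C}_{p+1}(\bR^{2n-1})\xleftarrow{\;I\;}\tE_p(\ttW_{g+1,1})/\dDiff(\ttW_{g+1,1},\partial).
\]
Since the map of orbit sets induced by the nonstandard stabilization commutes with these two copies of $I$ by the previous paragraph, it fits into a commutative triangle with two bijections and is therefore itself a bijection. I do not anticipate any real obstacle here: the substance of the corollary is already contained in \Cref{orbit}, and what remains is simply the bookkeeping between the fixed boundary chart on $\WW$ and the one on $\ttW_{g+1,1}$ that is determined by $f$.
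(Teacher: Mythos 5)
Your proof is correct and takes essentially the same route as the paper, which treats this corollary as an immediate consequence of \Cref{orbit}: the fixed chart $c$ is disjoint from $e$, so the nonstandard stabilization preserves the boundary chart, the induced semisimplicial map commutes with the invariant $I$, and \Cref{orbit} identifies both orbit sets with $\text{C}_{p+1}(\bR^{2n-1})$. Your observation that this identification requires $g-p\geq 2$ is a genuine point of care --- the corollary as stated is silent on the range --- but since the resolution is only used for $p \lesssim (g-2)/2$, the restriction is harmless in context.
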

We now use the high connectivity of the realization of $\tE_{\bullet}(\W)$ to construct a semisimplicial resolution for $\BdDiff(\W,\partial)$, meaning a semisimplicial space $X_{\bullet}$ with an augmentation map $X_{\bullet}\rightarrow \BdDiff(\W,\partial)$ such that the map $|X_{\bullet}|\rightarrow \BdDiff(\W,\partial)$ is highly connected.
\begin{con}\label{construction}
Recall from \Cref{collared} that for the pair $(\W,c)$, we defined $\tE_p(\W)$ to be the set of $p$-tuples of disjoint \emph{collared} embedded cores . Let
 \[ X_p= (\tE\dDiff(\W,\partial)\times \tE_p(\W))/ {\dDiff(\W,\partial)}\]
\end{con}
\noindent be the homotopy quotient of the action of $\dDiff(\W,\partial)$ on $\tE_p(\W))$. The face maps of $X_{\bullet}$ are induced by the face maps of the semisimplicial set $ \tE_{\bullet}(\W)$. Note that $X_{\bullet}$ is a semisimplicial space augmented over $\BdDiff(\W,\partial)$.
 \begin{prop}
  $X_{\bullet}$ is a $\left \lfloor (g-2)/2 \right\rfloor $-resolution for $\BdDiff(\WW,\partial)$, i.e. the map $|X_{\bullet}|\rightarrow \BdDiff(\WW,\partial)$ induced by the augmentation is $\left \lfloor (g-2)/{2}\right \rfloor $-connected. 
 \end{prop}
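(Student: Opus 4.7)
The plan is to identify $|X_\bullet|$ with the Borel construction on $|\tE_\bullet(\W)|$ and then read off the connectivity of the augmentation from the connectivity of the resolving complex already established in \Cref{connectivity}.

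First I would observe that since $E\dDiff(\W,\partial)$ is independent of the semisimplicial degree, geometric realization commutes with the diagonal quotient, yielding
\[
|X_\bullet| \;\cong\; E\dDiff(\W,\partial) \times_{\dDiff(\W,\partial)} |\tE_\bullet(\W)|,
\]
where $\dDiff(\W,\partial)$ acts on $|\tE_\bullet(\W)|$ via its levelwise action on $\tE_\bullet(\W)$. This identification can be verified either degreewise (realizations of semisimplicial spaces commute with products by a fixed space and with quotients by a group action acting levelwise) or by interpreting $X_\bullet$ as the bar construction $B(\dDiff(\W,\partial), \dDiff(\W,\partial), \tE_\bullet(\W))$. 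Under this identification, the augmentation $|X_\bullet| \to \BdDiff(\W,\partial)$ becomes the projection of the Borel construction onto $\BdDiff(\W,\partial) = E\dDiff(\W,\partial)/\dDiff(\W,\partial)$, which is a fibration with fiber $|\tE_\bullet(\W)|$.

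The connectivity of the augmentation then follows from the long exact sequence of homotopy groups of this fibration: if the fiber is $n$-connected then the total space maps $(n+1)$-connectedly onto the base. By \Cref{connectivity}, the fiber $|\tE_\bullet(\W)|$ is $\lfloor (g-4)/2 \rfloor$-connected, so the augmentation is
\[
\lfloor (g-4)/2 \rfloor + 1 \;=\; \lfloor (g-2)/2 \rfloor
\]
-connected, as required.

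The computation is essentially immediate once the identification with the Borel construction is in place, so the only real point to be careful about is the commutation of geometric realization with the homotopy quotient; this is a standard fact but should be stated explicitly. Beyond that, the content of the argument has been packaged into the connectivity of $|\tE_\bullet(\W)|$ (the "hard" input from \Cref{lcm} and \Cref{connectivity}), and this proposition is just the translation of that connectivity statement into a resolution of $\BdDiff(\W,\partial)$ suitable for the spectral sequence of \Cref{spectralsequence}.
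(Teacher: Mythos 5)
Your proof is correct and follows essentially the same route as the paper: the paper appeals to the cited general lemma that the homotopy fiber of an augmented semisimplicial space can be computed levelwise, identifies that levelwise fiber as $\tE_\bullet(\W)$, and then reads off the connectivity from \Cref{connectivity}, which is exactly what your Borel-construction identification makes explicit. The arithmetic $\lfloor(g-4)/2\rfloor + 1 = \lfloor(g-2)/2\rfloor$ and the shift from fiber-connectivity to map-connectivity are both as the paper intends.
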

 \begin{proof}
 For an augmented semisimplicial space $\epsilon_{\bullet}: X_{\bullet}\rightarrow X_{-1}$, the homotopy fiber of $|X_{\bullet}|\rightarrow X_{-1}$ can be computed levelwise \cite[Lemma 2.1.]{randal2009resolutions}, meaning that the following square is weakly homotopy pullback square
 \[  
 \begin{tikzpicture}[node distance=2cm, auto]
  \node (A) {$|\text{hofib}( \epsilon_{\bullet})|$};
  \node (B) [right of=A] {$|X_{\bullet}|$};
  \node (C) [below of=A, node distance=1.7cm] {$*$};  
  \node (D) [below of=B, node distance=1.7cm] {$X_{-1}.$};
  \draw[->] (C) to node  {}(D);
  \draw [->] (A) to node {}(C);
  \draw [->] (A) to node {$$} (B);
  \draw [->] (B) to node {$$} (D);
\end{tikzpicture}
\]
Therefore, from the construction above, we obtain the semi-simplicial space $X_{\bullet}$ augmented over $\BdDiff(\W,\partial)$. The levelwise fiber of the augmentation map $X_{\bullet}\rightarrow \BdDiff(\W,\partial)$ is the semisimplicial space $\tE_{\bullet}(\W)$ whose geometric realization $|\tE_{\bullet}(\W)|$  by   \Cref{connectivity} is $\left \lfloor (g-4)/{2}\right \rfloor -\text{connected}$. Hence, $X_{\bullet}$ is a $\left \lfloor (g-2)/2 \right\rfloor $-resolution for $\BdDiff(\W,\partial)$.
 \end{proof}

\section{Proof of Theorem \ref{MainTheorem}}\label{spectralsequence}
 \begin{proof} We use the ``relative" spectral sequence argument in the sense of \cite[Proposition 4.2]{charney1987generalization} to prove homological stability by induction. The nonstandard stabilization map induces a semisimplicial map
 \[
{ns_f}_{\bullet}: X_{\bullet}(\W)\to X_{\bullet}(\tW_{g+1,1}).
 \]
The relative spectral sequence (see \cite[Section 2.1]{randal2009resolutions}) for this semisimplicial map  takes the form
\[ 
 \begin{tikzpicture}[node distance=6cm, auto]
  \node (A) {$E^1_{p,q}=H_q(X_p(\tW_{g+1,1}), X_p(\W))$};
  \node (B) [right of=A] {$H_{p+q}(|X_p(\tW_{g+1,1})|, |X_p(\W)|).$};
  \draw [->,double] (A) to node {$$} (B);
\end{tikzpicture}
\]

The facts that $X_{\bullet}(\tW_{g+1,1})$ is a $(g-1)/2$-resolution for $\BdDiff(\tW_{g+1,1},\partial)$ and $X_{\bullet}(\W)$ is a $(g-2)/2$-resolution for $\BdDiff(\tW_{g,1},\partial)$ imply the map  
\[
H_{p+q}(|X_p(\tW_{g+1,1})|, |X_p(\W)|)\to H_{p+q}(\BdDiff(\tW_{g+1,1},\partial), \BdDiff(\tW_{g,1},\partial)),
\]
is surjective as long as $p+q\leq (g-2)/2$ and isomorphism for $p+q< (g-2)/2$ . In order to prove \Cref{MainTheorem}, we need to show that \[H_{k}(\BdDiff(\tW_{g+1,1},\partial), \BdDiff(\tW_{g,1},\partial))=0\] as long as $k\leq (g-2)/2$.

{\it Claim:} For $p\geq 0$ and $q\leq (g-p-3)/2$, we have 
\[ E^1_{p,q}=\bigoplus_{\sigma} H_q(\BdDiff(\tW_{g+1,1},\partial)_{\sigma}, \BdDiff(\tW_{g,1},\partial)_{\sigma})=0. \]
Note that by \Cref{Kreck} each summand in $E^1_{p,q}$ is in fact isomorphic to $$H_q(\BdDiff(\tW_{g-p,1},\partial), \BdDiff(\tW_{g-p-1,1},\partial)).$$ Thus for $p\geq 0$ and $q\leq (g-p-3)/2$, the above group   is zero by induction on $g$.

 The first page of the spectral sequence in the range that we are interested in looks like
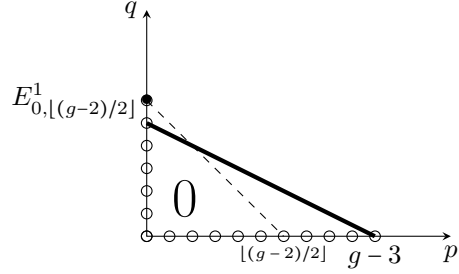
\begin{figure}[ht]\label{ss}
\begin{tikzpicture}
\draw [<->] (0,3) -- (0,0) -- (4,0);
\draw [ultra thick] (0,1.5) -- (3,0);
\draw [dashed] (0,1.8) -- (1.8,0);
\node [below] at (3,0) {$g-3$};
\node [left] at (0,1.8) {$E^1_{0,\left\lfloor(g-2)/2\right\rfloor}$};
\node at (0,1.8) {$\bullet$};
\node at (0.5,0.5) {\Huge$0$};
\node [left] at (0,3) {$q$};
\node [below] at (4,0) {$p$};
\node [below] at (1.8,0) {\tiny$\left\lfloor(g-2)/2\right\rfloor$};
\foreach \x in {0,...,6}
\draw (0,\x/3.33) circle (0.07cm);
\foreach \x in {0,...,10}
\draw (\x/3.33,0) circle (0.07cm);
\end{tikzpicture}
\caption{ First page of spectral sequence}
\end{figure}

On the first page of the spectral sequence, everything below the thick line that is given by $p+2q=g-3$ in Figure $6$, is zero  by induction. If $g$ is an odd number, then $E^1_{p,q}=0$ for $p+q\leq \left\lfloor(g-2)/2\right\rfloor$. If $g$ is even, then everything except $E^1_{0,\left\lfloor(g-2)/2\right\rfloor}$ on the dashed line in Figure $6$ and below is zero . In order to finish the proof, we need to show that the image of $E^1_{0,\left\lfloor(g-2)/2\right\rfloor}$ in $E^\infty_{0,\left\lfloor(g-2)/2\right\rfloor}$ vanishes.
Given \Cref{relative} below, all cycles in \[E^1_{0,\left\lfloor(g-2)/2\right\rfloor}= \bigoplus_{\sigma \in \text{C}_{1}(\bR^{2n-1})} H_{\left\lfloor(g-2)/2\right\rfloor}(\BdDiff(\tW_{g+1,1},\partial)_{\sigma},\BdDiff(\tW_{g,1},\partial)_{\sigma})\] 
die in $E^{\infty}$-page  which completes the proof of \Cref{MainTheorem}.
  \end{proof}
  
  To prove that cycles in $E^1_{0,\left\lfloor(g-2)/2\right\rfloor}$ do not survive to the $E^{\infty}$-page, we modify a factorization theorem  that is formulated for spaces in \cite[Prop 6.7]{randal2009resolutions} and for groups in \cite[Lemma 2.5]{wahl2010homological}. Let us first recall its statement for groups. Let $G_0, G_1, G_2$ be subgroups of a group $G$ fitting into a  diagram 
 \[
 \begin{tikzpicture}[node distance=1.5cm, auto]
  \node (A) {$G_0$};
  \node (B) [right of=A] {$G_1$};
  \node (C) [below of=A] {$G_2$};  
  \node (D) [below of=B] {$G$};
  \draw[right hook->] (C) to node  {}(D);
  \draw [right hook->] (A) to node {}(C);
  \draw [right hook->] (A) to node {$$} (B);
  \draw [right hook->] (B) to node {$$} (D);
  \draw [->, dotted] (B) to node [swap]{$t$} (C);
\end{tikzpicture}
\]
where there exists $t\in G$ such that $G_1\subset t\cdot G_2\cdot t^{-1}$ and $G_0$ is fixed under the conjugation by the element $t$, then the map $H_k(G_1,G_0)\rightarrow H_k(G,G_2)$ canonically factors through $H_{k-1}(G_0)$. To prove that every element in $E^1_{0,\left\lfloor(g-2)/2\right\rfloor}$ dies in the $E^{\infty}$-page, we invoke Randal-Williams' idea to show that for every $\sigma \in \text{C}_{1}(\bR^{2n-1})$, the image of any cycle 
\[[x] \in H_{\left\lfloor(g-2)/2\right\rfloor}(\BdDiff(\tW_{g+1,1},\partial)_{\sigma},\BdDiff(\tW_{g,1},\partial)_{\sigma})\]
under the natural inclusion  into {\small$H_{\left\lfloor(g-2)/2\right\rfloor}(\BdDiff(\tW_{g+1,1},\partial),\BdDiff(\tW_{g,1},\partial))$} factors (non-canonically) through {\small$H_{\left\lfloor(g-4)/2\right\rfloor}(\dDiff(\tW_{g,1},\partial)_{\sigma})$}.

 To define the nonstandard stabilization map, recall that we fixed an embedding $e:\{0\}\times D^{2n-1}\hookrightarrow \partial\W$ which does not intersect our chosen chart $c:\{0\}\times\bR^{2n-1}\hookrightarrow \partial\W$, then we attached $H$ to $\W$ via gluing $\{0\}\times D^{2n-1}\subset H$ to $e(\{0\}\times D^{2n-1})$ to obtain $\W \natural \tW_{1,1}$.  We are interested in the following diagram where the horizontal maps are given by the nonstandard stabilization map and the vertical maps are inclusions of the stabilizer subgroups
\[
 \begin{tikzpicture}[node distance=1.5cm, auto]
  \node (A) {$\dDiff(\ttW_{g,1},\partial)_{\sigma}$};
  \node (B) [right of=A, node distance=3.5cm] {$\dDiff(\ttW_{g+1,1},\partial)_{\sigma}$};
  \node (C) [below of=A] {$\dDiff(\ttW_{g,1},\partial)$};  
  \node (D) [below of=B] {$\dDiff(\ttW_{g+1,1},\partial).$};
  \draw[right hook->] (C) to node  {}(D);
  \draw [right hook->] (A) to node {}(C);
  \draw [right hook->] (A) to node {$$} (B);
  \draw [right hook->] (B) to node {$$} (D);
\end{tikzpicture}
\]

 The caveat is the group theoretic lemma  \cite[Lemma 2.5]{wahl2010homological} does not quite apply, because there is no $t\in \dDiff(\tW_{g+1,1},\partial)$ that conjugates $\dDiff(\tW_{g+1,1},\partial)_{\sigma}$ into $\dDiff(\tW_{g,1},\partial)$ fixing $\dDiff(\tW_{g,1},\partial)_{\sigma}$. It turns out, however, that for every finite set of diffeomorphisms $S=\{f_1,\dots,f_n\}$ stabilizing $\sigma$, there exits $t_S\in \dDiff(\tW_{g+1,1},\partial)$ depending on $S$ that conjugates every element of $S$ into $\dDiff(\tW_{g,1},\partial)$. As we shall see, existence of $t_S$ for every finite set $S$ of diffeomorphisms is enough to prove that all cycles in $E^1_{0,\left\lfloor(g-2)/2\right\rfloor}$ die in the $E^{\infty}$-page.
\begin{lem}\label{relative}
For every $\sigma \in \text{\textnormal{C}}_{1}(\bR^{2n-1})$, the following commutative diagram
 \[  
 \begin{tikzpicture}[node distance=1.8cm, auto]
  \node (A) {\small$\BdDiff(\ttW_{g,1},\partial)_{\sigma}$};
  \node (B) [right of=A, node distance=3.2cm] {\small$\BdDiff(\ttW_{g+1,1},\partial)_{\sigma}$};
  \node (C) [below of=A] {\small$\BdDiff(\ttW_{g,1},\partial)$};  
  \node (D) [below of=B] {\small$\BdDiff(\ttW_{g+1,1},\partial)$};
  \draw[right hook->] (C) to node  {}(D);
  \draw [right hook->] (A) to node {}(C);
  \draw [right hook->] (A) to node {$$} (B);
  \draw [right hook->] (B) to node {$$} (D);
\end{tikzpicture}
\]
induces a map of pairs from $(${\small$\BdDiff(\ttW_{g+1,1},\partial)_{\sigma}$},{\small$\BdDiff(\ttW_{g,1},\partial)_{\sigma}$}$)$ to $(${\small$\BdDiff(\ttW_{g+1,1},\partial)$}, {\small$\BdDiff(\ttW_{g,1},\partial)$}$)$, which is homologically trivial in  degrees less than $(g-1)/2$.
\end{lem}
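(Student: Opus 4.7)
The plan is to adapt the factorization lemma of Wahl \cite[Lemma 2.5]{wahl2010homological}, which ordinarily requires the existence of a single group element $t \in G$ conjugating $G_1$ into $G_2$ while fixing $G_0$. No such global $t$ exists here, but by the McDuff trick already used in \Cref{technical} it suffices to produce, for every finite subset $S \subset \dDiff(\ttW_{g+1,1},\partial)_\sigma$, an element $t_S \in \dDiff(\ttW_{g+1,1},\partial)$ adapted to $S$.

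The first step is the construction of $t_S$. By definition, each $f_i \in S$ is the identity on some open neighborhood of $\phi_\sigma(C) \cup \partial\ttW_{g+1,1}$, so there is a common open neighborhood $U \supset \phi_\sigma(C)$ on which every element of $S$ restricts to the identity; consequently each $f_i$ is supported in the closure of $\ttW_{g+1,1}\setminus U$. By Kreck's cancellation theorem (as invoked after \Cref{construction}), this closure is diffeomorphic to $\ttW_{g,1}$. On the other hand, the nonstandard stabilization $ns_f$ embeds $\ttW_{g,1}$ into $\ttW_{g+1,1}$ disjointly from a neighborhood of $\phi_\sigma(C)$, because the connected-sum region used by $ns_f$ lies near the boundary chart $e$, which is disjoint from $\phi_\sigma(C)$. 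An ambient isotopy of $\ttW_{g+1,1}$, produced by the same kind of argument as in the proof of \Cref{orbit} (using \cite[Corollary 4.4]{Galatius-Randal-Williams}), carries a collar neighborhood of the support region of $S$ into the image of $\ttW_{g,1}$; its endpoint $t_S$ then satisfies $t_S f_i t_S^{-1} \in \dDiff(\ttW_{g,1},\partial)$ for every $i$.

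The second step is the factorization argument. Represent $[x]$ by a bar cycle $z \in C_k(\dDiff(\ttW_{g+1,1},\partial)_\sigma)$ with $\partial z \in C_{k-1}(\dDiff(\ttW_{g,1},\partial)_\sigma)$, and let $S$ be the (finite) set of group elements appearing in $z$ and $\partial z$. Choose $t_S$ as above, and let $c_{t_S}$ denote conjugation by $t_S$. Since inner automorphisms act trivially on group homology, there is a chain homotopy $h_{t_S}$ on $C_*(\dDiff(\ttW_{g+1,1},\partial))$ with $\partial h_{t_S} + h_{t_S}\partial = c_{t_S} - \mathrm{id}$, so
\[
z \;=\; c_{t_S}(z) + \partial h_{t_S}(z) + h_{t_S}(\partial z).
\]
By construction $c_{t_S}(z) \in C_k(\dDiff(\ttW_{g,1},\partial))$ is zero in the relative chain complex, and $\partial h_{t_S}(z)$ is a relative boundary; hence the image of $[x]$ in $H_k(\BdDiff(\ttW_{g+1,1},\partial),\BdDiff(\ttW_{g,1},\partial))$ is represented by $h_{t_S}(\partial z)$ and depends only on $[\partial z] \in H_{k-1}(\dDiff(\ttW_{g,1},\partial)_\sigma)$, giving the non-canonical factorization advertised in the text.

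The third step is to deduce vanishing. The connecting homomorphism $\partial$ of the pair sends the relative class of $h_{t_S}(\partial z)$ to $[c_{t_S}(\partial z)] - [\partial z] \in H_{k-1}(\dDiff(\ttW_{g,1},\partial))$. Conjugation by $t_S$ is inner in the ambient group $\dDiff(\ttW_{g+1,1},\partial)$, so these two classes are equal after pushing forward to $H_{k-1}(\dDiff(\ttW_{g+1,1},\partial))$; by the inductive hypothesis combined with \Cref{StabilizationMap}, the stabilization map is an isomorphism on $H_{k-1}$ whenever $k-1 < (g-3)/2$, and this transfers the equality back to $H_{k-1}(\dDiff(\ttW_{g,1},\partial))$. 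Thus the connecting image vanishes, so the relative class lies in the image of $H_k(\BdDiff(\ttW_{g+1,1},\partial)) \to H_k(\BdDiff(\ttW_{g+1,1},\partial),\BdDiff(\ttW_{g,1},\partial))$; a further application of the same finite-support conjugation trick (now to an absolute cycle in $\dDiff(\ttW_{g+1,1},\partial)$) shows this image also vanishes in the stated range. The main obstacle will be arranging the chain homotopy $h_{t_S}$ and the isotopy producing $t_S$ coherently enough for the non-canonical factorization to close up, and carefully tracking indices so that the inductive stability range $k < (g-1)/2$ really does suffice at every step where induction is invoked.
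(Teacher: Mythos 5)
Your Steps 1 and 2 reproduce, in slightly different notation, the paper's Step 1: your chain homotopy $h_{t_S}$ is exactly the $(-)\times t$ construction, and your $t_S$ is the paper's $t$; the factorization of the image of $[x]$ through $H_{k-1}(\dDiff(\ttW_{g,1},\partial)_\sigma)$ is established the same way in both. The divergence is in Step 3, and that is where there is a genuine gap. You reroute the rest of the argument through the long exact sequence of the pair $(G, G_2)$ with $G = \dDiff(\ttW_{g+1,1},\partial)$ and $G_2 = \dDiff(\ttW_{g,1},\partial)$: you show the connecting image of the relative class vanishes (which is fine, granting the inductive injectivity of $H_{k-1}(G_2)\to H_{k-1}(G)$), and then assert that the remaining absolute class in $H_k(G)$ can be killed by "a further application of the same finite-support conjugation trick." That last assertion does not follow. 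The conjugation trick you built in Step 1 uses, essentially, that each diffeomorphism in the cycle fixes a neighbourhood of the embedded core $\phi_\sigma(C)$; this is what lets you isotope the connected-sum region $H$ into that fixed neighbourhood and produce $t_S$ with $t_S^{-1} f t_S \in G_2$. An absolute lift of your relative class is a cycle of the form $z - u$ where $u \in C_k(G_2)$ is chosen with $\partial u = \partial z$, and the entries of $u$ are arbitrary elements of $G_2$ with no control on their supports relative to $\phi_\sigma(C)$. Conjugating by $t_S$ then sends the entries of $u$ (which already lie in $G_2$) to elements $t_S^{-1} g t_S$ whose supports hit $t_S^{-1}(H)$, i.e.\ out of $G_2$ — so $c_{t_S}(z-u)$ is not a chain in $C_k(G_2)$. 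In short, for arbitrary absolute cycles in $G$ there is no reason a conjugating element into $G_2$ exists, and asserting otherwise would amount to the surjectivity statement at degree $k$ you are ultimately trying to prove.

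The paper avoids this by never lifting to an absolute class: instead it replaces $\partial z$ by a homologous cycle $y$ whose entries fix a second disjoint core $\phi_{\sigma'}(C)$ (this is where the inductive hypothesis is actually invoked, on the stabilizer subgroups of smaller genus), then uses the explicit computation of $d(y\times t\times t')$ to show $[y\times t] = [t'^{-1}(y\times t)t']$ in $H_k(G,G_2)$, and finally produces a second conjugating element $t'$ — with support around $\partial\WW \cup H \cup \phi_{\sigma'}(C)$ — which fixes the $y_{j,i}$ pointwise and carries $t$ into $G_2$, so that $t'^{-1}(y\times t)t'$ is visibly a chain in $C_k(G_2)$. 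This second conjugation $t'$ is the missing ingredient in your write-up; it is not merely a matter of "arranging coherently" or "tracking indices," but a separate geometric construction requiring the second disjoint core. (There is also a sign slip in your chain-homotopy identity — the standard convention gives $z = c_{t_S}(z) - \partial h_{t_S}(z) - h_{t_S}(\partial z)$ — but that is cosmetic and does not affect the substance.)
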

\begin{proof}
 We fix a class $[x]\in H_k(\dDiff(\tW_{g+1,1},\partial)_{\sigma},\dDiff(\tW_{g,1},\partial)_{\sigma})$ for $k\leq (g-2)/2$.
 We show that we can find a cycle representative for $[x]$ so that its image in {\small$H_k(\dDiff(\tW_{g+1,1},\partial),\dDiff(\tW_{g,1},\partial))$} factors through a class {\small$[y]\in H_{k-1}(\dDiff(\tW_{g,1},\partial)_{\sigma})$}. We then use induction to find a cycle representative for $[y]$ in order to show the image of $[y]$  in {\small$H_k(\dDiff(\tW_{g+1,1},\partial),\dDiff(\tW_{g,1},\partial))$} is zero.
 
$\textbf{Step 1:}$ First we find a ``good" cycle representative for $[x]$ to show that its image in $E^{\infty}$-page factors through  {\small$ H_{k-1}(\dDiff(\tW_{g,1},\partial)_{\sigma})$}.  For brevity, we shall write  $G_0, G_1, G_2$ and $G$ to denote the groups {\small$\dDiff(\tW_{g,1}, \partial)_{\sigma}, \dDiff(\tW_{g+1,1},\partial)_{\sigma},\allowbreak  \dDiff(\tW_{g,1},\partial)$} and {\small$\dDiff(\tW_{g+1,1},\partial)$} respectively.  A class $[x]$ in $H_k(G_1,G_0)$ is represented in the homogenous chain as a finite sum $\sum_ia_i(x_{0,i},x_{1,i}, \dots, x_{k,i})$ where $x_{j,i}\in G_1, a_i\in\bZ$ and the boundary $dx$ is a chain in $G_0$. We choose $\epsilon$ so that all $x_{j,i}$'s fix the $\epsilon$-collar.  Note that by \Cref{collar}, we can choose $x_{j,i}$'s to be the identity in an arbitrary collar neighborhood of  the boundary. The idea is to find $t\in G$ so that $t^{-1}x_{j,i}t\in G_2$ for all $i,j$. The diffeomorphism $x_{j,i}$ fix a neighborhood of $\phi_{\sigma}(C)$. We want to find a conjugation $t^{-1}x_{j,i}t$ that fixes $H\subset \W\natural\tW_{1,1}$.
 
 Let $e'$ be an orientation preserving embedding of  $[0,1]\times D^{2n-1}$ into $\tW_{g+1,1}\cong\W\natural\tW_{1,1}$ $$e':[0,1]\times D^{2n-1}\hookrightarrow \W \displaystyle\cup H= \W\natural\tW_{1,1}$$ such that  $ e'(0,D^{2n-1})=c(0,D^{2n-1})$ and $e'(1,D^{2n-1})=e(0,D^{2n-1})$.  We choose $e'$ in such a way that its image is in the $\epsilon$-collar neighborhood of the boundary. Let $\phi_0$ be a germ of an embedded core whose image lies in $$e'([0,1]\times D^{2n-1})\cup H$$ and it is in the same orbit as $\phi_{\sigma}$ i.e. $I(\phi_{\sigma})=I(\phi_0)$. Let $U$ be an open neighborhood of $\partial\W\cup H\cup \phi_{\sigma}(C)$ whose closure is diffeomorphic to $\tW_{2,2}$. Similar to \Cref{orbit}, we can use \cite[Corollary 4.4]{Galatius-Randal-Williams}, to find a diffeomorphism $t\in G$ that sends $[\phi_0]$ to $[\phi_{\sigma}]$ and whose support is in $U$. Hence, Let  $N$ and $N_0$ be subsets of $U$ and open neighborhoods of $\phi(C)$ and $\phi_0(C)$ respectively such that their closures are diffeomorphic to $H$. Let $t\in G$ be a diffeomorphism whose support is in $U$ and  $t(N_0)=N$. Since $H\backslash H\cap N_0$ is in a collar neighborhood of the boundary of $\W\natural \tW{1,1}$, by \Cref{collar} we may assume that $x_{j,i}$ are chosen for all $j,i$ so that $x_{j,i}$ fixes $t(H\backslash H\cap N_0)$ i.e.
 \[
t( H\backslash H\cap N_0)\subset \tW_{g+1,1}\backslash\text{support}(x_{j,i}).
 \]
 \begin{figure}[ht]
 \
\begin{tikzpicture}[scale=.3]

\begin{scope}[shift={(6,0)}]
\draw[line width=1.05pt] [dashed] (0,.-2.5) arc (-90:90:0.5 and 2.5);
\draw [line width=1.05pt] (0,2.5) arc (90:270:0.5 and 2.5);
\draw [line width=1.05pt] (0,0)+(0,-2.5) arc (-90:90:10.5 and 2.5);
\draw  [red, ultra thick](6,-0.37) arc (90:-90: 0.3 and .6);
\draw  [red, ultra thick, dashed](6,-0.37) arc (90:270: 0.3 and .6);
\draw  [red, ultra thick] (6,0) ellipse (2.1 and .9);
\draw  [red, ultra thick] (-0.5,0) to [out= 20, in=140] (2.5,0);
\draw  [red, ultra thick] (2.5,0) to [out= -40, in=190] (3.9,0);
\draw  [line width=1.05pt] (-.2,2.17)--(-2,2.17);
\draw  [line width=1.05pt] (-.35,1.5)--(-2.3,1.5);
\draw[line width=1.05pt] (-2,2.17) arc (90:270:0.3 and 1);
\draw[line width=1.05pt] (-2,2.17) arc (90:-90:0.3 and 1);
\draw[line width=1.05pt] (-2,2.17) arc (90:270:3 and 1);
\draw [white, thick] (-1.1, 2)--(-2,2);
\draw [white, thick] (-1.1, 1.7)--(-2,1.7);
\draw [white, line width=3.8] (0,1.84)--(-1,1.84);
\begin{scope}[shift={(-5.2,1.1)}, scale=.3]
\draw [line width=1.1pt] (4.6,.5) arc (240:300:2.5 and 6.75);
\draw   [line width=1.05pt] (6.87, 0.13) arc (48:150:1.2 and 0.9);
\end{scope}
\node at (-2.4,-3.3) {\tiny$H$};
\node at (6, -3.4) {\tiny$\W\backslash\text{\textcolor{red}{ an embedded core}}$};
\end{scope}
 \begin{scope}[shift={(-14,0)}]
\draw[line width=1.05pt] [dashed] (0,.-2.5) arc (-90:90:0.5 and 2.5);
\draw [line width=1.05pt] (0,2.5) arc (90:270:0.5 and 2.5);
\draw [line width=1.05pt] (0,0)+(0,-2.5) arc (-90:90:10.5 and 2.5);
\draw  [red, ultra thick](6,-0.37) arc (90:-90: 0.3 and .6);
\draw  [red, ultra thick, dashed](6,-0.37) arc (90:270: 0.3 and .6);
\draw  [red, ultra thick] (6,0) ellipse (2.1 and .9);
\draw  [red, ultra thick] (-0.5,0) to [out= 20, in=140] (2.5,0);
\draw  [red, ultra thick] (2.5,0) to [out= -40, in=190] (3.9,0);
\node at (6, -3.4) {\tiny$\W\backslash \text{\textcolor{red}{ an embedded core}}$};
\end{scope}
 \begin{scope}[shift={(-14,-10)}]
\draw[line width=1.05pt] [dashed] (0,.-2.5) arc (-90:90:0.5 and 2.5);
\draw [line width=1.05pt] (0,2.5) arc (90:270:0.5 and 2.5);
\draw [line width=1.05pt] (0,0)+(0,-2.5) arc (-90:90:10.5 and 2.5);
\node at (6, -3.4) {\tiny$\W$};
\end{scope}

\begin{scope}[shift={(6,-10)}]
\draw[line width=1.05pt] [dashed] (0,.-2.5) arc (-90:90:0.5 and 2.5);
\draw [line width=1.05pt] (0,2.5) arc (90:270:0.5 and 2.5);
\draw [line width=1.05pt] (0,0)+(0,-2.5) arc (-90:90:10.5 and 2.5);
\draw  [line width=1.05pt] (-.2,2.17)--(-2,2.17);
\draw  [line width=1.05pt] (-.35,1.5)--(-2.3,1.5);
\draw[line width=1.05pt] (-2,2.17) arc (90:270:0.3 and 1);
\draw[line width=1.05pt] (-2,2.17) arc (90:-90:0.3 and 1);
\draw[line width=1.05pt] (-2,2.17) arc (90:270:3 and 1);
\draw [white, thick] (-1.1, 2)--(-2,2);
\draw [white, thick] (-1.1, 1.7)--(-2,1.7);
\draw [white, line width=3.8] (0,1.84)--(-1,1.84);
\begin{scope}[shift={(-5.2,1.1)}, scale=.3]
\draw [line width=1.1pt] (4.6,.5) arc (240:300:2.5 and 6.75);
\draw   [line width=1.05pt] (6.87, 0.13) arc (48:150:1.2 and 0.9);
\end{scope}
\node at (6, -3.4) {\tiny$\tW_{g+1,1}$};

\end{scope}
\draw [right hook->,line width=1.05pt] (-3,0)--(1.5,0);
\draw [right hook->,line width=1.05pt] (-8.3,-4.2)--(-8.3,-6.9);
\draw [right hook->,line width=1.05pt] (8,-4.2)--(8,-6.9);
\draw [right hook->,line width=1.05pt] (-3,-10)--(1.5,-10);

\end{tikzpicture}  
 \caption{ Cartoon of a diagram that induces the maps in the diagram of  \Cref{relative} for  $n=1$}
  
 \end{figure}
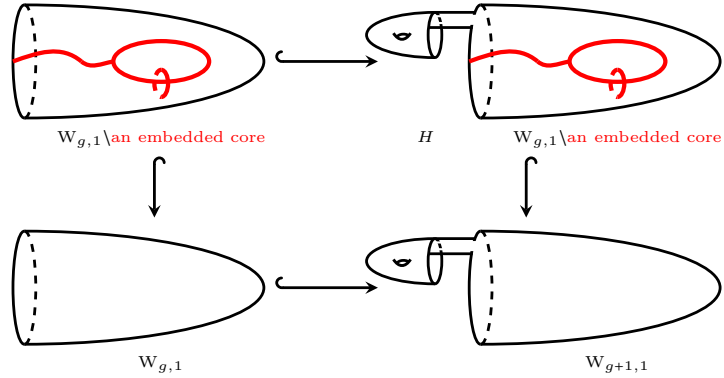

 With these choices, it is easy to see that for all $i$ and $j$, we have $t^{-1}x_{j,i}t\in G_2$ i.e. $t^{-1}x_{j,i}t$ fixes $H\subset \tW_{g+1,1}$. Note that $x_{j,i}$ fix the germ of $\phi(C)$, hence one can choose $N$ so that all $x_{j,i}$ fix $N$. By the choice of $t$, the element $t^{-1}x_{j,i}t$ is identity on $N_0$, and because $x_{j,i}$ fixes $t(H\backslash H\cap N_0)$, the element $t^{-1}x_{j,i}t$ is also identity on $H\backslash H\cap N_0$. Hence $t^{-1}x_{j,i}t\in G_2$ for all $i$ and $j$. 
 
 Following the notation of \cite[Lemma 2.5.]{wahl2010homological}, we write $(x_{0,i}, x_{1,i},\dots, x_{k,i})\times t$ to denote  the $(k+1)$-chain given by the following linear combination
  \[
  (x_{0,i}, x_{1,i},\dots, x_{k,i}, t) + \sum_{j=0}^k(x_{0,i}, x_{1,i},\dots, x_{j-1,i},t, t^{-1}x_{j,i}t, t^{-1}x_{j+1,i}t, \dots, t^{-1}x_{k,i}t)
  \]
  
 It is easy to see $d(x\times t)= (-1)^k x+dx\times t+(-1)^{k+1}t^{-1}xt$. Let $t\in G$ be the diffeomorphism depending on $x$ such that conjugation by $t$ maps all $x_{j,i}$'s to $G_2$. For such $t$, we have $ [t^{-1}xt]=0$ in $H_k(G, G_2)$ that implies the image of the class $[x]$ in  $H_k(G, G_2)$ is equal to $(-1)^{k-1}[dx\times t]$. We obtain a non-canonical factorization of the relative map $H_k(G_1,G_0)\rightarrow H_k(G, G_2)$ via $H_{k-1}(G_0)$.

$\textbf{Step 2:}$ Recall that we want to prove that
 \[
H_k(G_1,G_0)\rightarrow  H_k(G, G_2)
\]
is zero as long as $k\leq (g-2)/2$. Let $[\phi_{\sigma'}]$ be a germ of an embedded core in $\W$ disjoint from $[\phi_{\sigma}]$. By induction, we know for $k\leq (g-4)/2$
\[
H_{k-1}(\dDiff(\tW_{g,1},\partial)_{(\sigma, \sigma')})\rightarrow H_{k-1}(\dDiff(\tW_{g,1},\partial)_{\sigma})=H_{k-1}(G_0)
\]
is an isomorphism where $\dDiff(\tW_{g,1},\partial)_{(\sigma, \sigma')}$ is the stabilizer subgroup fixing $\phi_{\sigma}$ and $\phi_{\sigma'}$. 

Hence,  $dx$ is homologous to a class  {\small$y\in H_{k-1}(\dDiff(\tW_{g,1},\partial)_{(\sigma, \sigma')})$} meaning that it can be represented by a linear combination $k$-tuples of elements in $\dDiff(\tW_{g,1},\partial)$ that fix a neighborhood of $\phi_{\sigma}(C)$ and $\phi_{\sigma'}(C)$. By the \Cref{collar}, we can assume all diffeomorphisms appearing in the cycle representative of $y$ fix the support of $t$. Let $U'$ be a neighborhood of $\phi_{\sigma'}(C)$ that is fixed by all diffeomorphisms appearing in cycle representative of $y$. Since $dy=0$ and  $t^{-1}yt=y$, for any diffeomorphism $t'\in G$, we have
\begin{align*}
d(y&\times t\times t')\\&=(-1)^{k+1}(y\times t) +d(y\times t)\times t' + (-1)^kt'^{-1}(y\times t)t'\\&=(-1)^{k+1}(y\times t) +(-1)^{k}(y\times t')+(dy\times t)\times t'+ (-1)^{k+1}(t^{-1}yt)\times t' +(-1)^kt'^{-1}(y\times t)t'\\&=  (-1)^{k+1}(y\times t) +(-1)^{k}(y\times t')+ (-1)^{k+1}(t^{-1}yt)\times t' +(-1)^kt'^{-1}(y\times t)t'\\&=(-1)^{k+1}(y\times t) +(-1)^kt'^{-1}(y\times t)t'.
\end{align*}
Hence $[x]=[y\times t]=[t'^{-1}(y\times t)t']$ in $H_k(G,G_2)$. To finish the proof, we need to find $t'$ such that  $[t'^{-1}(y\times t)t']=0$ in $H_k(G,G_2)$. Suppose $y=\sum b_i(y_{0,i},\dots,y_{k-1,i}) $ then by definition $t'^{-1}(y\times t)t'$ is
\begin{align*}
\sum_i(t'^{-1}&y_{0,i}t',\dots, t'^{-1}y_{k-1,i}t', t'^{-1}tt') + \\ &\sum_i \sum_{j=0}^{k-1}(t'^{-1}y_{0,i}t',\dots,t'^{-1}tt',t'^{-1} t^{-1}y_{j,i}tt', t'^{-1}t^{-1}y_{j+1,i}tt', \dots, t'^{-1}t^{-1}y_{k,i}tt').
\end{align*}

Similar to step $1$, let $\phi_1$ be a germ of an embedded core whose image lies in $$e'([0,1]\times D^{2n-1})\cup H$$ and it is in the same orbit as $\phi_{\sigma'}$ i.e. $I(\phi_{\sigma'})=I(\phi_1)$. Let $U'$ be an open neighborhood of $\partial\W\cup H\cup \phi_{\sigma'}(C)$ whose closure is diffeomorphic to $\tW_{2,2}$. Similar to step 1, we can use \cite[Corollary 4.4]{Galatius-Randal-Williams}, to find a diffeomorphism $t'\in G$ that maps $[\phi_1]$ to $[\phi_{\sigma'}]$ and whose support is in $U'$. Hence, Let  $N'$ and $N_1$ be subsets of $U'$ and open neighborhoods of $\phi_{\sigma'}(C)$ and $\phi_1(C)$ respectively such that their closures are diffeomorphic to $H$.

 Let $t'\in G$ be a diffeomorphism whose support is in $U'$ and  $t'(N_1)=N'$. Since $y_{j,i}\in \dDiff(\tW_{g,1},\partial)_{(\sigma, \sigma')}$, we can arrange $t'$ so that  $t'^{-1}y_{j,i}t'=y_{j,i}$ for all $i,j$. Therefore to show that $[t'^{-1}(y\times t)t']=0$ in $H_k(G,G_2)$, it is left to prove that $t'$ can be chosen so that $t'^{-1}tt'\in G_2$. But this also can be done similar to step 1. Note that $H\backslash H\cap N_1$ is contained in a collar neighborhood of $\partial\tW_{g+1,1}$. Also note that since $t$ is in  $\dDiff(\tW_{g+1,1},\partial)$, it fixes a collar neighborhood of the boundary. Hence we can choose $N_1$ so that $t'(H\backslash H\cap N_1)$ is fixed by $t$, therefore $t'^{-1}tt'$ fixes $H\subset \W\natural \tW_{1,1}$ i.e. $t'^{-1}tt'\in G_2$. 
\end{proof}
  As always after proving homological stability for a family of groups, the next step is to study the limit. Consider the following space 
  \[
  \mathcal{M} :=\coprod_{g} \BdDiff(\W,\partial).
  \] 
  It is not hard to see that this space is an H-space with an associative and commutative product up to homotopy, but it is not clear to the author whether $\mathcal{M}$ has an $A_{\infty}$-structure. We would like to understand 
  \[
  \textit{hocolim }(\mathcal{M}\xrightarrow{.\tW_{1,1}}\mathcal{M}\xrightarrow{.\tW_{1,1}}\dots)
  \]
 where the product by $.\tW_{1,1}$ is the standard model for the stabilization map. To this end, in the next section we prove that there exists a certain infinite loop space whose homology groups compute the stable homology of $\BdDiff(\W,\partial)$.


\section{Stable moduli of flat bundles}\label{stablehomology}

 In this section, we will prove \Cref{limit homology} and \Cref{splitting}. In order to study the map  
 \[
\iota: \BdDiff(\tW,\partial)\to \BDiff(\tW,\partial)
 \]
 which is induced by the identity homomorphism, Thurston in \cite{thurston1974foliations} studied the homotopy fiber of this map denoted by $\overline{\BDiff(\tW,\partial)}$.  The space $\overline{\BDiff(\tW,\partial)}$ classifies trivial $\tW$-bundles with a foliation transverse to the fibers. Thurston proved a homological h-principle theorem in \cite{thurston1974foliations} that the space $\overline{\BDiff(\tW,\partial)}$ is homology equivalent to a section space of certain fiber bundle over $\tW$. On the other hand, Galatius and Randal-Williams developed the parametrized surgery theory in \cite{galatius2012stable} to study $\BDiff(\tW,\partial)$ for certain manifolds $\tW$. We will combine these two methods to study the stable homology of $\BdDiff(\WW,\partial)$. Let us first briefly digress  to explain Thurston's theorem.
 \subsection{Recollection from foliation theory}  We follow McDuff's exposition (\cite{mcduff1979foliations}) on her joint work with Segal in foliation theory. Let $\tW$ be an $n$-dimensional smooth manifold possibly with boundary. The smooth structure on $\tW$ can be considered as smooth codimension $n$ foliation by points whose normal bundle is the tangent bundle of $\tW$. Hence by Haefliger's theorem \cite{haefliger1971homotopy}, the smooth structure on $\tW$ gives rise to a homotopy commutative diagram
  \[
 \begin{tikzpicture}[node distance=1.8cm, auto]
  \node (A) {$\text{W}$};
  \node (B) [right of=A] {$\mathrm{BGL}^+_{n}(\bR)$};
  \node (C) [above of= B ] {$\mathrm{BS}\Gamma_{n}$};  
   \draw [->] (A) to node {$\tau$}(B);
  \draw [->] (C) to node {$\nu$}(B);
  \draw [->] (A) to node {$f$}(C);
\end{tikzpicture}
 \]
 where $\tau$ classifies the tangent bundle. We want to study the space of lifts of the tangent bundle to $\mathrm{BS}\Gamma_{n}$.  The map $\nu$ might not be a Serre fibration. We turn $\nu$ into a Serre fibration and we denote by $\tau^*(\nu)$ the pullback of this fibration via $\tau$. The space of sections of $\tau^*(\nu)$ is endowed with the compact-open topology and has a base section given by the point foliation. More explicitly, this model for the space of sections is to take  the space of all pairs $(g,h)$ where $g$ is a map $g:\text{W}\rightarrow \mathrm{BS}\Gamma_{n}$ and $h$ is a homotopy from $\tau$ to $\nu\circ g$. The base point for this model is $s_0=(f, h_0)$ where $h_0$ is the trivial homotopy. If $\tW$ has a nonempty boundary, let $\mathcal{Sect}(\text{W},\partial)$ be the space of sections over $\text{W}$ that is equal to the base section $s_0$ in a germ of a collar  of the boundary.  
 
 \begin{thm}[Thurston \cite{mcduff1979foliations}]\label{th}
 There exists a map $$f_{\text{\textnormal{W}}}:  \overline{\BDiff(\text{\textnormal{W}},\partial)}\rightarrow \mathcal{S}(\text{\textnormal{W}},\partial)$$ that induces a homology isomorphism.
\end{thm}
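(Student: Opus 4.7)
The plan is to construct $f_{\text{W}}$ by a tangential-classification procedure and then reduce the homology equivalence to the local Mather--Thurston theorem via a handle-induction argument. First I would define $f_{\text{W}}$ as follows. A flat $\text{W}$-bundle $\pi\colon E\to B$ (with its trivialization near the boundary) carries a codimension-$n$ foliation transverse to the fibers, so that $E$ inherits a Haefliger $\mathrm{S}\Gamma_n$-structure whose normal bundle is canonically isomorphic to the vertical tangent bundle $T^\pi E$. Restricting to a fiber $\text{W}_b \subset E$ yields a Haefliger structure on $\text{W}$ with normal bundle $T\text{W}$, which is precisely the data of a lift $g_b$ of $\tau$ through $\nu$; the canonical isomorphism of normal bundles supplies the accompanying homotopy $h_b$. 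Triviality near $\partial \text{W}$ guarantees that $(g_b,h_b)$ agrees with the base section $s_0$ on a germ of a collar, so we obtain an element of $\mathcal{S}(\text{W},\partial)$. Varying $b$ and applying naturality to universal bundles gives $f_{\text{W}}\colon \overline{\BDiff(\text{W},\partial)}\to \mathcal{S}(\text{W},\partial)$.

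To prove $f_{\text{W}}$ is a homology equivalence, I would argue by induction on a handle decomposition of $\text{W}$. Both sides admit descent: the section functor $\mathcal{S}(-,\partial)$ obviously satisfies a homotopy-pullback decomposition under attaching a handle along a collared piece of the boundary, since sections of a fibration glue over closed covers. On the other side, for a codimension-zero submanifold $\text{W}_1\subset \text{W}$ one has restriction fibration sequences
\[
\overline{\BDiff(\text{W}_1,\partial)}\to \overline{\BDiff(\text{W},\partial)}\to \overline{\BDiff(\text{W},\text{W}_1)},
\]
and an isotopy-extension argument identifies the relative term with one built solely from the attached handle. The map $f_{\text{W}}$ is compatible with these decompositions by naturality, so the five-lemma (for homology, in each stage of the induction) reduces the statement to the base case of a single handle, i.e.\ essentially to $\text{W}=D^n$.

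The main obstacle is this base case, which is the classical Mather--Thurston theorem: the classifying space $B\overline{\mathrm{Diff}^\delta_c(\bR^n)}$ of the homotopy fiber of $\iota$ for a disk is homology equivalent to the space of sections of the corresponding tangential fibration $\tau^*(\nu)$ over $D^n$ relative to the boundary, which up to the tangential twist is $\Omega^n_0 \mathrm{BS}\Gamma_n$. This is the deep foliation-theoretic input, proved through Thurston's jiggling and Mather's cohomological argument; in the spirit of the present paper one may quote it as a black box. The role of the inductive plan above is then to package this local homology equivalence into the global statement about arbitrary $\text{W}$.
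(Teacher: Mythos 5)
The paper does not prove this theorem: it is stated with the attribution ``[Thurston \cite{mcduff1979foliations}]'' and then treated as a black box. The surrounding text only constructs a $\Diff(\text{W},\partial)$-equivariant \emph{model} for the map $f_{\text{W}}$ (via the categorical diagram with $\Gamma(\text{W},\partial)$ and $\mathrm{GL}(\text{W},\partial)$); it does not reprove the homology equivalence. So there is no ``paper's own proof'' to compare against, and your proposal should be judged on its own.

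Your description of $f_{\text{W}}$ is correct and matches the construction the paper gives around diagram~(\ref{thurston}). The globalization strategy—reduce to the local Mather--Thurston theorem for a disc by a handle-by-handle comparison—is also the right high-level plan and roughly how the result is actually globalized in the literature. However, as written, the inductive step has a genuine gap. The sequence
\[
\overline{\BDiff(\text{W}_1,\partial)}\longrightarrow \overline{\BDiff(\text{W},\partial)}\longrightarrow \overline{\BDiff(\text{W},\text{W}_1)}
\]
is \emph{not} a fibration sequence. Isotopy extension holds for the topologized groups $\Diff$, giving a fibration $\Diff(\text{W},\text{W}_1\cup\partial)\to\Diff(\text{W},\partial)\to\Emb^{\partial}(\text{W}_1,\text{W})$, but nothing of the sort is true for the discrete groups $\dDiff$, and the homotopy fiber $\overline{\BDiff}$ does not inherit a compatible fibration from the pair. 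What is true—and what constitutes the real technical content beyond the local theorem—is that the relevant restriction map is a \emph{homology fibration} in the sense of McDuff--Segal; establishing this requires fragmentation of diffeomorphisms (writing compactly supported diffeomorphisms as products of ones supported in small balls) together with a group-completion-type argument. Moreover, even granted homology fibrations on both sides, the ``five-lemma for homology'' step implicitly invokes a comparison of Serre spectral sequences with local coefficients, so one must check that $f_{\text{W}}$ intertwines the two $\pi_1$-actions on the fiber homology. Without the homology-fibration input the induction does not close; adding it essentially reproduces McDuff's argument in \cite{mcduff1979foliations}.
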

 
 Our goal in this section is to describe a model for the map $f_{\text{\textnormal{W}}}$ that is $\Diff(\tW,\partial)$-equivariant. Let us first describe a model for $\mathcal{S}(\text{\textnormal{W}},\partial)$ that admits a $\Diff(\tW,\partial)$-action.  Let $\gamma$ be the tautological bundle  on $\mathrm{BGL}^+_{n}(\bR)$.  Let us assume that we fixed a model for the map  $\nu:\mathrm{BS}\Gamma_{n}\to \mathrm{BGL}_n^+(\bR)$ that is a Serre fibration. Let $\nu^*(\gamma)$ be the pullback bundle  over $\mathrm{BS}\Gamma_{n}$. We denote  by $\text{Bun}_{\partial}(T\tW,{\nu}^*\gamma)$ the space of all bundle maps $T\tW\rightarrow {\nu}^*\gamma$ from the tangent bundle of $\tW$ to ${\nu}^*(\gamma)$ that are standard on a germ of a collar of the boundary and equipped with the compact-open topology (See  \cite[Section 1.1]{randal2009resolutions} for more details). The action of $\Diff(\tW,\partial)$  is given by precomposing a bundle map with the differential of a diffeomorphism.
 
 For a model that $\nu$ is a Serre fibration, one can define a map between $ \mathcal{Sect}(\tW,\partial)$ and $\text{Bun}_{\partial}(T\tW,{\nu}^*\gamma)$ as follows. First fix an isomorphism between $T\tW$ and $\tau^*\gamma$. Every section $s\in  \mathcal{Sect}(\tW,\partial)$ gives a map $s:\tW\to \mathrm{BS}\Gamma_{n}$ such that ${\nu}\circ s=\tau$. Hence, we obtain a bundle map $s^*: T\tW\to {\nu}^*\gamma$. It is easy prove that  the map that associates a bundle map to a section
 \[
\alpha: \mathcal{Sect}(\tW,\partial)\to \text{Bun}_{\partial}(T\tW,{\nu}^*\gamma)
 \]
 is a weak homotopy equivalence.

  The map $f_{\text{\textnormal{W}}}$ in \Cref{th} is roughly described as follows.  Let $\tW\hcoker \Diff(\tW,\partial)$ be the homotopy quotient of the action of $\Diff(\tW,\partial)$ on $\tW$ which is the canonical manifold bundle over $\BDiff(\tW,\partial)$ with fiber $\tW$. If we pull this bundle back to $\BdDiff(\tW,\partial)$, its structural group is discrete, so we may consider it as a foliated bundle (i.e. there is a Haefliger structure of codimension $n$ transverse to the fibers). If we further pull it back to the space $\overline{\BDiff(\tW,\partial)}$, we obtain a trivial fibration $\overline{\BDiff(\tW,\partial)}\times \tW$. Hence, on this bundle there is a canonical Haefliger structure of codimension $n$ transverse to the fibers $\{ x\}\times \tW$. The normal bundle of this Haefliger structure is induced by the tangent bundle $T\tW$. Therefore, this structure is classified by a homotopy commutative diagram 
   \begin{equation}\label{thurston}
   \begin{gathered}
 \begin{tikzpicture}[node distance=2.9cm, auto]
  \node (A) {$\text{W}$};
  \node (B) [right of=A] {$\mathrm{BGL}_{n}^+(\bR)$};
  \node (C) [above of= B, node distance=1.8cm ] {$\mathrm{BS}\Gamma_{n}$};  
  \node (D) [above of= A, node distance=1.8cm] {$ \overline{\BDiff(\tW,\partial)}\times \text{W}$};
   \draw [->] (A) to node {$\tau$}(B);
  \draw [->] (C) to node {$\nu$}(B);
  \draw [->] (D) to node {$pr_2$}(A);
  \draw [->] (D) to node {$F$} (C);
\end{tikzpicture}
\end{gathered}
\end{equation}
where $pr_2$ is the projection to the second factor. Given a choice of homotopy $H$ that makes the above diagram commutative, for each point $x \in \overline{\BDiff(\tW,\partial)}$, the map $F|_{x\times \tW}$ gives a homotopy lift of $\tau$ to $\mathrm{BS}\Gamma_n$ with a choice of homotopy $H|_{x\times \tW}$. Thus, this construction defines a map
\[
f_{\tW}: \overline{\BDiff(\tW,\partial)}\to \mathcal{Sect}(\tW,\partial).
\]

 We now describe certain categorical models for $\overline{\BDiff(\tW,\partial)}$, $\mathrm{BS}\Gamma_{n}$ and $\mathrm{BGL}_{n}^+(\bR)$ in order to have a $\Diff(\tW,\partial)$-equivariant model for the map $f_{\tW}$.
  \begin{defn}
Let $\mathcal{M}$ be a topological monoid that acts on a space $X$ from the left.  We shall write $\mathcal{C}(\mathcal{M}\hker X)$ to denote the topological category whose space of objects is $X$ and whose space of morphisms is $\mathcal{M}\times X$, where $(m,x)$ corresponds to the morphism $x\rightarrow mx$. 
\end{defn}
Our model for $\overline{\BDiff(\tW,\partial)}$ is the fat realization of $\mathcal{C}(\dDiff(\tW,\partial)\hker \Diff(\tW,\partial))$ which also admits an action of $\Diff(\tW,\partial)$ from the right. 
\begin{defn} Let $\Gamma(\tW,\partial)$ be the topological category whose space of objects is $\text{int}(\tW)=\tW\backslash \partial\tW$ with its usual topology and whose space of morphisms from $x$ to $y$ is the set of germs of local orientation preserving diffeomorphims of $\tW\backslash\partial\tW$ that send $x$ to $y$. The morphism space of $ \Gamma(\tW,\partial)$ is equipped with the sheaf topology i.e. an open neighborhood of a germ of a diffeomorphism that sends $x$ to $y$ is described as follows: let $U$ and $V$ be open sets containing $x$ and $y$ respectively and let $g:U\to V$ be a local diffeomorphism that sends $x$ to $y$, the set of germs of $g$ at all points in $U$ is an open neighborhood of the germ of $g$ at $x$. 
\end{defn}
It is shown in \cite[Lemma 1]{mcduff1979foliations} that the fat realization of the category $\Gamma(\tW,\partial)$ is homotopy equivalent to $\mathrm{BS}\Gamma_n$.
\begin{defn}
Let $\mathrm{GL}(\tW,\partial)$ be the topological category which also has the interior of the manifold i.e. $\text{int}(\tW)=\tW\backslash \partial\tW$ as the space  of objects, and the morphism space is a bundle over $\tW\times \tW$ whose fiber over $(x,y)$ is the space of all orientation preserving linear isomorphisms with its usual topology from tangent space $T_x$ at the point $x$, to the tangent space  $T_y$ at the point $y$.
\end{defn}
McDuff also showed in \cite[Lemma 2]{mcduff1979foliations} that the realization of $\mathrm{GL}(\tW,\partial)$ is homotopy equivalent to $\mathrm{BGL}^+_n(\bR)$. Note that there exists a functor $\tilde{\nu}:\Gamma(\tW,\partial)\rightarrow \mathrm{GL}(\tW,\partial)$ that is the identity on the space of objects and it sends the morphism $f:x\rightarrow y$ to its derivative $\text{d}f|_x:x\rightarrow y$.

Following \cite{mcduff1979foliations}, one can describe the categorical model of the diagram \ref{thurston} as follows. The  group $\dDiff(\tW,,\partial)$ acts on $\Diff(\tW,\partial)\times \tW$ from the left by letting $g\in \dDiff(\tW,\partial)$ act as $g: (f,x)\rightarrow (gf,x).$ Hence, there exists a functor 
\[\tilde{F}: \mathcal{C}(\dDiff(\tW,\partial)\hker \Diff(\tW,\partial)\times \tW)\rightarrow \Gamma(\tW,\partial)\]
that takes the object $(f,x)$ to $f(x)$ and the morphism $g:(f,x)\rightarrow (gf,x)$ to the germ of $g$ at $f(x)$. The following diagram is a model for the categorification of the diagram \ref{thurston}
\begin{equation}\label{categorification}
  \begin{gathered}
 \begin{tikzpicture}[node distance=3.7cm, auto]
  \node (A) {$\mathcal{C}(e\hker \tW)$};
  \node (B) [right of=A] {$\mathrm{GL}(\tW,\partial)$};
  \node (C) [above of= B, node distance=1.8cm ] {$\Gamma(\tW,\partial)$};  
  \node (D) [above of= A, node distance=1.8cm] {$ \mathcal{C}(\dDiff(\tW,\partial)\hker \Diff(\tW,\partial)\times \tW)$};
   \draw [->] (A) to node {$\tilde{\tau}$}(B);
  \draw [->] (C) to node {$\tilde{\nu}$}(B);
  \draw [->] (D) to node {$\tilde{pr_2}$}(A);
  \draw [->] (D) to node {$\tilde{F}$} (C);
\end{tikzpicture}
  \end{gathered}
\end{equation}
where $\mathcal{C}(e\hker \tW)$ is the category that arises from the action of the trivial group $\{ e\}$ on $\tW$, the functor $\tilde{pr_2}$ is induced by the obvious projection, and $\tilde{\tau}$ are induced by the identity map on the space of objects. Note that $\tilde{\tau}\circ \tilde{pr_2}\neq \tilde{\nu}\circ \tilde{F}$. However, there exists a natural transformation $\tilde{H}: \tilde{\nu}\circ \tilde{F} \rightarrow \tilde{\tau}\circ \tilde{pr_2}$ that sends the object $(f,x)$ of $\mathcal{C}(\dDiff(\tW,\partial)\hker \Diff(\tW,\partial)\times \tW)$ to the morphism of the category $\mathrm{GL}(\tW,\partial)$ as follows
\[
\text{d}f^{-1}|_{f(x)}: f(x)=\tilde{\nu}\circ \tilde{F}(f,x) \rightarrow x=\tilde{\tau}\circ \tilde{pr_2}(f,x).
\]

Since natural transformations induce homotopies after  realization,  the realization of the diagram \ref{categorification} similar to the diagram \ref{thurston} provides us with  a map $f_{\tW}$ from  $|| \mathcal{C}(\dDiff(\ttW,\partial)\hker \Diff(\ttW,\partial))||$, which is our model for $\overline{\BDiff(\tW,\partial)}$, to the section space $\mathcal{S}(\tW,\partial)$. The advantage of this functorial diagram with respect to $\tW$, as we shall see, is it gives a $\Diff(\tW,\partial)$-equivariant model for $f_{\tW}$.
\begin{prop}
The map $$f_{\ttW}: \overline{\BDiff(\ttW,\partial)} \to \mathcal{S}(\ttW,\partial)$$ is $\Diff(\ttW,\partial)$-equivariant. 
\end{prop}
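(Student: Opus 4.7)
The plan is to promote the entire diagram \ref{categorification} to a diagram in the $2$-category of categories equipped with a right $\Diff(\tW,\partial)$-action; each vertex will carry such an action, each of the four functors will be (strictly or pseudo-) equivariant, and the natural transformation $\tilde H$ will be equivariant as well. Since the fat realization functor $||{-}||$ preserves actions and turns natural transformations into homotopies, this promotes the output data, which is exactly what defines $f_{\tW}$, to $\Diff(\tW,\partial)$-equivariant data.

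First I would record the actions. A diffeomorphism $\phi$ acts on the right on $\Diff(\tW,\partial)$ by right multiplication and on $\tW$ by $x\mapsto\phi^{-1}(x)$. On $\Gamma(\tW,\partial)$ it sends an object $y$ to $\phi^{-1}(y)$ and a germ of a local orientation preserving diffeomorphism $h$ at $y$ to the germ of $\phi^{-1}h\phi$ at $\phi^{-1}(y)$; on $\mathrm{GL}(\tW,\partial)$ the analogous formula using $d\phi^{-1}$ and $d\phi$ is used on the morphism space. On the homotopy quotient category $\mathcal{C}(\dDiff(\tW,\partial)\hker\Diff(\tW,\partial)\times\tW)$ the action is the diagonal one, $(f,x)\cdot\phi=(f\phi,\phi^{-1}(x))$, extended to morphisms $g\colon(f,x)\to(gf,x)$ in the obvious way. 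Crucially, since $\dDiff$ acts on the left of the $\Diff$-factor and $\Diff$ on the right, the two actions commute, so this descends to a right action by functors on the homotopy quotient; the analogous statement for $\mathcal{C}(e\hker\tW)$ is immediate.

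Next I would verify equivariance of the arrows. The functors $\tilde{pr}_2$, $\tilde\tau$, $\tilde\nu$ are equivariant by direct inspection: they are the identity on the underlying set of objects and the action formulas on source and target match. The functor $\tilde F$ sends $(f,x)$ to $f(x)$ and $(f\phi,\phi^{-1}(x))$ to $f\phi(\phi^{-1}(x))=f(x)$; together with $\tilde H$ this gives the coherence data witnessing equivariance of $\tilde\nu\circ\tilde F$ as compared to $\tilde\tau\circ\tilde{pr}_2$. The main computation is equivariance of the natural transformation $\tilde H(f,x)=df^{-1}|_{f(x)}$: under the action, it becomes $d(f\phi)^{-1}|_{f(x)}$, and the chain rule $d(f\phi)^{-1}=d\phi^{-1}\circ df^{-1}$ identifies this with the image of $\tilde H(f,x)$ under the composite of the action on $\mathrm{GL}(\tW,\partial)$ with $\tilde H$ itself. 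Applying $||{-}||$ to the resulting equivariant diagram produces an equivariant homotopy commutative square with specified homotopy, and hence the induced map $f_{\tW}\colon \overline{\BDiff(\tW,\partial)}\to\mathcal{S}(\tW,\partial)$ is $\Diff(\tW,\partial)$-equivariant.

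The main obstacle is bookkeeping: the action formula on $\mathcal{C}(\dDiff\hker\Diff\times\tW)$ has to commute with the left $\dDiff$-action used to form the homotopy quotient, interact correctly with the evaluation functor $\tilde F$, and remain compatible with the chain-rule identity used for $\tilde H$. Once the diagonal formula $(f,x)\cdot\phi=(f\phi,\phi^{-1}(x))$ is pinned down and the corresponding actions on $\Gamma(\tW,\partial)$ and $\mathrm{GL}(\tW,\partial)$ are fixed as above, each verification is a direct application of naturality and the chain rule; no geometry beyond these is needed.
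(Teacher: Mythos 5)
Your proposal takes a genuinely different route from the paper, but it contains a gap that I don't see how to close as stated.

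The paper keeps the actions on $\Gamma(\tW,\partial)$ and $\mathrm{GL}(\tW,\partial)$ \emph{trivial}, because these categories realize to the external classifying spaces $\mathrm{BS}\Gamma_{2n}$ and $\mathrm{BGL}_{2n}^+(\bR)$, which carry no $\Diff(\tW,\partial)$-action in the definition of $\mathcal{S}(\tW,\partial)$ and of $\mathrm{Bun}_\partial(T\tW,\nu^*\gamma)$ (there the group acts \emph{only} by precomposition with $d\phi$ on the $T\tW$-side). With that choice, $\tilde{F}$ and $\tilde{pr}_2$ are strictly equivariant, $\tilde\nu$ is trivially equivariant, and the \emph{single} non-equivariant arrow is $\tilde\tau$; its failure of equivariance is exactly encoded by the natural transformation $x\mapsto dg|_x$, and tracking this through the construction gives $f_{\tW}(g\cdot b)=f_{\tW}(b)\circ dg$ on the nose, which is the wanted equivariance.

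Your proposal instead puts the conjugation action on $\Gamma(\tW,\partial)$ and $\mathrm{GL}(\tW,\partial)$. Two problems result. First, $\tilde F$ is then not equivariant: $\tilde{F}((f,x)\cdot\phi)=f\phi(\phi^{-1}(x))=f(x)$, while $\tilde{F}(f,x)\cdot\phi=\phi^{-1}(f(x))$; you acknowledge this implicitly by invoking ``pseudo-equivariance'' and ``coherence data,'' but you neither write down the natural transformation witnessing it (it would be the germ of $\phi^{-1}$ at $f(x)$) nor verify the coherence conditions. This leak reappears in your ``main computation'': $\tilde{H}((f,x)\cdot\phi)=d(f\phi)^{-1}|_{f(x)}$ is a morphism with \emph{source} $T_{f(x)}$, whereas the conjugated morphism $d\phi^{-1}|_{x}\circ df^{-1}|_{f(x)}\circ d\phi|_{\phi^{-1}(f(x))}$ has source $T_{\phi^{-1}(f(x))}$; these do not agree, so the chain-rule identity does not make $\tilde{H}$ equivariant. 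Second, and more fundamentally, even if the $2$-categorical coherence were fully supplied, the realized equivariance you obtain is with respect to a \emph{nontrivial} (conjugation) action on the models for $\mathrm{BS}\Gamma_{2n}$ and $\mathrm{BGL}_{2n}^+(\bR)$, which is not the action used in the definition of $\mathcal{S}(\tW,\partial)$ or $\mathrm{Bun}_\partial(T\tW,\nu^*\gamma)$. The two actions on the target are only identified up to a (non-equivariant) homotopy coming from the trivialization of the conjugation functor, so you would still owe an argument that this can be upgraded to the strict identification of actions that the proposition and its later use require. The paper's choice of trivial external actions avoids both difficulties at once.
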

\begin{proof}
The group $\Diff(\tW,\partial)$ acts on $\overline{\BDiff(\tW,\partial)}$ by acting on the space of objects of the category $|| \mathcal{C}(\dDiff(\ttW,\partial)\hker \Diff(\ttW,\partial))||$ from the right. Let $b\in \overline{\BDiff(\tW,\partial)}$ and $g\in \Diff(\tW,\partial)$. To prove that $f_{\tW}$ is equivariant, we show that $f_{\tW}(g\cdot b)$ is the bundle map induced by $f_{\tW}(b)$ precomposed with the action of $g$ on the tangent bundle $T\tW$.

 Recall that $f_{\tW}$ is induced by the diagram \ref{categorification} and a canonical homotopy induced by the natural transformation $\tilde{H}$. To determine $f_{\tW}(g\cdot b)$ as a bundle map, we first describe the action of $\Diff(\tW,\partial)$ on the topological categories in  the diagram \ref{categorification}. An element $g\in \Diff(\tW,\partial)$ acts on the space of objects of the category $ \mathcal{C}(\dDiff(\tW,\partial)\hker \Diff(\tW,\partial)\times \tW)$ by sending $(f,x)$ to $(f\cdot g^{-1}, g(x))$ and it acts trivially on its space of morphisms. Let $\Diff(\tW,\partial)$ act on the categories $\Gamma(\tW,\partial)$ and $\mathrm{GL}(\tW,\partial)$ trivially and let it act on the category $\mathcal{C}(e\hker \tW)$ in the obvious way.

 We denote the action of $g$ on $\mathcal{C}(e\hker \tW)$ as a functor $\tilde{i}_g: \mathcal{C}(e\hker \tW)\to \mathcal{C}(e\hker \tW)$. Consider the following diagram of categories
\begin{equation}\label{categorification2}
  \begin{gathered}
 \begin{tikzpicture}[node distance=3.7cm, auto]
  \node (A) {$\mathcal{C}(e\hker \tW)$};
  \node (B) [right of=A] {$\mathrm{GL}(\tW,\partial).$};
  \node (C) [above of= B, node distance=1.8cm ] {$\mathrm{GL}(\tW,\partial)$};  
  \node (D) [above of= A, node distance=1.8cm] {$\mathcal{C}(e\hker \tW)$};
   \draw [->] (A) to node {$\tilde{\tau}$}(B);
  \draw [->] (C) to node {$\text{Id}$}(B);
  \draw [->] (D) to node {$\tilde{i}_g$}(A);
  \draw [->] (D) to node {$\tilde{\tau}$} (C);
\end{tikzpicture}
  \end{gathered}
\end{equation}
This diagram is not commutative but there is a natural transformation $\tilde{H'}$ from $\text{Id}\circ \tilde{\tau}$ to $\tilde{\tau}\circ\tilde{i}_g$. We describe $\tilde{H'}$ as a continuous map from space of objects in $\mathcal{C}(e\hker \tW)$ to space of morphisms in $\mathrm{GL}(\tW,\partial)$ as follows
\[
x\rightarrow \text{d}g|_x.
\]
By geometrically realizing the diagram \ref{categorification2}, we obtain the action of $g$ on the tangent bundle $T\tW$ given by the differential of $g$.

By definition of the map $f_{\tW}$, the map $f_{\tW}(g\cdot\text{---})$ is induced by the diagram
\begin{equation}\label{categorification3}
  \begin{gathered}
 \begin{tikzpicture}[node distance=3.7cm, auto]
  \node (A) {$\mathcal{C}(e\hker \tW)$};
  \node (B) [right of=A] {$\mathrm{GL}(\tW,\partial)$};
  \node (C) [above of= B, node distance=1.8cm ] {$\Gamma(\tW,\partial)$};  
  \node (D) [above of= A, node distance=1.8cm] {$ \mathcal{C}(\dDiff(\tW,\partial)\hker \Diff(\tW,\partial)\times \tW)$};
   \draw [->] (A) to node {$\tilde{\tau}$}(B);
  \draw [->] (C) to node {$\tilde{\nu}$}(B);
  \draw [->] (D) to node {$\tilde{i}_g\circ\tilde{pr_2}$}(A);
  \draw [->] (D) to node {$\tilde{F}$} (C);
\end{tikzpicture}
  \end{gathered}
\end{equation}
and a natural transformation $\tilde{H}_g :\tilde{\nu}\circ \tilde{F} \rightarrow \tilde{\tau}\circ \tilde{i}_g\circ\tilde{pr_2}$.  The natural transformation $\tilde{H}_g$ is induced by the continuous map from the space of objects of the category $\mathcal{C}(\dDiff(\tW,\partial)\hker \Diff(\tW,\partial)\times \tW)$ to the morphism space of $\mathrm{GL}(\tW,\partial)$ given as follows
\[
\text{d}g\circ \text{d}f^{-1}|_{f(x)}: f(x)=\tilde{\nu}\circ \tilde{F}(f, x)\to \tilde{\tau}\circ \tilde{i}_g\circ\tilde{pr_2}(f,x)=g(x).
\]
Hence, the bundle map $f_{\tW}(g\cdot\text{---})$ is the bundle map $f_{\tW}(\text{---})$ precomposed by the action of $g$ on the $T\tW$.
\end{proof}
\subsection{On stable moduli space of  flat $\W$-bundles}
Recall that we want to find a space whose homology is the same as the stable homology of $\BdDiff(\W,\partial)$.  Consider the following fibration sequence
 \[
  \begin{tikzpicture}[node distance=3.2cm, auto]
  \node (A) {$\overline{\text{B}\Diff(\W,\partial)}$};
  \node (B) [right of=A] {$\BdDiff(\W, \partial)$};
  \node (C) [right of= B ] {$\BDiff(\W, \partial).$};  
  \draw [->] (A) to node {$$}(B);
  \draw [->] (B) to node {$\iota$}(C);
\end{tikzpicture}
 \]
Note that an appropriate model for the homotopy fiber of $\iota$ admits an action of the topological group $\Diff(\WW,\partial)$ (e.g. the pullback of the universal $\Diff(\WW,\partial)$-bundle via $\iota$). Hence we have a map
\[
\overline{\text{B}\Diff(\W,\partial)}\hcoker \Diff(\WW,\partial)\xrightarrow{\simeq} \BdDiff(\W, \partial)
\]
which is a weak homotopy equivalence.   Consider the following homotopy commutative diagram 

  \[
 \begin{tikzpicture}[node distance=5.4cm, auto]
  \node (A) {$\overline{\text{B}\Diff(\W,\partial)}$};
  \node (B) [below of=A, node distance=2cm] {$\overline{\text{B}\Diff(\W,\partial)}\hcoker \Diff(\WW,\partial)$};
  \node (C) [below of= B, node distance=2cm ] {$\BDiff(\W, \partial)$};  
  \node (D) [right of= A] {$ \text{\textnormal{Bun}}_{\partial}(T\WW,\nu^*\gamma)$};
    \node (E) [right of= B] {$ \text{\textnormal{Bun}}_{\partial}(T\WW,\nu^*\gamma)\hcoker \Diff(\W, \partial)$};
  \node (F) [right of= C ] {$\BDiff(\W, \partial)$};  
   \draw [->] (A) to node {$f_{\W}$}(D);
  \draw [->] (B) to node {$$}(E);
  \draw [->] (C) to node {$=$}(F);
  \draw [->] (A) to node {$$}(B);
  \draw [->] (B) to node {$$} (C);
 \draw [->] (D) to node {$$} (E);
  \draw [->] (E) to node {$$} (F);

\end{tikzpicture}
\]

Because the equivariant map between fibers is homology isomorphism, by Thurston's theorem, we have the following corollary.

\begin{cor}\label{cor2}
There is a model for the classifying space $\BdDiff(\WW, \partial)$ and a map
\[
\BdDiff(\WW, \partial)\to \text{\textnormal{Bun}}_{\partial}(T\WW,\nu^*\gamma)\hcoker \Diff(\WW, \partial)
\]
that induces a homology isomorphism.
\end{cor}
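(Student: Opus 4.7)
The plan is to assemble the two ingredients already at hand. First, the weak equivalence $\overline{\BDiff(\WW,\partial)}\hcoker\Diff(\WW,\partial) \xrightarrow{\simeq} \BdDiff(\WW,\partial)$ established at the start of this subsection furnishes the model for $\BdDiff(\WW,\partial)$ that I will use. Second, the preceding proposition provides a $\Diff(\WW,\partial)$-equivariant realization of Thurston's map
\[
f_{\WW}: \overline{\BDiff(\WW,\partial)} \longrightarrow \text{Bun}_\partial(T\WW, \nu^*\gamma).
\]
By equivariance, $f_{\WW}$ descends to homotopy quotients to yield a map
\[
\overline{\BDiff(\WW,\partial)}\hcoker\Diff(\WW,\partial) \longrightarrow \text{Bun}_\partial(T\WW, \nu^*\gamma)\hcoker\Diff(\WW,\partial),
\]
and precomposing with the weak equivalence produces the desired map out of $\BdDiff(\WW,\partial)$.

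To see that this map is a homology isomorphism, I would fit it into a morphism of Borel fibration sequences over $\BDiff(\WW,\partial)$ whose fibers are $\overline{\BDiff(\WW,\partial)}$ and $\text{Bun}_\partial(T\WW, \nu^*\gamma)$, respectively, with fiber map precisely $f_{\WW}$. Thurston's theorem (\Cref{th}) asserts that $f_{\WW}$ is a homology equivalence. A Zeeman-style comparison of the two Serre spectral sequences over the common base $\BDiff(\WW,\partial)$ then yields a homology isomorphism on the total spaces, which are the two Borel constructions.

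The only real subtlety is the bookkeeping around local coefficient systems: one must verify that $(f_{\WW})_*$ intertwines the monodromy actions of $\pi_1(\BDiff(\WW,\partial))$ on $H_*(\overline{\BDiff(\WW,\partial)};\bZ)$ and on $H_*(\text{Bun}_\partial(T\WW,\nu^*\gamma);\bZ)$, so that the induced map on $E^2$-pages is an isomorphism of local systems. This is precisely what the $\Diff(\WW,\partial)$-equivariance of $f_{\WW}$ established in the previous proposition guarantees, and is exactly why the equivariant refinement of Thurston's theorem was needed rather than only its statement as a bare homology equivalence.
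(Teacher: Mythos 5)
Your proof is correct and takes essentially the same route as the paper: use the $\Diff(\WW,\partial)$-equivariant model of $f_{\WW}$ to descend to Borel constructions, then compare the two Serre spectral sequences over $\BDiff(\WW,\partial)$ using Thurston's theorem on the fibers. Your explicit remark about the monodromy intertwining is a nice elaboration of why the equivariance is needed (the paper leaves this implicit), but it is not a departure in method.
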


 Hence, to describe the stable homology of $\BdDiff(\WW, \partial)$, as in \Cref{cor2} we replace it with the homotopy quotient of the bundle maps by diffeomorphisms which is a  better object to study from homotopy theory point of view. In fact,  Galatius and Randal-Williams in \cite{galatius2012stable} developed {\it parametrized surgery theory} to study such homotopy quotients. 
 
 Let us recall what the main theorem of Galatius and Randal-Williams in \cite{galatius2012stable} says for certain tangential structures that we are interested in.
\begin{defn} {\it The moduli space of tangential $\nu$-structure} on $\W$ is the space $\text{\textnormal{Bun}}_{\partial}(T\W,\nu^*\gamma)\hcoker \Diff(\W, \partial)$. For brevity, we shall denote this moduli space by $\mathrm{BDiff}^{\nu}(\W,\partial)$.
\end{defn} 
\begin{defn} For a tangential structure $\beta:B\rightarrow \mathrm{BO}(2n)$, the Madsen-Tillmann spectrum  associated to the map $\beta$ is the Thom spectrum of the virtual bundle $\beta^*(-\gamma)$ and is denoted by $\bold{MT}\beta$ (if we want to keep track of the base we denote this spectrum by $B^{-\beta}$). With abuse of notation, we denote the Madsen-Tillmann spectrum associated to $\mathrm{BSO}(2n)\rightarrow \mathrm{BO}(2n)$ by $\mathrm{BSO}(2n)^{-\gamma}$.
\end{defn}
\begin{defn}
Let $B$ be a path connected space. A tangential structure $\beta:B\rightarrow \mathrm{BO}(2n)$ is called {\it spherical} if the sphere $S^{2n}$ admits a $\beta$-structure.
\end{defn}
The main theorem in \cite[Theorem 1.8]{galatius2012stable} that we recall below is about spherical tangential structures and the tangential structure $\nu: \mathrm{BS}\Gamma_{2n}\rightarrow \mathrm{BGL}_{2n}^+(\bR)$ that we are interested in is indeed spherical because the foliation by points on $S^{2n}$ is a codimension $2n$ Haefliger structure whose normal bundle is the tangent bundle of $S^{2n}$, thus it gives a $\nu$-structure on $S^{2n}$.

 Recall from introduction that $\bold{MT}\nu^n$ is the Thom spectrum associated to the tangential structure $\nu^n :  \mathrm{B}\Gamma_{2n}\langle n\rangle\rightarrow \mathrm{BS}\Gamma_{2n}\rightarrow \mathrm{BGL}_{2n}^+(\bR)$ where $\mathrm{B}\Gamma_{2n}\langle n\rangle$ is the $n$-connected cover of $\mathrm{B}\Gamma_{2n}$.
To prove  \Cref{limit homology}, given \Cref{cor2}, we need  to show that 
 there exists a map 
 \[
\mathrm{BDiff}^{\nu}(\W,\partial)\rightarrow \Omega_0^{\infty}\bold{MT}\nu^n
 \]
 which is a homology isomorphism in the stable range.

%

 Let us  fix a $\nu$-structure on $S^{2n-1}$, then one can  define  the moduli space of highly connected bordism as \cite[ Definition 1.4]{galatius2012stable} denoted by $\mathcal{N}^\nu(S^{2n-1})$. With this notation, we have
\[
\mathcal{N}^\nu(S^{2n-1})\simeq\coprod_{\tW}\mathcal{S}(\tW, \partial)\hcoker \Diff(\tW, \partial),
\]
 where the disjoint union is over compact manifolds $\tW$ with $\partial\tW= S^{2n-1}$ such that $(
\tW, S^{2n-1})$ is $(n-1)$-connected, one in each diffeomorphism class.

 Let $\tW_{1,2}\subset [0,1]\times \mathbb{R}^\infty$ be an embedding as a cobordism with collar boundary.  Since $\nu$ is   $(2n+2)$-connected (\cite[Remark 1]{haefliger1971homotopy}), one can choose a $\nu$-structure on $\tW_{1,2}$ extending our chosen $\nu$-structures on $\{ 0\} \times S^{2n-1}$ and $\{ 1\} \times S^{2n-1}$. There exists an induced self-map $ \mathcal{N}^\nu(S^{2n-1})\rightarrow \mathcal{N}^\nu(S^{2n-1})$ defined by taking union with $\tW_{1,2}$ and subtracting $1$ from the first coordinate. Hence, we have the following commutative diagram
\[
 \begin{tikzpicture}[node distance=2.2cm, auto]
  \node (A) {$H_*(\coprod_{g} \BdDiff(\W,\partial);\bZ)$};
  \node (B) [below of=A]{$H_*(\coprod_{g} \BdDiff(\W,\partial);\bZ)$};
  \node (C) [right of=A, node distance=4.5cm]{$H_*(\mathcal{N}^\nu(S^{2n-1});\bZ)$};
  \node (D) [right of=B, node distance=4.5cm]{$H_*(\mathcal{N}^\nu(S^{2n-1});\bZ)$};
  \draw [->] (A) to node {$$} (C);
  \draw [->] (B) to node {$$} (D);
  \draw [->] (A) to node {$\coprod \tW_{1,2}$} (B);
  \draw [->] (C) to node {$\coprod \tW_{1,2}$} (D);
 \end{tikzpicture}
 \]
where the left vertical map is induced by the standard stabilization map and horizontal maps are induced by Thurston's theorem, which is a homology isomorphism. 

\begin{defn}\label{defn1} Assume that $K\subset [0,\infty)\times \mathbb{R}^\infty$ is a submanifold with a $\nu$-structure $l_K$, such that the first coordinate $ x_1: K\rightarrow [0,\infty)$ has the natural numbers as regular values and $ K|_{[i,i+1]}$ is a cobordism such that the pairs $( K|_{[i,i+1]},  K|_i)$ and $ (K|_{[i,i+1]}, K|_{i+1})$ are $(n-1)$-connected for all natural numbers $i$. The definition of the universal $\nu$-end $K$ in \cite[ Addendum 1.9]{galatius2012stable} is equivalent to satisfying  the following conditions:
\begin{itemize}
\item For each integer $i$, the map $\pi_n(K|_{[i,\infty)})\rightarrow \pi_n(\mathrm{BS}\Gamma_{2n})$ is surjective, for all base points in $K$.
\item For each integer $i$, the map $\pi_{n-1}(K|_{[i,\infty)})\rightarrow \pi_{n-1}(\mathrm{BS}\Gamma_{2n})$ is injective, for all base points in $K$.
\item For each integer $i$, each path component of $K|_{[i,\infty)}$ contains a submanifold diffeomorphic to $ S^n\times S^n - \text{int}(D^{2n})$, which in addition has null-homotopic structure map to $\Gamma_{2n}$.

\end{itemize}
\end{defn}
Using the main theorem of \cite[Theorem 1.8]{galatius2012stable} for the map $\nu: \mathrm{BS}\Gamma_{2n}\rightarrow \mathrm{BGL}_{2n}^+(\bR)$, we obtain
\begin{thm}\label{universalend}
 Let $2n>4$ and $(K, l_K)$ be a universal $\nu$-end such that $\mathcal{N}^{\nu}(K|_0,l_K|_0)\neq \emptyset$, then there is a homology equivalence 
 \[
 \text{\it{hocolim}}_{i\rightarrow \infty}\mathcal{N}^{\nu}(K|_i,l_K|_i)\rightarrow \Omega^{\infty} \bold{MT}\eta
 \]
 where $\eta: B'\rightarrow \mathrm{B}\Gamma_{2n}\rightarrow \mathrm{BGL}_{2n}^+(\bR)$ is the $n$th stage of the Moore-Postnikov tower for $l_K: K\rightarrow \mathrm{BS}\Gamma_{2n}$ and $\bold{MT}\eta$ is the Madsen-Tillman spectrum associated to $\eta$.
\end{thm}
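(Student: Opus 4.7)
The strategy is to apply \cite[Theorem 1.8]{galatius2012stable} directly, with the tangential structure taken to be $\nu : \mathrm{BS}\Gamma_{2n} \to \mathrm{BGL}_{2n}^+(\bR)$ (viewed, up to weak equivalence, as a structure over $\mathrm{BSO}(2n) \subset \mathrm{BO}(2n)$, which is the setting of that theorem). That theorem, which is the high dimensional analogue of the Madsen--Weiss theorem developed via parametrized surgery, asserts that for any tangential structure $\theta$ and any universal $\theta$-end $(K,l_K)$ with $\mathcal{N}^{\theta}(K|_0,l_K|_0)\neq\emptyset$, there is a homology equivalence from $\mathrm{hocolim}_i\,\mathcal{N}^{\theta}(K|_i,l_K|_i)$ to $\Omega^{\infty}\bold{MT}\eta$, where $\eta$ is the $n$-th stage of the Moore--Postnikov tower of $l_K$. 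The three input data --- tangential structure, universal $\nu$-end, and non-emptiness of the base-level bordism moduli --- are all supplied by our hypotheses.

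The substance of the proof is therefore the verification that our notion of ``universal $\nu$-end'' matches the one required by \cite{galatius2012stable}. Unpacking: the surjectivity of $\pi_n(K|_{[i,\infty)}) \to \pi_n(\mathrm{BS}\Gamma_{2n})$ ensures that every middle-dimensional tangential homotopy class has a representative available far to the right in $K$; the injectivity of $\pi_{n-1}(K|_{[i,\infty)}) \to \pi_{n-1}(\mathrm{BS}\Gamma_{2n})$ prevents the creation of spurious $(n-1)$-dimensional obstructions upon stabilization; and the presence of null-homotopic $S^n \times S^n - \mathrm{int}(D^{2n})$ pieces in every component of $K|_{[i,\infty)}$ furnishes the ``stabilizing'' bordisms that allow parametrized surgery on middle-dimensional handles to be carried out. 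Together these three conditions translate directly into the universal $\theta$-end axioms of \cite{galatius2012stable}, and the conclusion of their theorem yields our statement verbatim.

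The dimensional hypothesis $2n > 4$ enters exactly where it does in \cite{galatius2012stable}: the parametrized surgery argument requires the Whitney trick to be available for middle-dimensional embedded spheres, which forces $n \geq 3$. A minor bookkeeping point is the identification of the Thom spectrum: since $\nu : \mathrm{BS}\Gamma_{2n} \to \mathrm{BGL}_{2n}^+(\bR)$ is $(2n+2)$-connected, the Moore--Postnikov factorization of $l_K : K \to \mathrm{BS}\Gamma_{2n}$ through its $n$-th stage is equivalent to that of the composition with $\nu$ through its $n$-th stage, so the identification $\eta : B' \to \mathrm{B}\Gamma_{2n} \to \mathrm{BGL}_{2n}^+(\bR)$ and the corresponding Madsen--Tillmann spectrum $\bold{MT}\eta$ are unambiguous. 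The main obstacle I anticipate is simply this matching of definitions; once that is done the theorem is a direct invocation of \cite[Theorem 1.8]{galatius2012stable}.
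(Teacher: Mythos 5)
Your proof matches the paper's approach exactly: the paper gives no independent argument, stating in the sentence preceding the theorem that the result is obtained by applying \cite[Theorem 1.8]{galatius2012stable} to the tangential structure $\nu: \mathrm{BS}\Gamma_{2n}\to\mathrm{BGL}_{2n}^+(\bR)$, which is precisely what you propose. Your closing ``bookkeeping'' remark is unnecessary and not quite correct as stated --- since $\pi_k(\mathrm{BS}\Gamma_{2n})$ and $\pi_k(\mathrm{BGL}_{2n}^+(\bR))$ generally differ for $k>2n+2$, the $n$-th Moore--Postnikov stage of $l_K$ need not agree with that of $\nu\circ l_K$ --- but this causes no trouble because \cite[Theorem 1.8]{galatius2012stable} already takes the Moore--Postnikov stage of $l_K\colon K\to B$ and then post-composes with the structure map, which is exactly the $\eta$ in the theorem's statement.
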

To obtain \Cref{limit homology} from \Cref{universalend},  we first need to choose the following universal $\nu$-end.
\begin{prop}Recall that the point foliation on $\ttW_{1,2}$ gives a $\nu$-structure. Let $( K|_{[i,i+1]}, l_{K|_{[i,i+1]}})$ be $\ttW_{1,2}$ with the chosen $\nu$-structure. Then the pair $(K, l_K)$ is a universal $\nu$-end. \end{prop}
\begin{proof}Note that since $\tW_{1,2}$ is $(n-1)$-connected, it has a  $\theta^n$-structure where $\theta^n:  \mathrm{BGL}_{2n}(\bR)\langle n\rangle\rightarrow \mathrm{BGL}_{2n}(\bR)$.  Using \cite[ Addendum 1.9]{galatius2012stable} it is easy to show that $K$ with its $\theta$-structure is a universal $\theta$-end. Now recall that the map $\nu: \mathrm{BS}\Gamma_{2n}\rightarrow \mathrm{BGL}_{2n}^+(\bR)$ is $(2n+2)$-connected (\cite[Remark 1]{haefliger1971homotopy}). Since $K$ is a universal $\theta$-end, the conditions in \Cref{defn1} hold for $\mathrm{BGL}_{2n}^+(\bR)$, and therefore they also hold for $\mathrm{BS}\Gamma_{2n}$.
\end{proof}
Now given the connectivity of the map $\nu$, it is easy to see that $\nu^n: \mathrm{B}\Gamma_{2n}\langle n\rangle\rightarrow \mathrm{BS}\Gamma_{2n}\rightarrow \mathrm{BGL}_{2n}^+(\bR)$ is the $n$-th stage of the Moore-Postnikov tower for $l_K: K\rightarrow \mathrm{BS}\Gamma_{2n}$.

\vspace{2mm}
\subsection{Stable splitting after the $p$-adic completion}
In order to understand the effect of the map $\BdDiff(\W,\partial)\rightarrow \BDiff(\W,\partial)$ on the level of cohomology in the stable range, we will study the  map
     \[
 \begin{tikzpicture}[node distance=3.2cm, auto]
  \node (A) {$\Omega_0^{\infty}\bold{MT}\nu^n$};
  \node (B) [right of=A] {$\Omega_0^{\infty}\bold{MT}\theta^n,$};
  \draw [->] (A) to node {$$}(B);
\end{tikzpicture}
\]
and we shall prove below that this map is a split surjection after $p$-adic completion (see \cite[Part 3]{may2011more} for a definition of $p$-adic completion of spaces). 
\begin{thm}\label{split}
 The  natural map 
     \[
 \begin{tikzpicture}[node distance=3.2cm, auto]
  \node (A) {$\Omega_0^{\infty}\bold{MT}\nu^n$};
  \node (B) [right of=A] {$\Omega_0^{\infty}\bold{MT}\theta^n$};
  \draw [->] (A) to node {$$}(B);
\end{tikzpicture}
\]
is a split surjection after $p$-adic completion for all primes $p$.
\end{thm}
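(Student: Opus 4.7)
The plan is to construct, after $p$-completion, a section $\sigma: \bold{MT}\theta^n \to \bold{MT}\nu^n$ of the natural map on spectra; applying $\Omega_0^\infty(-)$ then yields the desired split surjection of infinite loop spaces. Both spectra are Thom spectra of the virtual bundle $-\gamma$, pulled back respectively to the $n$-connected covers of $\mathrm{BS}\Gamma_{2n}$ and $\mathrm{BGL}_{2n}^+(\bR)$, and the natural map is induced by the derivative $\nu\langle n\rangle$, which Haefliger proved is $(2n+2)$-connected. In particular, the cofiber of $\bold{MT}\nu^n \to \bold{MT}\theta^n$ is the Thom spectrum of $-\gamma$ over the cofiber of $\nu\langle n\rangle$; this high connectivity is not by itself sufficient to produce a splitting in all degrees.

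I would construct $\sigma$ as a Becker--Gottlieb-type stable transfer along the map $\nu\langle n\rangle$. Classical Becker--Gottlieb requires the homotopy fiber to be a compact manifold, but the homotopy fiber of $\nu\langle n\rangle$ is essentially the classifying space for Haefliger structures on $\bR^{2n}$ with trivialized normal bundle, which is infinite-dimensional. To circumvent this, I would work in the $p$-completed stable category and use a parametrized transfer for dualizable maps, along the lines of Malkiewich's work on parametrized transfer maps. Concretely, this amounts to filtering $\mathrm{B}\Gamma_{2n}\langle n\rangle$ by finite-type approximations for which classical transfers are available, applying the transfer levelwise, and verifying compatibility in the colimit after $p$-completion.

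Finally, I would verify that $\sigma$ is indeed a section of the natural map after $p$-completion by computing the composition $(\nu\langle n\rangle)_\ast \circ \sigma$ on $\bF_p$-cohomology. Via the Thom isomorphism, this reduces to showing that the underlying composition
\[
\mathrm{BGL}_{2n}^+(\bR)\langle n\rangle \longrightarrow \mathrm{B}\Gamma_{2n}\langle n\rangle \longrightarrow \mathrm{BGL}_{2n}^+(\bR)\langle n\rangle
\]
induces the identity on $\bF_p$-cohomology. For Becker--Gottlieb-style transfers this composition is multiplication by the Euler characteristic of the fiber, so the key input is that in the refined $p$-adic dualizable setting the relevant fiberwise Euler characteristic is a $p$-local unit, which in turn forces the endomorphism of $\bold{MT}\theta^n{}_p{}^\wedge$ to be an equivalence. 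The main obstacle is the construction of the transfer itself: since the fiber is genuinely infinite-dimensional and non-compact, no classical Becker--Gottlieb transfer exists, and the whole argument rests on the availability of the correct notion of dualizability after $p$-completion. Once the transfer is in hand, the cohomological verification should be routine, and the resulting section on $\Omega_0^\infty$ furnishes the claimed split surjection.
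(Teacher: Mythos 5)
Your proposal correctly identifies the target (a section of $\bold{MT}\nu^n \to \bold{MT}\theta^n$ on spectra after $p$-completion) and correctly identifies the difficulty: the homotopy fiber of $\nu\langle n\rangle$, which is essentially $\overline{\mathrm{B}\Gamma_{2n}}$, is infinite-dimensional and not dualizable in any known sense, so no classical Becker--Gottlieb transfer is available. However, the remedy you then propose --- a parametrized transfer for dualizable maps after $p$-completion, filtering $\mathrm{B}\Gamma_{2n}\langle n\rangle$ by finite-type approximations --- is not a viable route. There is no such finite-type filtration of $\mathrm{B}\Gamma_{2n}\langle n\rangle$: the Haefliger space is not of finite type (e.g.\ it carries uncountably many independent secondary classes like Godbillon--Vey, cf.\ Section 6 of this paper), and there is no dualizability statement for the fiber of $\nu$ after $p$-completion that would make the transfer exist. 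Your proposal therefore punts precisely at the point where the real work is, which you yourself acknowledge: ``the main obstacle is the construction of the transfer itself.''

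The missing idea is to avoid transferring along $\nu$ entirely and instead route through a \emph{compact Lie group}. One factors
\[
(\mathrm{BSO}(2n)^{\delta})^{-\nu\circ\iota} \longrightarrow \mathrm{BS}\Gamma_{2n}^{-\nu} \longrightarrow \mathrm{BSO}(2n)^{-\gamma},
\]
using the inclusion $\iota\colon \mathrm{SO}(2n)^\delta \hookrightarrow \mathrm{S}\Gamma_{2n}$ (a matrix viewed as a germ of a diffeomorphism at the origin). To split $\nu'\circ\iota'$ one takes $N(T)$, the normalizer of the maximal torus in $\mathrm{SO}(2n)$. The fibration $\mathrm{B}N(T) \to \mathrm{BSO}(2n)$ has \emph{compact manifold} fiber $\mathrm{SO}(2n)/N(T)$ of Euler characteristic $1$ (Hopf--Samelson), so the classical Becker--Gottlieb transfer shows $\mathrm{B}N(T)^{-i}\to \mathrm{BSO}(2n)^{-\gamma}$ is a homology equivalence, hence a $p$-complete equivalence. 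One then uses Milnor's theorem that $\mathrm{B}T^\delta \to \mathrm{B}T$ is a $p$-adic equivalence (a special case of \cite[Lemma 3]{milnor1983homology}) and a Serre spectral sequence comparison to conclude $\mathrm{B}(N(T)^\delta)\to \mathrm{B}N(T)$ is a mod $p$ homology isomorphism. Putting these together gives a section of $\nu'$ after $p$-completion, and the passage to the $n$-connected covers and then to infinite loop spaces is handled by pullback and the formal comparison $(\Omega^\infty_0 X)^\wedge_p \simeq \Omega^\infty_0(X^\wedge_p)$. The virtue of this route is that every transfer that appears has a genuine compact manifold fiber, so no speculative dualizability hypotheses are needed.
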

\begin{cor}
The induced map by the identity homomorphism
     \[
 \begin{tikzpicture}[node distance=4.5cm, auto]
  \node (A) {$H^*(\BDiff(\WW,\partial);\bF_p)$};
  \node (B) [right of=A] {$H^*(\BdDiff(\WW,\partial);\bF_p)$};
  \draw [right hook->] (A) to node {$$}(B);
\end{tikzpicture}
\]
is injective provided that $*< (g-2)/2$.
\end{cor}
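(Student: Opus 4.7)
The plan is to fit the map $\iota^*$ into a commutative square of infinite loop spaces and invoke \Cref{splitting} together with homological stability on both sides. Concretely, I would consider the square
\[
\begin{tikzpicture}[node distance=2cm, auto]
  \node (A) {$\BdDiff(\WW,\partial)$};
  \node (B) [right of=A, node distance=3.8cm] {$\BDiff(\WW,\partial)$};
  \node (C) [below of=A] {$\Omega_0^{\infty}\bold{MT}\nu^n$};
  \node (D) [below of=B] {$\Omega_0^{\infty}\bold{MT}\theta^n$};
  \draw [->] (A) to node {$\iota$} (B);
  \draw [->] (A) to node [swap] {$\alpha$} (C);
  \draw [->] (B) to node {$\beta$} (D);
  \draw [->] (C) to node {} (D);
\end{tikzpicture}
\]
where $\alpha$ is the map of \Cref{limit homology}, $\beta$ is the Galatius--Randal-Williams scanning map for the topologised diffeomorphism group, and the bottom arrow is induced by the natural forgetful map of tangential structures $\nu^n \Rightarrow \theta^n$, i.e.\ forgetting the lift of $\tau$ through $\mathrm{BS}\Gamma_{2n}\langle n\rangle$.

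The second step is to verify that this square commutes up to homotopy. This should follow by inspection of the construction of $\alpha$ in \Cref{stablehomology}: by \Cref{cor2} one replaces $\BdDiff(\WW,\partial)$ by the homotopy quotient $\text{\textnormal{Bun}}_\partial(T\WW,\nu^*\gamma)\hcoker \Diff(\WW,\partial)$, after which $\alpha$ is obtained by applying the parametrised surgery theorem \Cref{universalend} to this moduli of $\nu^n$-structured bundle maps. The analogous moduli space with $\nu^*\gamma$ replaced by $\gamma$ over $\mathrm{BGL}_{2n}^+(\bR)\langle n\rangle$ receives a map from $\BDiff(\WW,\partial)$ and scans, in the same way, to $\Omega_0^{\infty}\bold{MT}\theta^n$, giving $\beta$. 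Forgetting the Haefliger lift intertwines these two moduli spaces compatibly with $\iota$ on the source side and with the natural map $\bold{MT}\nu^n \to \bold{MT}\theta^n$ on the target side, so naturality of scanning yields commutativity.

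Finally, passing to $\bF_p$-cohomology: by \Cref{limit homology} together with the universal coefficient theorem, $\alpha^*$ is an isomorphism in degrees $*<(g-2)/{2}$; by the Galatius--Randal-Williams stable homology theorem, $\beta^*$ is also an isomorphism in this range; and by \Cref{splitting} the bottom arrow induces a split injection on $\bF_p$-cohomology. Chasing the square, the composition $\beta^*\circ\iota^*=(\text{bottom})^*\circ\alpha^*$ is the composition of an isomorphism with a split injection, hence injective; since $\beta^*$ is itself an isomorphism in the stable range, $\iota^*$ must be injective there. The only genuine obstacle is the second step---the homotopy-commutativity of the square, since $\alpha$ and $\beta$ are \emph{a priori} built by different constructions (Thurston's equivariant h-principle for $\alpha$, classical parametrised surgery for $\beta$)---after which the corollary follows formally.
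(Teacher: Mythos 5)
Your argument is correct and is the formal argument the paper implicitly relies on (the paper states this corollary as an immediate consequence of \Cref{split} without spelling it out). The homotopy-commutativity that you flag as the one genuine obstacle is in fact guaranteed by construction: \Cref{cor2} produces the map $\alpha$ by identifying $\BdDiff(\WW,\partial)$ homologically with $\Bun_\partial(T\WW,\nu^*\gamma)\hcoker\Diff(\WW,\partial)$ \emph{over} $\BDiff(\WW,\partial)$, and the Galatius--Randal-Williams scanning machinery used to produce both $\alpha$ and $\beta$ is natural in the tangential structure along $\mathrm{BS}\Gamma_{2n}\langle n\rangle\to\mathrm{BSO}(2n)\langle n\rangle$, so the square commutes by naturality rather than by any separate verification. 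One small notational slip: contravariance gives $\iota^*\circ\beta^*=\alpha^*\circ(\text{bottom})^*$ rather than $\beta^*\circ\iota^*=(\text{bottom})^*\circ\alpha^*$ as written, but this does not affect the logic; also note that to pass from the integral homology statement of \Cref{limit homology} to an $\bF_p$-cohomology isomorphism of $\alpha^*$ in degrees $*<(g-2)/2$ one uses, as you say, the universal coefficient theorem, which works because $\alpha_*$ is an integral isomorphism in both degrees $*-1$ and $*$ there.
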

Recall that the map 
  \[
 \begin{tikzpicture}[node distance=2.2cm, auto]
  \node (A) {$\mathrm{BS}\Gamma_{2n}$};
  \node (B) [right of=A] {$\mathrm{BGL}_{2n}^+(\bR)$};
  \draw [->] (A) to node {$\nu$}(B);
\end{tikzpicture}
\]
is induced by the continuous map of topological groupoids $\hat{\nu}:\mathrm{S}\Gamma_{2n}\rightarrow \mathrm{GL}_{2n}(\bR)^+$, where $\hat{\nu}$ sends a germ $f\in \mathrm{S}\Gamma_{2n}$, to its derivative $df$ evaluated at the source of the germ $f$. Furthermore, there is an obvious map $\tilde{\iota}: \mathrm{SO}(2n)^{\delta}\rightarrow \mathrm{S}\Gamma_{2n}$, which assigns to a matrix its germ as a diffeomorphism of $\bR^{2n}$ at $0$. Note that the image of the composite $\hat{\nu}\circ \tilde{\iota}$ is $\mathrm{SO}(2n)$. Thus, we have

 \[
 \begin{tikzpicture}[node distance=2.2cm, auto]
 \node (C) {$\mathrm{BSO}(2n)^{\delta}$};
  \node (A) [right of=C] {$\mathrm{BS}\Gamma_{2n}$};
  \node (B) [right of=A, node distance=3cm] {$\mathrm{BGL}_{2n}^+(\bR)\simeq \mathrm{BSO}(2n)$};
  \draw [->] (A) to node {$\nu$}(B);
  \draw [->] (C) to node {$\iota$}(A);
\end{tikzpicture}
\]
where $\nu\circ\iota$ is homotopic to the map induced by the identity from $\mathrm{SO}(2n)^{\delta}$ to $\mathrm{SO}(2n)$. Hence, we have the following maps between Thom spectra
\[
 \begin{tikzpicture}[node distance=3.2cm, auto]
 \node (C) {$(\mathrm{BSO}(2n)^{\delta})^{-\nu\circ \iota}$};
  \node (A) [right of=C] {$\mathrm{BS}\Gamma_{2n}^{-\nu}$};
  \node (B) [right of=A, node distance= 2.6cm] {$ \mathrm{BSO}(2n)^{-\gamma}.$};
  \draw [->] (A) to node {$\nu'$}(B);
  \draw [->] (C) to node {$\iota'$}(A);
\end{tikzpicture}
\] 

The Milnor conjecture \cite{milnor1983homology} says that for a Lie group $G$, the classifying space ${\mathrm B}G$ and ${\mathrm B}G^{\delta}$ are $p$-adically equivalent. If the Milnor conjecture were known for $\mathrm{SO}(2n)$, the proof of the theorem would be much shorter, because then $ \mathrm{BSO}(2n)$ and $\mathrm{BSO}(2n)^{\delta}$ would be  equivalent after $p$-adic completion and by Thom isomorphism so were $ \mathrm{BSO}(2n)^{-\gamma}$ and $(\mathrm{BSO}(2n)^{\delta})^{-\nu\circ \iota}$. Hence, this equivalence implies $\nu'$ splits after $p$-completion. To prove the theorem, we first show that  $\nu'$ has a section after $p$-adic completion. Because Milnor's conjecture seems to be unknown for real Lie groups, we give a transfer argument to prove that $\nu'$ admits a section after $p$-completion.
 \begin{lem}\label{BO(2n)}
 The  map $\nu'\circ\iota'$  splits after $p$-completion
  \[ 
 \begin{tikzpicture}[node distance=3.2cm, auto]
  \node (A) {$(\mathrm{BSO}(2n)^{\delta})^{-\nu\circ \iota}$};
  \node (B) [right of=A] {$\mathrm{BSO}(2n)^{-\gamma}$};
  \draw [->] (A) to node {$$}(B);
\end{tikzpicture}
\]
i.e. it admits a section after $p$-completion.
 \end{lem}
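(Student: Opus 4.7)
The plan is to construct a section after $p$-completion by routing through the classifying space of the normalizer $N := N_{\mathrm{SO}(2n)}(T)$ of a maximal torus $T \leq \mathrm{SO}(2n)$, combining the Becker--Gottlieb transfer with Milnor's theorem for abelian Lie groups. The point of choosing $N$ rather than $T$ itself is that $\chi(\mathrm{SO}(2n)/N) = 1$, so the transfer can be upgraded to an honest integral section.

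First, the Becker--Gottlieb transfer for the smooth fiber bundle $\mathrm{B}N \to \mathrm{BSO}(2n)$ gives a stable section $\mathrm{tr}\colon \Sigma^{\infty}_+\mathrm{BSO}(2n) \to \Sigma^{\infty}_+\mathrm{B}N$ of the natural map, since the composite is multiplication by $\chi(\mathrm{SO}(2n)/N)=1$. Because $\gamma|_N$ is the pullback of $\gamma$ along $\mathrm{B}N \to \mathrm{BSO}(2n)$, the transfer lifts to Thom spectra of virtual bundles, giving an integral section $\mathrm{tr}'\colon \mathrm{BSO}(2n)^{-\gamma} \to \mathrm{B}N^{-\gamma|_N}$. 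Second, Milnor's theorem \cite{milnor1983homology} for abelian Lie groups says $\mathrm{B}T^{\delta}\to \mathrm{B}T$ is a mod-$p$ equivalence for every prime $p$. Since $N$ is an extension of the finite Weyl group $W=W(D_n)$ by $T$, and $N^{\delta}$ is the analogous extension of $W$ by $T^{\delta}$, the classifying spaces $\mathrm{B}N^{\delta}$ and $\mathrm{B}N$ form a morphism of Serre fibrations over $\mathrm{B}W$ with fiber map $\mathrm{B}T^{\delta}\to \mathrm{B}T$. Comparison of Serre spectral sequences then forces $\mathrm{B}N^{\delta}\to \mathrm{B}N$ to be a mod-$p$ equivalence, and the $\mathbb{F}_p$-Thom isomorphism upgrades this to a mod-$p$ equivalence of Thom spectra $(\mathrm{B}N^{\delta})^{-\gamma|_N}\to \mathrm{B}N^{-\gamma|_N}$; bounded-belowness on both sides promotes it to a $p$-complete equivalence.

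Combining these with the map $(\mathrm{B}N^{\delta})^{-\gamma|_N}\to (\mathrm{BSO}(2n)^{\delta})^{-\nu\circ\iota}$ induced by the inclusion of discrete groups $N^{\delta}\hookrightarrow \mathrm{SO}(2n)^{\delta}$, we form the composite
\[
\mathrm{BSO}(2n)^{-\gamma}\xrightarrow{\mathrm{tr}'} \mathrm{B}N^{-\gamma|_N} \xleftarrow{\simeq_{\hat p}} (\mathrm{B}N^{\delta})^{-\gamma|_N} \to (\mathrm{BSO}(2n)^{\delta})^{-\nu\circ\iota}.
\]
Post-composition with $\nu'\circ\iota'$ collapses to $\mathrm{tr}'$ followed by the natural map $\mathrm{B}N^{-\gamma|_N}\to \mathrm{BSO}(2n)^{-\gamma}$, which is the identity by construction of $\mathrm{tr}'$. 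The main technical obstacle will be verifying that the Becker--Gottlieb transfer genuinely lifts to Thom spectra of virtual bundles pulled back from $\mathrm{BSO}(2n)$ while preserving the section identity at the spectrum level; the remaining ingredients (the fibration comparison for Milnor's equivalence, the $\mathbb{F}_p$-Thom isomorphism, and the bounded-below promotion from mod-$p$ to $p$-complete equivalence) are more standard.
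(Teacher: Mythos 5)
Your proof is correct and follows essentially the same route as the paper: reduce to the normalizer $\mathrm{N}(T)$ of a maximal torus via the Becker--Gottlieb transfer (using $\chi(\mathrm{SO}(2n)/\mathrm{N}(T))=1$), apply Milnor's theorem to $T$, compare Serre spectral sequences over $\mathrm{B}W$ to propagate the $p$-adic equivalence to $\mathrm{B}\mathrm{N}(T)^\delta \to \mathrm{B}\mathrm{N}(T)$, and pass to Thom spectra. One refinement worth noting: you only use the transfer as a \emph{section} of $\mathrm{B}\mathrm{N}(T)^{-\gamma|_N}\to\mathrm{BSO}(2n)^{-\gamma}$, which is exactly what the transfer provides and all the argument needs, whereas the paper asserts that this map is an integral homology isomorphism---a stronger statement that the transfer does not justify and which in fact fails for primes dividing the order of the Weyl group (e.g.\ $H^1(\mathrm{B}\mathrm{N}(T);\bF_2)\neq 0$ while $H^1(\mathrm{BSO}(2n);\bF_2)=0$).
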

 \begin{proof}
 We denote the normalizer of the maximal torus in $\mathrm{SO}(2n)$ by $ \mathrm{N}(T)$. Consider the following homotopy commutative diagram
 \[
 \begin{tikzpicture}[node distance=2cm, auto]
  \node (A) {$\mathrm{BSO}(2n)^{\delta}$};
  \node (B) [below of=A]{$\mathrm{B}(\mathrm{N}(T)^{\delta})$};
  \node (C) [right of=A, node distance=3.5cm]{$ \mathrm{BSO}(2n)$};
  \node (D) [right of=B, node distance=3.5cm]{$ {\mathrm B}(\mathrm{N}(T))$};
  \draw [->] (A) to node {} (C);
  \draw [->] (B) to node {} (D);
  \draw [->] (B) to node {$i^{\delta}$} (A);
  \draw [->] (D) to node {$i$} (C);
 \end{tikzpicture}
 \]
where $i$  is induced by injection of $\mathrm{N}(T)$ into $\mathrm{SO}(2n)$. Similarly, the map $i^{\delta}$ is defined between the same Lie groups made discrete. Similar to \cite[Section 3]{milnor1983homology}, there is a canonical fibration
  \[
 \begin{tikzpicture}[node distance=3.5cm, auto]
  \node (A) {$\mathrm{BSO}(2n),$};
  \node (B) [left of=A] {$i: {\mathrm B}(\mathrm{N}(T))$};
  \draw [->] (B) to node {$$}(A);
\end{tikzpicture}
\]
whose fiber is $\mathrm{SO}(2n)/\mathrm{N}(T)$. Using the Becker-Gottlieb transfer and the theorem of Hopf and Samelson which says that the Euler number of $\mathrm{SO}(2n)/\mathrm{N}(T)$ is one (\cite{MR0006546}), we deduce that the map $i$ induces a surjection on homology  with any coefficient group. Using the Thom isomorphism and the twisted Becker-Gottlieb transfer \cite[Section 3.2]{kashiwabara2014splitting} for the induced map between Thom spectra
  \[
 \begin{tikzpicture}[node distance=3.5cm, auto]
  \node (A) {$ \mathrm{BSO}(2n)^{-\gamma},$};
  \node (B) [left of=A] {$i': {\mathrm B}(\mathrm{N}(T))^{-i}$};
  \draw [->] (B) to node {$$}(A);
\end{tikzpicture}
\]
we have a transfer map $\tau: \mathrm{BSO}(2n)^{-\gamma}\to {\mathrm B}(\mathrm{N}(T))^{-i}$ so that $i'\circ \tau$ induces a homology isomorphism for all coefficient groups. Therefore the transfer map $\tau$ splits off $\mathrm{BSO}(2n)^{-\gamma}$ from the spectrum ${\mathrm B}(\mathrm{N}(T))^{-i}$. Now for a prime $p$, we consider the following diagram between $p$-completed spectra
\begin{equation}
\begin{gathered}
 \begin{tikzpicture}[node distance=2cm, auto]
  \node (A) {$((\mathrm{BSO}(2n)^{\delta})^{-\nu\circ \iota})^{\wedge}_p$};
  \node (B) [below of=A]{$(\mathrm{B}(\mathrm{N}(T)^{\delta})^{-\nu\circ \iota\circ i^{\delta}})^{\wedge}_p$};
  \node (C) [right of=A, node distance=5.2cm]{$ (\mathrm{BSO}(2n)^{-\gamma})^{\wedge}_p$};
  \node (D) [right of=B, node distance=5.2cm]{$( {\mathrm B}(\mathrm{N}(T))^{-i})^{\wedge}_p.$};
  \draw [->] (A) to node {$(\nu'\circ\iota')^{\wedge}_p$} (C);
  \draw [->] (B) to node {$\simeq$} (D);
  \draw [->] (B) to node {$$} (A);
  \draw [->] (C) to node {$\tau^{\wedge}_p$} (D);
 \end{tikzpicture}
 \end{gathered}
 \end{equation}

 If we show the bottom horizontal map is a weak equivalence, we will obtain a section for $\nu'\circ \iota'$ after $p$-adic completion. In order to prove this weak equivalence, it is sufficient to prove that the bottom horizontal map induces an isomorphism on mod $p$ homology  (see \cite[Theorem 11.1.2]{may2011more}). Using the Thom isomorphism, we only need to show that the middle map in the following diagram induces an isomorphism on mod $p$ homology
 
   \[
 \begin{tikzpicture}[node distance=4.1cm, auto]
  \node (A) {${\mathrm B}T^{\delta}$};
  \node (B) [below of=A, node distance=1.8cm] {$\mathrm{B}(\mathrm{N}(T)^{\delta})$};
  \node (C) [below of= B, node distance=1.8cm ] {${\mathrm B}\mathrm{W}$};  
  \node (D) [right of= A] {${\mathrm B}T$};
    \node (E) [right of= B] {$\mathrm{B}(\mathrm{N}(T))$};
  \node (F) [right of= C ] {${\mathrm B}\mathrm{W}$};  
   \draw [->] (A) to node {\tiny{$p$-adic equivalence}}(D);
  \draw [->] (B) to node {$$}(E);
  \draw [->] (C) to node {$=$}(F);
  \draw [->] (A) to node {$$}(B);
  \draw [->] (B) to node {$$} (C);
 \draw [->] (D) to node {$$} (E);
  \draw [->] (E) to node {$$} (F);

\end{tikzpicture}
\]
where $\mathrm{W}$ is the Weyl group of $\mathrm{SO}(2n)$. It is a special case of \cite[Lemma 3]{milnor1983homology} that  $({\mathrm B}T^{\delta})^{\wedge}_p\simeq ({\mathrm B}T)^{\wedge}_p$. Hence, the top horizontal map is a mod $p$ homology isomorphism. The actions of Weyl group $\mathrm{W}$ on the cohomology of fibers with $\bF_p$ coefficients are the same so by comparing the Leray-Serre spectral sequences, we see the middle map is a mod $p$ homology isomorphism. 
 \end{proof}
\begin{proof}[Proof of \Cref{split}]

 First, we show that on the level of spectra, the map $\bold{MT}\nu^n\rightarrow \bold{MT}\theta^n$,  admits a section after the $p$-adic completion. Having splitting on the level of spectra, we then show that the splitting of $\Omega_0^{\infty}\bold{MT}\nu^n$ formally follows from the properties of the $p$-completion. Consider the following diagram of spectra

\[
 \begin{tikzpicture}[node distance=2cm, auto]
  \node (A) {$\bold{MT}\nu^n$};
  \node (B) [below of=A]{$\mathrm{BS}\Gamma_{2n}^{-\nu}$};
  \node (C) [right of=A, node distance=3cm]{$ \bold{MT}\theta^n$};
  \node (D) [right of=B, node distance=3cm]{$ \mathrm{BSO}(2n) ^{-\gamma}.$};
  \draw [->] (A) to node {$\nu''$} (C);
  \draw [->] (B) to node {$\nu'$} (D);
  \draw [->] (A) to node {} (B);
  \draw [->] (C) to node {} (D);
 \end{tikzpicture}
 \]
In \Cref{BO(2n)}, we proved that the composite  
\begin{equation}\label{1}
(\mathrm{BSO}(2n)^{\delta})^{-\nu\circ \iota}\longrightarrow \mathrm{BS}\Gamma_{2n}^{-\nu}\longrightarrow\mathrm{BSO}(2n)^{-\gamma},
\end{equation}
has a section after $p$-adic completion. Hence $\nu'$ also admits a section after $p$-adic completion.

$\textbf{Step 1:}$ We  prove that the map $\nu''$ also has a section after $p$-adic completion. Let $T$ be the maximal torus in $\mathrm{SO}(2n)$ and $\text{N}(T)$ be the normalizer of the torus in $\mathrm{SO}(2n)$. By the same arguments in the proof of \Cref{BO(2n)}, we have the following commutative diagram
\[
 \begin{tikzpicture}[node distance=2cm, auto]
  \node (A) {$(\mathrm{B}(\mathrm{N}(T)^{\delta}))^{\wedge}_p\simeq(\mathrm{B}\text{N}(T))^{\wedge}_p$};
  \node (B) [right of=A, above of=A, node distance=1.7cm]{$(\mathrm{BS}\Gamma_{2n})^{\wedge}_p$};
  \node (C) [right of=A, node distance=4cm]{$ (\mathrm{BSO}(2n))^{\wedge}_p.$};
  \draw [->] (A) to node {$$} (C);
  \draw [->] (B) to node {$$} (C);
  \draw [->] (A) to node {} (B);
 \end{tikzpicture}
\]
Note that there exists the  Becker-Gottlieb transfer for the bottom horizontal map even before $p$-completion. Let   $Y$ and $Y'$ be the homotopy pullbacks in the following diagram
\begin{equation}\label{Y}
  \begin{gathered}
 \begin{tikzpicture}[node distance=2cm, auto]
  \node (A) {$Y$};
  \node (B) [below of=A]{$\mathrm{B}\text{N}(T)$};
  \node (C) [right of=A, node distance=3cm]{$ \mathrm{BSO}(2n)\langle n\rangle$};
  \node (D) [right of=B, node distance=3cm]{$ \mathrm{BSO}(2n).$};
  \node (E) [left of=A]{$Y'$};
  \node (F) [below of=E]{$\mathrm{B}\text{N}(T)^{\delta}$};
  \draw [->] (A) to node {$$} (C);
  \draw [->] (B) to node {$$} (D);
  \draw [->] (A) to node {} (B);
  \draw [->] (C) to node {} (D);
  \draw [->] (E) to node {} (A);
  \draw [->] (E) to node {} (F);
  \draw [->] (F) to node {} (B);
 \end{tikzpicture}
   \end{gathered}
 \end{equation}
The map from $Y$ to $ \mathrm{BSO}(2n)\langle n\rangle$ also admits a Becker-Gottlieb transfer because its fiber is $\mathrm{SO}(2n)/\mathrm{N}(T)$. We show that there exists a map $Y'\rightarrow \mathrm{BS}\Gamma_{2n}\langle n\rangle$, making the following diagram commutative
 \[
 \begin{tikzpicture}[node distance=2cm, auto]
  \node (A) {$Y'$};
  \node (B) [right of=A, above of=A, node distance=1.7cm]{$\mathrm{BS}\Gamma_{2n}\langle n\rangle$};
  \node (C) [right of=A, node distance=4cm]{$ \mathrm{BSO}(2n)\langle n\rangle.$};
  \draw [->] (A) to node {$$} (C);
  \draw [->] (B) to node {$$} (C);
  \draw [->, dotted ] (A) to node {} (B);
 \end{tikzpicture}
\]
Consider the following homotopy commutative diagram
\begin{equation}\label{c1}
\begin{gathered}
 \begin{tikzpicture}[node distance=2cm, auto]
  \node (A) {$\mathrm{BS}\Gamma_{2n}\langle n\rangle$};
  \node (B) [below of=A]{$\mathrm{BS}\Gamma_{2n}$};
  \node (C) [right of=A, node distance=3cm]{$ \mathrm{BSO}(2n)\langle n\rangle $};
  \node (D) [right of=B, node distance=3cm]{$ \mathrm{BSO}(2n)$};
  \node (E) [left of=A, above of=A, node distance=1.1cm]{$Y'$};
  \draw [->] (A) to node {$$} (C);
  \draw [->] (B) to node {$$} (D);
  \draw [->] (A) to node {} (B);
  \draw [->] (C) to node {} (D);
  \draw [->, bend left=15] (E) to node {} (C);
  \draw [->, bend right] (E) to node {} (B);  
 \draw [->, dotted] (E) to node {} (A);
 \end{tikzpicture}
 \end{gathered}
 \end{equation}
where the left bent arrow is given by the composition $Y'\rightarrow \mathrm{B}\text{N}(T)^{\delta}\rightarrow \mathrm{BS}\Gamma_{2n}$. By Haefliger's theorem \cite[Remark 1]{haefliger1971homotopy}, the square is a homotopy pullback square, so the dotted arrow exists up to homotopy. 

Let us, with abuse of notation, denote by $\gamma$ the pullbacks of tautological bundle over $Y$ and $Y'$. And we denote the Thom spectrum of $-\gamma$ over $Y$ and $Y'$ respectively by $Y^{-\gamma}$ and $Y'^{-\gamma}$. 

Since the right vertical map in diagram \ref{Y} is between simply connected spaces, if we take $p$-completion of diagram \ref{Y}, all homotopy pullback squares remain homotopy pullback squares. Given that $(\text{N}(T))^{\wedge}_p\simeq (\text{N}(T)^{\delta})^{\wedge}_p$ and using Thom isomorphism, we have $(Y^{-\gamma})^{\wedge}_p\simeq (Y'^{-\gamma})^{\wedge}_p$. Since the fiber of the map
\[
Y\to \mathrm{BSO}(2n)\langle n\rangle
\]
is a manifold, we have a transfer map $\bold{MT}\theta^n\to Y^{-\gamma}$.
Hence, using this transfer map in the diagram \ref{c1}, we obtain 
\[
 \begin{tikzpicture}[node distance=3.5cm, auto]
  \node (A) {$ (\bold{MT}\theta^n)^{\wedge}_p$};
  \node (B) [right of=A] {$(Y^{-\gamma})^{\wedge}_p\simeq (Y'^{-\gamma})^{\wedge}_p$};
  \node (C) [right of=B, node distance=3.5cm] {$(\bold{MT}\nu^n)^{\wedge}_p,$};
  \draw [->] (A) to node {$$}(B);
  \draw [->] (B) to node {} (C);
\end{tikzpicture}
\]
which provides a section for $\nu''$ after $p$-adic completion.

 $\textbf{Step 2:}$ The last step is to use this section on the level of spectra and prove that it induces a section on the corresponding  infinite loop spaces, i.e. we want to show that the following map has a section
  \[
 \begin{tikzpicture}[node distance=5.4cm, auto]
  \node (B) [right of=A] {$ (\Omega^{\infty}_0\mathrm{BSO}(2n)^{-\gamma})^{\wedge}_p,$};
  \node (A) {$(\Omega^{\infty}_0(\mathrm{BSO}(2n)^{\delta})^{-\nu\circ \iota})^{\wedge}_p$};
  \draw [->] (A.355) to node [swap] {$\nu'\circ \iota'$}(B.185);
  \draw [->,dashed] (B.173) to node [swap] {$s$}(A.5);
\end{tikzpicture}
\]
but this is a consequence of the fact that if $X$ is a spectrum, then $\Omega^{\infty}_0(X^{\wedge}_p)$ is a $p$-completed space and it is weakly equivalent to $(\Omega^{\infty}_0(X))^{\wedge}_p$. Note that homotopy groups of $\Omega^{\infty}_0(X^{\wedge}_p)$ are the positive homotopy groups of $X^{\wedge}_p$ and these groups can be computed by  the following exact sequence
 \[
\begin{tikzpicture}[node distance=3cm, auto]
\node (E) [left of=A, node distance=2.3cm] {$0$};
  \node (A) {$\text{Ext}(\bZ/p^{\infty}, \pi_*(X))$};
  \node (B) [right of=A] {$\pi_*(X^{\wedge}_p)$};
  \node (C) [right of=B] {$\text{Hom}(\bZ/p^{\infty},\pi_{*-1}(X))$};
  \node (K) [right of=C, node distance=2.5cm] {$0,$};
  \draw [->] (A) to node {$$}(B);
  \draw [->] (B) to node {$$}(C);
  \draw [->] (E) to node {} (A);
  \draw [->] (C) to node {} (K);
\end{tikzpicture}
 \] 
where $\bZ/p^{\infty}= \bZ[1/p]/\bZ$. Since the two outer terms are $p$-completed groups, so are the homotopy groups $\pi_*(X^{\wedge}_p)$. Hence, the fact that the homotopy groups of $\Omega^{\infty}_0(X^{\wedge}_p)$ are $p$-completed groups, \cite[Theorem 11.1.1]{may2011more} implies that $\Omega^{\infty}_0(X^{\wedge}_p)$ is a $p$-completed space. Thus, by the universal property of $p$-completion, there exists a map $(\Omega^{\infty}_0(X))^{\wedge}_p\rightarrow \Omega^{\infty}_0(X^{\wedge}_p)$. Given that homotopy groups of $(\Omega^{\infty}_0(X))^{\wedge}_p$ can be obtained by the same exact sequence, we deduce that it has the same homotopy groups as $ \Omega^{\infty}_0(X^{\wedge}_p)$,  hence $(\Omega^{\infty}_0(X))^{\wedge}_p\simeq \Omega^{\infty}_0(X^{\wedge}_p)$. This weak equivalence finishes the proof by providing the following section
 \[
\begin{tikzpicture}[node distance=1.8cm, auto]
  \node (A) {$ \Omega^{\infty}_0((\mathrm{BSO}(2n)^{-\gamma})^{\wedge}_p)$};
  \node (C) [below of=A]{$ (\Omega^{\infty}_0\mathrm{BSO}(2n)^{-\gamma})^{\wedge}_p$};
  \node (B) [right of=A, node distance=5cm] {$\Omega^{\infty}_0((\mathrm{BSO}(2n)^{\delta})^{-\nu\circ \iota})^{\wedge}_p) $};
  \node (D) [right of=C,node distance=5cm] {$(\Omega^{\infty}_0(\mathrm{BSO}(2n)^{\delta})^{-\nu\circ \iota})^{\wedge}_p.$};
  \draw [->] (A) to node {$$}(B);
  \draw [->] (C) to node {$\simeq$}(A);
  \draw [->] (D) to node {$\simeq$} (B);
  \draw [->,dashed] (C) to node {$$} (D);
\end{tikzpicture}
 \] 

\end{proof}
\begin{cor}\label{nontorsion}
The induced  map  \[(\Omega^{\infty}\nu^n)^*: H^*(\Omega_0^{\infty}\bold{MT}\theta^n;\bZ)\rightarrow H^*(\Omega_0^{\infty}\bold{MT}\nu^n;\bZ)\]  is injective. 
\end{cor}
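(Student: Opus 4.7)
The plan is to reduce the integral statement to the mod-$p$ content of \Cref{splitting} at every prime $p$, by means of the universal coefficient theorem. The first preparatory step I would carry out is to verify that $H^k(\Omega_0^{\infty}\bold{MT}\theta^n;\bZ)$ is a finitely generated abelian group for each $k$. Since $\mathrm{BGL}_{2n}^+(\bR)\langle n\rangle$ has the homotopy type of a CW complex of finite type, the Thom spectrum $\bold{MT}\theta^n$ is bounded below of finite type; an Atiyah--Hirzebruch spectral sequence argument then shows that its stable homotopy groups are finitely generated in each degree, and a standard Postnikov tower argument via Serre's mod-$\mathcal{C}$ theory, applied to the nilpotent H-space $\Omega_0^{\infty}\bold{MT}\theta^n$, yields the desired finite generation of the integral cohomology.

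Next, let $\alpha$ lie in the kernel of $(\Omega^{\infty}\nu^n)^*$ in degree $k$. I would use naturality of the universal coefficient short exact sequence
\[
0\to H^k(-;\bZ)\otimes \bF_p\to H^k(-;\bF_p)\to \mathrm{Tor}\bigl(H^{k+1}(-;\bZ),\bF_p\bigr)\to 0
\]
together with the commutative square induced by $\Omega^{\infty}\nu^n$ on mod-$p$ reductions, to conclude that the image of $\alpha$ in $H^k(\Omega_0^{\infty}\bold{MT}\theta^n;\bF_p)$ is killed by the mod-$p$ pullback. By \Cref{splitting} that pullback is injective, so the mod-$p$ reduction of $\alpha$ vanishes; equivalently, $\alpha\otimes 1 = 0$ in $H^k(\Omega_0^{\infty}\bold{MT}\theta^n;\bZ)\otimes\bF_p$, i.e.\ $\alpha\in pH^k(\Omega_0^{\infty}\bold{MT}\theta^n;\bZ)$.

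To finish, since this divisibility holds for \emph{every} prime $p$, and by the first step the torsion-free quotient of $H^k(\Omega_0^{\infty}\bold{MT}\theta^n;\bZ)$ is a free abelian group of finite rank, the image of $\alpha$ in that quotient lies in $\bigcap_{p}p\bZ^r = 0$. Therefore $\alpha$ is torsion, as claimed.

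I expect the principal technical obstacle to be the finite-generation statement in the first step: once it is in hand, the remainder is a formal diagram chase combining \Cref{splitting} with the universal coefficient theorem.
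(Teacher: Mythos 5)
Your proposal is correct and follows essentially the same route as the paper: reduce the integral statement to the mod-$p$ injectivity of Theorem \ref{splitting} via the universal coefficient theorem and the finite generation of $H_*(\Omega_0^{\infty}\bold{MT}\theta^n;\bZ)$. The only cosmetic difference is that you argue directly (divisibility by every prime in a finitely generated group forces torsion), whereas the paper runs the same diagram chase by contradiction, choosing a single prime $p$ at which a putative non-torsion kernel class would survive.
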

\begin{proof}
Suppose we have {\Small $(\Omega^{\infty}\nu^n)^*(a)=0$} for  a non-trivial element $a\in H^*(\Omega_0^{\infty}\bold{MT}\theta^n;\bZ)$. Consider the following commutative diagram between the Bokstein exact sequences
 \[
 \begin{tikzpicture}[node distance=5cm, auto]
  \node (A) {$H^*(\Omega_0^{\infty}\bold{MT}\theta^n;\bZ)$};
  \node (B) [right of=A] {$H^*(\Omega_0^{\infty}\bold{MT}\nu^n;\bZ)$};
  \node (C) [node distance=2cm, below of=A] {$H^*(\Omega_0^{\infty}\bold{MT}\theta^n;\bF_p)$};  
  \node (D) [node distance=2cm, below of=B] {$H^*(\Omega_0^{\infty}\bold{MT}\nu^n;\bF_p),$};
  \node (E) [node distance=2cm, above of=A]{$H^*(\Omega_0^{\infty}\bold{MT}\theta^n;\bZ)$};
    \node (F) [node distance=2cm, above of=B] {$H^*(\Omega_0^{\infty}\bold{MT}\nu^n;\bZ)$};
  \draw [->] (E) to node {$$} (F);
  \draw [->] (E) to node {$\times p$} (A);
  \draw [->] (F) to node {$\times p$} (B);
  \draw[right hook->] (C) to node  {}(D);
  \draw [->] (A) to node {$i$}(C);
  \draw [->] (A) to node {$$} (B);
  \draw [->] (B) to node {$i'$} (D);
\end{tikzpicture}
\]
thus $a\in pH^*(\Omega_0^{\infty}\bold{MT}\theta^n;\bZ)$ for all $p$. Since $H^*(\Omega_0^{\infty}\bold{MT}\theta^n;\bZ)$ is finitely generated in each degree (\cite[Theorem 1.1]{galatius2012stable}), we deduce that $a=0$. 

\end{proof} 
\section{Remarks on characteristic classes of flat $\W$-bundles}\label{remarks}
 The goal of this section is two fold. On the one hand  we study the image of the map
 \[
\iota^*: H^*(\BDiff(\W,\partial);\bZ)\to H^*(\BdDiff(\W,\partial);\bZ).
  \]
 On the other hand we use \Cref{limit homology} and what is known about $H^*(\mathrm{BS}\Gamma_{2n};\bZ)$ to detect non-trivial cohomology classes of $\BdDiff(\W,\partial)$ that are not in the image of $\iota^*$. 
\subsection{On generalized MMM-classes for flat $\W$-bundles}\label{sec6} There are generalized MMM classes $\kappa_c$ in $H^k(\BDiff(\W))$ associated to each $c\in H^{k+2n}(\mathrm{BSO}(2n))$ defined as follows. Consider the universal $\tW_{g}$-bundle
 \[
\begin{tikzpicture}[node distance=1.4cm, auto]
  \node (A) {$\tW_{g}$};
  \node (B) [right of=A] {$E$};
  \node (C) [right of=B, node distance=2.1cm] {$\BDiff(\tW_{g}).$};
  \draw [->] (A) to node {$$}(B);
  \draw [->] (B) to node {$\pi$}(C);
\end{tikzpicture}
 \] 
 The vertical tangent bundle $T_{\pi}E\rightarrow E$ is a $2n$-dimensional bundle over $E$ which restricts to the tangent bundle of each fiber. Thus, to any class $c\in H^{2n+k}(\mathrm{BSO}(2n))$, we can associate a class $c(T_{\pi}E)\in H^{2n+k}(E)$. The fiber is a closed compact manifold, so we can integrate this class along the fiber  and obtain a cohomology class in the cohomology of the base 
 \begin{equation*}
 \kappa_c=\pi_!c(T_{\pi}E)\in H^k(\BDiff(\tW_{g})).
 \end{equation*}
 We can pull back $\kappa_c$ via the natural injection of $\Diff(\W,\partial)\hookrightarrow \Diff(\tW_{g})$ to obtain a cohomology class in $H^k(\BDiff(\W,\partial);\bZ)$. We can further pull it back to $H^k(\BdDiff(\W,\partial);\bZ)$ and denote it by  $\kappa_c^\delta$ .
 
 The following theorem is proved in \cite[Corollary 1.8]{galatius2014homological},
 \begin{thm}\label{rationalcohomology}
 Let $n>2$ and let $\mathcal{B}\subset H^*(\mathrm{BSO}(2n);\bQ)$ be the set of monomials in the classes $e,p_{n-1},\dots,p_{\left \lceil{n+1}/{4} \right\rceil }$, of degrees larger than $2n$ where $e$ is the Euler class and $p_i$ denotes the $i$-th Pontryagin class. Then, the induced map
 \[
 \bQ[\kappa_c | c\in \mathcal{B}]\rightarrow H^*(\BDiff(\WW,\partial);\bQ)
 \]
 is an isomorphism in the range $*\leq (g-3)/2$.
 \end{thm}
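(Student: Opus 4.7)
The plan is to transport the computation from $\BDiff(\W,\partial)$ to the infinite loop space $\Omega_0^\infty\bold{MT}\theta^n$ via the topologized Galatius--Randal-Williams stable moduli theorem (already recalled earlier in this paper), and then to compute the rational cohomology of that infinite loop space directly. Concretely, one has a natural map
\[
\BDiff(\W,\partial)\longrightarrow \Omega_0^\infty\bold{MT}\theta^n
\]
inducing an isomorphism on integral homology in the range $*\le (g-3)/2$, and under this map each MMM class $\kappa_c$ pulls back from the canonical infinite-loop-space class obtained from $c\cdot u$ (where $u$ is the Thom class of $-\gamma$ over $\mathrm{BSO}(2n)\langle n\rangle$) via Thom isomorphism together with the unit $\Sigma^\infty\Omega^\infty\bold{MT}\theta^n\to\bold{MT}\theta^n$. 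Granted this, it suffices to show that $H^*(\Omega_0^\infty\bold{MT}\theta^n;\bQ)$ is a polynomial algebra on these canonical classes indexed by $c\in\mathcal{B}$.

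First I would compute the rational cohomology of the base. Starting from $H^*(\mathrm{BSO}(2n);\bQ)=\bQ[p_1,\dots,p_{n-1},e]$ with $|p_i|=4i$ and $|e|=2n$, pulling back along the $n$-connected cover rationally kills exactly those polynomial generators of degree $\le n$; since $n>2$, the class $p_i$ survives iff $4i>n$, i.e.\ iff $i\ge\lceil(n+1)/4\rceil$, while $e$ always survives. Hence
\[
H^*(\mathrm{BSO}(2n)\langle n\rangle;\bQ)=\bQ[p_{\lceil(n+1)/4\rceil},\dots,p_{n-1},e].
\]
The Thom isomorphism then yields $\tilde H^{*}(\bold{MT}\theta^n;\bQ)\cong H^{*+2n}(\mathrm{BSO}(2n)\langle n\rangle;\bQ)$, so positive-degree rational cohomology classes of $\bold{MT}\theta^n$ correspond bijectively to monomials in $e,p_{\lceil(n+1)/4\rceil},\dots,p_{n-1}$ of total degree strictly larger than $2n$, i.e.\ to the set $\mathcal{B}$. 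Rationally every bounded-below spectrum splits as a product of Eilenberg--MacLane spectra, so
\[
H^*(\Omega_0^\infty\bold{MT}\theta^n;\bQ)\cong \mathrm{Sym}_{\bQ}\!\bigl(\tilde H^{>0}(\bold{MT}\theta^n;\bQ)\bigr).
\]
Because each generator $\kappa_c$ (for $c\in\mathcal{B}$) sits in even degree, the symmetric algebra is an honest polynomial algebra, giving an isomorphism $\bQ[\kappa_c\mid c\in\mathcal{B}]\xrightarrow{\cong}H^*(\Omega_0^\infty\bold{MT}\theta^n;\bQ)$.

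Combining these computations with the stable homology isomorphism delivers the theorem in the claimed range. The step I expect to require the most care is verifying that the polynomial generators constructed abstractly on $\Omega_0^\infty\bold{MT}\theta^n$ pull back under the Madsen--Tillmann--Weiss map to the fiber-integrated MMM classes $\kappa_c$ on $\BDiff(\W,\partial)$ in the explicit sense used in the statement. This is not hard in principle---it follows from the compatibility of the parametrized Pontryagin--Thom collapse with integration along the fiber, together with the identification of the vertical tangent bundle with its canonical $\theta^n$-refinement---but it is the only non-formal ingredient beyond the deep stable moduli theorem itself, and the bookkeeping has to be done carefully.
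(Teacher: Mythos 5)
Your argument is correct and is the standard computation, but the paper does not prove this theorem itself: it records the statement and cites \cite[Corollary 1.8]{galatius2014homological}, where the route you reconstruct---the Galatius--Randal-Williams stable moduli theorem, the Thom isomorphism identifying $\tilde H^{*}(\bold{MT}\theta^n;\bQ)$ with $H^{*+2n}(\mathrm{BSO}(2n)\langle n\rangle;\bQ)=\bQ[e,p_{\lceil(n+1)/4\rceil},\dots,p_{n-1}]$ in shifted degree, and the rational splitting of bounded-below spectra into Eilenberg--MacLane pieces (giving a polynomial algebra since all generators sit in even degree)---is carried out. The only cosmetic slip is that $\Sigma^\infty\Omega^\infty\bold{MT}\theta^n\to\bold{MT}\theta^n$ is the counit, not the unit, of the $\Sigma^\infty\dashv\Omega^\infty$ adjunction.
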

 
  One  consequence of \Cref{nontorsion} and \Cref{rationalcohomology} is the following corollary. 
 \begin{cor}\label{cor4}
For all $c\in \mathcal{B} $, we have a map
\[
\begin{tikzpicture}[node distance=3.7cm, auto]
  \node (A) {$\bZ[\kappa_c^{\delta}|c\in \mathcal{B}]$};
  \node (B) [right of=A] {$H^*(\BdDiff(\ttW_{\infty,1},\partial);\bZ)$};
  \draw [right hook->] (A) to node {$$}(B);
\end{tikzpicture}
\] 
which is injective.
\end{cor}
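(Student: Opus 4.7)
My plan is to derive the corollary from \Cref{rationalcohomology} and \Cref{nontorsion} by a short diagram chase, after noting that by definition $\kappa_c^\delta = \iota^*(\kappa_c)$, so the map in question factors as
\[
\bZ[\kappa_c^\delta \mid c \in \mathcal{B}] \cong \iota^*\bigl(\bZ[\kappa_c \mid c \in \mathcal{B}]\bigr) \subset H^*(\BdDiff(\ttW_{\infty,1},\partial);\bZ),
\]
provided both that $\bZ[\kappa_c]$ embeds in the integral cohomology of $\BDiff(\ttW_{\infty,1},\partial)$ and that $\iota^*$ is injective when restricted to this subring.

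First I would verify the embedding $\bZ[\kappa_c \mid c \in \mathcal{B}] \hookrightarrow H^*(\BDiff(\ttW_{\infty,1},\partial);\bZ)$. By the Galatius--Randal-Williams stability theorem, the cohomology on the right is the inverse limit of the stable cohomologies of $\BDiff(\W,\partial)$, to which each $\kappa_c$ maps compatibly. Applying rationalization, the composition
\[
\bZ[\kappa_c] \hookrightarrow \bQ[\kappa_c] \xrightarrow{\;\cong\;} H^*(\BDiff(\ttW_{\infty,1},\partial);\bQ)
\]
is injective thanks to \Cref{rationalcohomology}, and it factors through $H^*(\BDiff(\ttW_{\infty,1},\partial);\bZ)$; hence the integral map itself is injective, and its image is a torsion-free polynomial subring.

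Next I would identify, via Galatius--Randal-Williams and \Cref{limit homology},
\[
H^*(\BDiff(\ttW_{\infty,1},\partial);\bZ) \cong H^*(\Omega_0^\infty \bold{MT}\theta^n;\bZ), \qquad H^*(\BdDiff(\ttW_{\infty,1},\partial);\bZ) \cong H^*(\Omega_0^\infty \bold{MT}\nu^n;\bZ),
\]
under which $\iota^*$ corresponds to $(\Omega^\infty \nu^n)^*$. By \Cref{nontorsion}, the kernel of this latter map is torsion. Since $\bZ[\kappa_c \mid c \in \mathcal{B}]$ is torsion-free, the restriction of $\iota^*$ to this subring has trivial kernel. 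Consequently $\iota^*$ carries $\bZ[\kappa_c]$ isomorphically onto $\bZ[\kappa_c^\delta]$, and the latter sits injectively inside $H^*(\BdDiff(\ttW_{\infty,1},\partial);\bZ)$.

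I do not foresee a substantive obstacle: the only delicate bookkeeping is the passage between the finite-$g$ stable-range statements of \Cref{MainTheorem} and \Cref{limit homology} and the limit $g = \infty$, but this is routine once homological stability and the Galatius--Randal-Williams theorem are invoked. Everything else is a formal ring-theoretic consequence of the fact that a torsion-free polynomial subring is detected by any map whose kernel is torsion.
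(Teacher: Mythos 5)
Your proposal is correct and takes essentially the same route as the paper, which presents this corollary as an immediate consequence of \Cref{nontorsion} and \Cref{rationalcohomology} without writing out the details: one shows that $\bZ[\kappa_c\mid c\in\mathcal{B}]$ sits as a torsion-free subring of $H^*(\Omega_0^\infty\bold{MT}\theta^n;\bZ)$ using the rational computation, and then the torsion kernel of $(\Omega^\infty\nu^n)^*$ from \Cref{nontorsion} cannot meet this subring. Your unpacking of both steps, including the rationalization argument for the torsion-freeness of the image and the identification of $\iota^*$ with $(\Omega^\infty\nu^n)^*$ via \Cref{limit homology} and the Galatius--Randal-Williams theorem, fills in exactly the bookkeeping the paper leaves implicit.
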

This corollary implies that those monomials of $\kappa_c^{\delta}$ for $c\in \mathcal{B}$ that lie in the stable cohomology of $H^*(\BdDiff(\W,\partial)$ are nontrivial. Using a Lie group action on $\tW_{g}$, one can show that there are nontrivial $\kappa_c^{\delta}$ in the unstable range of $H^*(\BdDiff(\tW_{g});\bZ)$.
\begin{thm}
For  $g>1$ and $i<n$, the class $\kappa_{ep_i}^{\delta}\in H^{4i}(\BdDiff(\text{\textnormal{W}}_g);\bZ)$ is nontrivial.
\end{thm}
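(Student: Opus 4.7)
By construction $\kappa_{ep_i}^{\delta}=\iota^{*}\kappa_{ep_i}$, so it suffices to exhibit a map $\phi\colon X\to \BdDiff(\tW_g)$ along which this class pulls back nontrivially. I will take $X=BT^{\delta}$ for an appropriate torus $T$ that acts smoothly on $\tW_g$, reducing the problem to a localization computation on $BT$ and an application of Milnor's theorem for tori.

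Following the construction in \cite{galatius2015tautological}, for every $g\geq 1$ the closed manifold $\tW_g=\#_g\,S^n\times S^n$ carries a smooth torus action $T\to \Diff(\tW_g)$ with finite fixed-point set. Forgetting topology gives $T^{\delta}\to \dDiff(\tW_g)$ and a homotopy commutative square
\[
\begin{tikzpicture}[node distance=3cm, auto]
  \node (A) {$BT^{\delta}$};
  \node (B) [right of=A, node distance=4cm] {$\BdDiff(\tW_g)$};
  \node (C) [below of=A, node distance=1.7cm] {$BT$};
  \node (D) [below of=B, node distance=1.7cm] {$\BDiff(\tW_g).$};
  \draw[->] (A) to node {$\phi$} (B);
  \draw[->] (C) to node {$\psi$} (D);
  \draw[->] (A) to node {$q$} (C);
  \draw[->] (B) to node {$\iota$} (D);
\end{tikzpicture}
\]
Commutativity gives $\phi^{*}\kappa_{ep_i}^{\delta}=q^{*}(\psi^{*}\kappa_{ep_i})=q^{*}\bigl(\kappa_{ep_i}|_{BT}\bigr)$, so the problem reduces to showing that $\kappa_{ep_i}|_{BT}$ pulls back nontrivially to $BT^{\delta}$.

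The class $\kappa_{ep_i}|_{BT}$ can be computed by equivariant Atiyah--Bott localization on the Borel fibration $ET\times_T\tW_g\to BT$. Since the $T$-fixed points $p\in \tW_g^{T}$ are isolated and $ep_i(T_p\tW_g)=e(T_p\tW_g)\cdot p_i(T_p\tW_g)$ as equivariant classes, the localization formula gives
\[
\kappa_{ep_i}|_{BT}\;=\;\sum_{p\in \tW_g^{T}} p_i(T_p\tW_g)\;=\;\sum_{p\in\tW_g^{T}}\sigma_i\bigl((\alpha_1^{(p)})^{2},\ldots,(\alpha_n^{(p)})^{2}\bigr)\in H^{4i}(BT;\bZ),
\]
where $\alpha_j^{(p)}\in H^{2}(BT;\bZ)$ are the weights of $T$ on $T_p\tW_g$. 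For the Galatius--Randal-Williams action and $g>1$, the weights vary enough among the fixed points that the right-hand side is a nonzero polynomial for every $i<n$; in particular it is nonzero modulo some prime $p$. By Milnor's theorem \cite[Lemma 3]{milnor1983homology}, already invoked in \Cref{BO(2n)}, the map $q\colon BT^{\delta}\to BT$ is a $p$-adic equivalence, so $q^{*}\colon H^{*}(BT;\bF_p)\to H^{*}(BT^{\delta};\bF_p)$ is an isomorphism. Therefore $q^{*}(\kappa_{ep_i}|_{BT})\neq 0$ mod $p$, which forces $\kappa_{ep_i}^{\delta}\neq 0$ in $H^{4i}(\BdDiff(\tW_g);\bZ)$.

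The main obstacle is the nonvanishing claim in the previous paragraph: one must identify the fixed points of the torus action on $\tW_g$ from \cite{galatius2015tautological} together with their tangential weights, and then verify that the symmetric-polynomial sum is nonzero modulo an appropriate prime. The hypotheses of the theorem enter precisely here: $g>1$ provides sufficiently many fixed points with sufficiently varied weights so that the contributions do not all cancel, while $i<n$ ensures that $p_i$ is a genuine generator of $H^{*}(\mathrm{BSO}(2n);\bZ)$ and the localization identity above is not obstructed by top-degree relations.
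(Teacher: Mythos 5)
Your overall strategy is sound in outline---pull back along a compact Lie group acting on $\tW_g$, and close the argument with Milnor's theorem about Lie groups made discrete---but there is a genuine gap at the exact place you flag as ``the main obstacle.'' Your plan requires knowing (i) that the action of interest restricts to a torus with finitely many (isolated) fixed points on $\tW_g=\#_g S^n\times S^n$, (ii) the weights of $T$ on each $T_p\tW_g$, and (iii) that the resulting symmetric-polynomial sum $\sum_{p}\sigma_i\bigl((\alpha^{(p)}_1)^2,\dots,(\alpha^{(p)}_n)^2\bigr)$ is nonzero for all $i<n$ and $g>1$. You assert all three but verify none. This is not a minor technical step: it \emph{is} the content of the theorem, and the hypotheses $g>1$ and $i<n$ enter precisely there. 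Until that localization computation is actually carried out (or reduced to a citation), the argument does not close.

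What the paper does instead is cite \cite[Theorem 4.1(ii)]{galatius2015tautological} directly: Galatius, Grigoriev and Randal-Williams construct an $\mathrm{SO}(n)\times\mathrm{SO}(n)$-action on $\tW_g$ for which $\kappa_{ep_i}$ pulls back nontrivially to $H^*(\mathrm{BSO}(n)\times\mathrm{BSO}(n);\bQ)$ for $g>1$ and $1\le i\le n-1$. This citation already contains the geometric input you are trying to recreate by localization. The paper then applies Milnor's \cite[Corollary 1]{milnor1983homology}, which gives injectivity of $H^*(\mathrm{B}G;\bZ)\to H^*(\mathrm{B}G^\delta;\bZ)$ for an arbitrary Lie group $G$; this is all that is needed, and it applies directly to $G=\mathrm{SO}(n)\times\mathrm{SO}(n)$ without passing to a maximal torus. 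Your appeal to Milnor's \cite[Lemma 3]{milnor1983homology} (the $p$-adic equivalence $\mathrm{B}T^\delta\simeq_p \mathrm{B}T$ for tori) is therefore an unnecessary detour: it is a stronger statement than needed, and using it forces you into the torus-localization setup in which the unverified weight computation sits. If you want to salvage your approach, the cleanest route is to \emph{deduce} the nonvanishing over $\mathrm{B}T$ from GGRW's result by restricting their $\mathrm{SO}(n)\times\mathrm{SO}(n)$-statement to a maximal torus (injectivity of $H^*(\mathrm{BSO}(n)\times\mathrm{BSO}(n);\bQ)\hookrightarrow H^*(\mathrm{B}T;\bQ)$), rather than attempting the Atiyah--Bott computation from scratch---but at that point you have essentially reproduced the paper's proof with an extra step.
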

\begin{proof}
Galatius, Grigoriev and Randal-Williams proved in \cite[Theorem 4.1 (ii)]{galatius2015tautological} that there exists an $SO(n)\times SO(n)$-action on $\tW_g$ where for all $i\in \{1,2,\dots, n-1\}$ and $g>1$, the class $\kappa_{ep_{i}}$ is not in the kernel of the induced map 
\[
H^*(\BDiff(\tW_g);\bQ)\to H^*(\mathrm{BSO}(n)\times \mathrm{BSO}(n);\bQ).
\] 
Hence, we have the following commutative diagram
 \[
 \begin{tikzpicture}[node distance=5cm, auto]
  \node (A) {$H^*(\BDiff(\tW_g);\bZ)$};
  \node (B) [right of=A] {$H^*(\mathrm{BSO}(n)\times \mathrm{BSO}(n);\bZ)$};
  \node (C) [node distance=2cm, below of=A] {$H^*(\BdDiff(\tW_g);\bZ)$};  
  \node (D) [node distance=2cm, below of=B] {$H^*(\mathrm{BSO}(n)^{\delta}\times \mathrm{BSO}(n)^{\delta};\bZ),$};
  \draw[->] (C) to node  {}(D);
  \draw [->] (A) to node {$i$}(C);
  \draw [->] (A) to node {$$} (B);
  \draw [->] (B) to node {$i'$} (D);
\end{tikzpicture}
\]
where by the result of Milnor \cite[Corollary 1]{milnor1983homology}, we know $i'$ is injective. Injectivity of $i'$ implies that the image of $\kappa_{ep_{i}}$ in $H^*(\BdDiff(\tW_g);\bZ)$ is nonzero for all $g>1$.
\end{proof}
Now we want to show that MMM-classes of degrees larger than $4n$ vanish in stable rational cohomology of $\BdDiff(\tW_g)$, which implies that the following map does not admit a section
 \[
\begin{tikzpicture}[node distance=2.9cm, auto]
  \node (A) {$\BdDiff(\tW_g)$};
  \node (B) [right of=A] {$\BDiff(\tW_g).$};
  \draw [->] (A) to node {$$}(B);
\end{tikzpicture}
\]
\begin{prop}\label{vanishing}
If $c$ is a monomial generated by $e, p_{n-1},\cdots, p_{1}$ of degrees larger than $6n$, then $\kappa_c^{\delta}$'s vanish  in $H^*(\BdDiff(\tW_g);\bQ)$.
\end{prop}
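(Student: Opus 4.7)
My plan is to invoke Bott's vanishing theorem applied to the natural foliated structure on the universal flat $\tW_g$-bundle, reducing the claim to an algebraic observation in the rational cohomology of $\mathrm{BSO}(2n)$.

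Let $\pi: E^{\delta}\to \BdDiff(\tW_g)$ denote the universal flat $\tW_g$-bundle. Since the structural group is discrete, $E^{\delta}$ carries a natural codimension-$2n$ foliation $\mathcal{F}$ whose leaves are transverse to the fibres, and the vertical tangent bundle $T_{\pi}E^{\delta}$ is precisely the normal bundle of $\mathcal{F}$. Bott's theorem then guarantees that any polynomial in the Pontryagin classes $p_i(T_{\pi}E^{\delta})$ of total degree greater than $4n$ vanishes in $H^*(E^{\delta};\bQ)$.

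The remaining step is to show that for a monomial $c = e^{k}p_1^{a_1}\cdots p_{n-1}^{a_{n-1}}$ of degree $|c|>6n$, the characteristic class $c(T_{\pi}E^{\delta})$ already vanishes on $E^{\delta}$. Using the rational identity $e^2=p_n$ in $H^*(\mathrm{BSO}(2n);\bQ)$, I would split on the parity of $k$: when $k=2m$ is even, the monomial becomes a pure Pontryagin polynomial of degree $|c|>6n>4n$; when $k=2m+1$ is odd, factor out a single $e$, leaving a Pontryagin polynomial of degree $|c|-2n>4n$. In both cases Bott's theorem forces the relevant Pontryagin class to vanish on $E^{\delta}$, and hence so does $c(T_{\pi}E^{\delta})$. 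Fibre integration then immediately gives $\kappa_c^{\delta} = \pi_{!}(c(T_{\pi}E^{\delta})) = 0$.

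The main subtlety, rather than a real obstacle, is that Bott's theorem does not directly control the Euler class; the threshold $6n$ in the hypothesis is calibrated precisely so that any odd power of $e$ can be written as $e$ times an even power, and then repeated use of $e^2=p_n$ pushes the Pontryagin factor strictly above the Bott bound of $4n$. In particular, this argument fails for $\kappa_{e^3}^{\delta}$ in degree $4n$, whose Pontryagin leftover $p_n$ sits exactly at the Bott threshold, consistent with the strict inequality $|c|>6n$ in the statement being sharp.
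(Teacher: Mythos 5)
Your proof is correct and follows essentially the same route as the paper: invoke Bott's vanishing theorem for the codimension-$2n$ foliation on a flat $\tW_g$-bundle, use $e^2=p_n$ rationally to reduce to a pure Pontryagin monomial (times at most one $e$) of degree exceeding $4n$, and conclude by fibre integration. The only cosmetic difference is that you split on the parity of the exponent of $e$ where the paper splits on whether that exponent is $\le 1$ or $>1$; the underlying mechanism and bound are identical.
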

\begin{proof}
We need to prove that for any flat $\tW_g$-bundle, $E \xrightarrow{\pi} M$, its $\kappa_c^\delta$  vanishes as long as $c>6n$.  Recall that the Bott vanishing theorem \cite{bott1970topological} says for a foliation $\mathcal{F}$ on $E$ of codimension $q$, we have
\[
\text{Pont}^{>2q}(\nu \mathcal{F})=0
\]
where  $\text{Pont}^{>2q}(\nu \mathcal{F})$ is a ring generated by monomials of Pontryagin classes of the normal bundle of $\mathcal{F}$ of degree larger than $2q$. Any flat $\tW_g$-bundle structure on $E$ gives a foliation of codimension $2n$ such that the vertical tangent bundle is the normal bundle of the of the given foliation. Suppose
 \[c=e(\text{T}_{\pi}E)^{a}p_{i_1}(\text{T}_{\pi}E)^{a_1}\cdots p_{i_k}(\text{T}_{\pi}E)^{a_k}\]
  if $a\leq 1$ then we have $\sum 4i_ja_j>4n$; Since by the Bott vanishing theorem, the class  $p_{i_1}(\text{T}_{\pi}E)^{a_1}\cdots p_{i_k}(\text{T}_{\pi}E)^{a_k}$ has to vanish so does $c$. If $a>1$ then we have $4n\left \lfloor {a}/{2}\right \rfloor + \sum 4i_ja_j>4n$, again by the Bott vanishing theorem the class \[p_n(\text{T}_{\pi}E)^{\left \lfloor {a}/{2}\right \rfloor}p_{i_1}(\text{T}_{\pi}E)^{a_1}\cdots p_{i_k}(\text{T}_{\pi}E)^{a_k}\] has to vanish, so does $c$.
\end{proof}

 \begin{rem}\Cref{vanishing} suggests that there should be $\bQ\slash \bZ$-characteristic classes. One can use Cheeger-Simons theory to lift certain MMM-classes to $\bR\slash\bZ$-classes. Let $\tW_g\to E\to M$ be a flat manifold bundle. Let $c\in  H^*(E; \bR)$ be a monomial consisting of Pontryagin classes of the vertical tangent bundle. The Bott vanishing implies that $c$ is zero if $\text{deg}(c)>4n$. Cheeger-Simons theory (see \cite[corollary 2.4]{cheeger1985differential}) associates  a natural secondary characteristic class  to $c$,  denoted by $\hat{c}$ that  lives in $ H^{\text{deg}(c)-1}(E;\bR/\bZ)$ and the Bockstein map
 \[
 \beta: H^{\text{deg}(c)-1}(E;\bR/\bZ)\to H^{\text{deg}(c)}(E;\bZ)
 \]
 sends $\hat{c}$ to $-c$. Therefore  to every monomial of Pontryagin classes in $H^{*}(\mathrm{BSO}(2n); \bZ)$ denoted by  $c$ whose degree is larger than $4n$, one can associate a universal class $\hat{c}\in  H^{\text{deg}(c)-1}(\mathrm{BS}\Gamma_{2n};\bR/\bZ)$. One can also pullback this class to ${\mathrm B}\Gamma_{2n}\langle n\rangle$ and use the Thom isomorphism to obtain a class in $H^{\text{deg}(c)-2n-1}(\bold{MT}\nu^n;\bR/\bZ)$. Let  $\widehat{\kappa_c}$  denote the image of this class under the cohomology suspension map

 \[
\begin{tikzpicture}[node distance=5.9cm, auto]
  \node (C) {$\sigma^*: H^{\text{deg}(c)-2n-1}(\bold{MT}\nu^n;\bR/\bZ)$};
  \node (D) [right of=C] {$H^{\text{deg}(c)-2n-1}(\Omega^{\infty}_0\bold{MT}\nu^n;\bR/\bZ),$};
  \draw[->] (C) to node {$$}(D);
\end{tikzpicture}
\]
if $\text{deg}(c)-2n-1$ lies in the stable range,  we have \[H^{\text{deg}(c)-2n-1}(\Omega^{\infty}_0\bold{MT}\nu^n;\bR/\bZ)=H^{\text{deg}(c)-2n-1}(\BdDiff(\W,\partial);\bR/\bZ).\]
Using the naturality of these classes, similar to \cite[corollary 2.4]{cheeger1985differential}, one can see that  $ \widehat{\kappa_c}$ maps to $-\kappa_c^{\delta}$ under the Bockstein map
\[
\begin{tikzpicture}[node distance=5.9cm, auto]
  \node (C) {$H^{\text{deg}(c)-2n-1}(\BdDiff(\W,\partial);\bR/\bZ)$};
  \node (D) [right of=C] {$H^{\text{deg}(c)-2n}(\BdDiff(\W,\partial);\bZ).$};
  \draw[->] (C) to node {$\beta$}(D);
\end{tikzpicture}
\]

 By the virtue of \Cref{nontorsion}, we know that those $\kappa_c^{\delta}$'s that live in the stable range are non-torison classes in $H^*(\BdDiff(\W,\partial);\bZ)$; thus, corresponding $\widehat{\kappa_c}$'s are nontrivial and non-torsion classes. They induce a map
 \[
\begin{tikzpicture}[node distance=3.9cm, auto]
  \node (C) {$H_{\text{deg}(c)-2n-1}(\BdDiff(\W,\partial);\bZ)$};
  \node (D) [right of=C] {$\bR/\bZ.$};
  \draw[->] (C) to node {$\widehat{\kappa_c}$}(D);
\end{tikzpicture}
\]
Hence for those $c$ with $\text{deg}(c)>4n$, we have $H_{\text{deg}(c)-2n-1}(\BdDiff(\W,\partial);\bZ)$ is nontrivial. 
\end{rem}
\begin{thm}
For $c\in \mathcal{B}$ and $\textit{deg}(c)>6n$, let $k=\textit{deg}(c)-2n-1$. Then for such $k$, the group $H_k(\BdDiff(\WW,\partial);\bZ)$ is not finitely generated.
\end{thm}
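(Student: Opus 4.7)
The starting point is the Cheeger-Simons class $\widehat{\kappa_c}\in H^k(\BdDiff(\WW,\partial);\bR/\bZ)$ constructed in the preceding remark, satisfying $\beta(\widehat{\kappa_c}) = -\kappa_c^{\delta}$ and shown there to be non-torsion. Because $\bR/\bZ$ is divisible (hence injective as an abelian group), universal coefficients gives $H^k(X;\bR/\bZ)\cong \mathrm{Hom}(H_k(X;\bZ),\bR/\bZ)$, so $\widehat{\kappa_c}$ is a homomorphism
\[
\phi\colon H_k(\BdDiff(\WW,\partial);\bZ)\longrightarrow \bR/\bZ.
\]
Every finitely generated abelian group is countable, so its image under $\phi$ would be countable. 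The plan is therefore to show that $\mathrm{image}(\phi)$ is uncountable, which immediately forces $H_k(\BdDiff(\WW,\partial);\bZ)$ to be uncountable, and in particular not finitely generated.

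To produce uncountably many distinct $\phi$-values, I would pass to the stable homology and use the theory of continuously varying secondary characteristic classes of foliations. By \Cref{limit homology}, in the stable range $H_k(\BdDiff(\WW,\partial);\bZ)\cong H_k(\Omega_0^\infty\bold{MT}\nu^n;\bZ)$, and via the cohomology suspension $H^k(\bold{MT}\nu^n;\bR/\bZ)\to H^k(\Omega_0^\infty\bold{MT}\nu^n;\bR/\bZ)$ together with the Thom isomorphism, the class $\widehat{\kappa_c}$ is identified with (the image of) the Cheeger-Simons class $\widehat{c}\in H^{\deg(c)-1}(\mathrm{BS}\Gamma_{2n}\langle n\rangle;\bR/\bZ)$ attached to the Bott-vanishing monomial $c$. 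Since $\deg(c)-1>6n-1>4n=2\cdot\mathrm{codim}$, the Bott-Heitsch-Thurston theory of continuous variation of secondary classes supplies a smooth one-parameter family of codimension-$2n$ foliated bundles over a closed manifold $N$ along which the pairing of $\widehat{c}$ with $[N]$ traces a non-constant continuous path in $\bR/\bZ$. Realising this family through a Lie-group action on $\WW$ (for instance the $\mathrm{SO}(n)\times\mathrm{SO}(n)$-action already used earlier in this section, combined with a smooth family of flat principal-bundle structures over $N$) produces classifying maps $f_t\colon N\to\BdDiff(\WW,\partial)$ depending continuously on $t$, for which $t\mapsto \phi(f_{t,*}[N])$ is a non-constant continuous function $\bR\to \bR/\bZ$. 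The image of such a path is uncountable.

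The main obstacle is the middle step: verifying that the chosen family of flat $\WW$-bundles actually produces a non-constant Cheeger-Simons invariant $\widehat{\kappa_c}$. This reduces to Bott's variational formula for secondary characteristic classes along smooth deformations of flat connections, and is exactly the content of the Bott-Heitsch machinery (equivalent to Hurder's "discontinuous invariants" framework already invoked in \Cref{rem1}). Once that step is in place, $\mathrm{image}(\phi)$ contains the image of a non-constant continuous path in $\bR/\bZ$, hence is uncountable, and the theorem follows.
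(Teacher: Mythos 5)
Your approach is genuinely different from the paper's, and it has a real gap at its central step. The paper's proof is purely algebraic: from \Cref{cor4} the class $\kappa_c^\delta\in H^{k+1}(\Omega^\infty_0\bold{MT}\nu^n;\bZ)$ is \emph{non-torsion}, while \Cref{vanishing} shows it maps to zero in $H^{k+1}(\Omega^\infty_0\bold{MT}\nu^n;\bQ)$. Since the map $\mathrm{Hom}(H_{k+1},\bZ)\to \mathrm{Hom}(H_{k+1},\bQ)$ is injective, $\kappa_c^\delta$ must lie in the $\mathrm{Ext}^1(H_k,\bZ)$ term of the universal coefficient sequence; but $\mathrm{Ext}^1$ of a finitely generated group into $\bZ$ is finite, hence torsion, and a non-torsion element cannot live there. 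No Cheeger--Simons classes, no continuous variation, and no uncountability argument are needed.

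The gap in your proposal is the ``continuous variation'' step. First, the mere fact that $\widehat{\kappa_c}$ is a non-torsion element of $\mathrm{Hom}(H_k;\bR/\bZ)$ does \emph{not} force $H_k$ to be infinitely generated: for example $\mathrm{Hom}(\bZ,\bR/\bZ)=\bR/\bZ$ is full of non-torsion elements. The information that actually does the work (and is what the paper uses) is the \emph{pair} of facts $\beta(\widehat{\kappa_c})=-\kappa_c^\delta\neq 0$ integrally and $\kappa_c^\delta=0$ rationally. Second, your plan to show $\mathrm{image}(\phi)$ is uncountable relies on $\widehat{\kappa_c}$ varying continuously along a family of flat $\W$-bundles. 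That is not what the Bott--Heitsch--Thurston/Hurder results say: the continuously varying secondary classes of foliations are the Godbillon--Vey type classes $h_1 c_J$ (as in \Cref{GV}), not Cheeger--Simons invariants $\widehat{c}$ of Pontryagin monomials. Cheeger--Simons invariants of flat connections are \emph{rigid} under deformation through flat connections, so a one-parameter family of the kind you describe would in fact have constant $\widehat{\kappa_c}$. Relatedly, realizing such a family through the compact Lie group $\mathrm{SO}(n)\times\mathrm{SO}(n)$ cannot produce continuous variation, since representations into compact groups are rigid up to conjugation. You would need an entirely different source of deformation, and even then the rigidity of $\widehat{c}$ blocks the argument. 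Replace the variational step with the paper's universal-coefficient argument and the proof closes immediately.
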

\begin{proof}
 As we proved in \Cref{cor4},  the cohomology class $\kappa_c^{\delta}$ for $c\in \mathcal{B}$ are non-torsion classes in $H^*(\Omega_0^{\infty}\bold{MT}\nu^n;\bZ)$.  However for $c$ whose degree is larger than $6n$, the class $\kappa_c^{\delta}$ lives in the kernel of the natural map
  \[
  \begin{tikzpicture}[node distance=3.9cm, auto]
  \node (A) {$H^*(\Omega_0^{\infty}\bold{MT}\nu^n;\bZ)\otimes \bQ$};
  \node (B) [right of=A] {$H^*(\Omega_0^{\infty}\bold{MT}\nu^n; \bQ). $};
  \draw [->] (A) to node {$$}(B);
\end{tikzpicture}
 \]
 Hence, by the universal coefficient theorem, the group $H_{\text{deg}(\kappa_c)-1}(\Omega_0^{\infty}\bold{MT}\nu^n; \bZ)$ cannot be finitely generated.
 \end{proof}

\subsection{On non-vanishing characteristic classes for flat $\W$-bundles}Since \Cref{limit homology} implies that  the cohomology of $\BdDiff(\WW,\partial)$ is related to the cohomology of  the Haefliger classifying space, we can use the nontriviality of classes in $H^*(\mathrm{BS}\Gamma_{2n};\bZ)$ to prove existence of nontrivial characteristic classes for flat $\tW_g$-bundles.
 
 Consider the following commutative diagram
\[
\begin{tikzcd}
\pi_*(\Omega^{\infty}\bold{MT}\nu^n)\otimes \bQ\arrow{r}\arrow{d}&\pi_*(\bold{MT}\nu^n)\otimes\bQ\arrow{d}\\ H_*(\Omega^{\infty}\bold{MT}\nu^n;\bQ)\arrow[two heads]{r}&H_*(\bold{MT}\nu^n;\bQ).
\end{tikzcd}
\] 
The horizontal maps are induced by the suspension map and the vertical maps are induced by the Hurewicz  map.  The top horizontal map is an isomorphism by the definition of the homotopy groups of a spectra  and the right vertical map is also an isomorphism because of the rational Hurewicz theorem (see \cite[Theorem 7.11]{rudyak1998thom}). Therefore, the bottom horizontal map, is surjective. Recall that the map $\nu:\mathrm{BS}\Gamma_{2n}\to \mathrm{BGL}_{2n}^+(\bR)$ is $(2n+2)$-connected. Hence, the map
\[
\begin{tikzpicture}[node distance=2.8cm, auto]
  \node (A) {$\nu^n: {\mathrm B}\Gamma_{2n}\langle n\rangle$};
  \node (B) [right of=A] {$\mathrm{BGL}_{2n}(\bR)\langle n\rangle$};
  \draw [->] (A) to node {$$}(B);
\end{tikzpicture}
\]
is also $(2n+2)$-connected. Thus the group $H^{2n+2}(\mathrm{BGL}_{2n}(\bR)\langle n\rangle; \bQ)$ injects into the group $H^{2n+2}({\mathrm B}\Gamma_{2n}\langle n\rangle; \bQ)$. Now suppose $n\equiv 3 (\textit{mod } 4)$, then using the connectivity of the map $\nu^n$, one deduces that $c=p_{\frac{n+1}{4}}^2$ is nonzero in $H^{2n+2}({\mathrm B}\Gamma_{2n}\langle n\rangle;\bQ)$. Thus the corresponding $\kappa_c^\delta$ is nontrivial in $H^2(\Omega^{\infty}\bold{MT}\nu^n;\bQ)$. Using this observation and  \Cref{limit homology}, we conclude the following theorem. 
\begin{thm}
For $n\equiv 3 (\textit{mod } 4)$, the generalized MMM class $\kappa_c^\delta$ associated to $c=p_{\frac{n+1}{4}}^2$, is nonzero in $H^2(\BdDiff(\WW, \partial);\bQ)$ as $g\geq 7$.
\end{thm}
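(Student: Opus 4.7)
The plan is to trace $\kappa_c^\delta$ back through the chain of maps
\[
\BdDiff(\WW,\partial) \xrightarrow{\alpha} \Omega_0^\infty \bold{MT}\nu^n \xleftarrow{\sigma^*} \bold{MT}\nu^n \xleftarrow{\mathrm{Thom}} \mathrm{B}\Gamma_{2n}\langle n\rangle \xrightarrow{\nu^n} \mathrm{BGL}_{2n}^+(\bR)\langle n\rangle,
\]
and to check at each step that the relevant map on rational cohomology is injective. Since $p_{(n+1)/4}$ has degree $n+1$, the class $c = p_{(n+1)/4}^2$ has degree $2n+2$; hence, after the Thom isomorphism (a shift by $-2n$) and the cohomology suspension, the associated $\kappa_c^\delta$ indeed lives in degree $2$, as required.

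For the first step, if $g \geq 7$ then $2 \leq (g-2)/2$, so by Theorem \ref{limit homology} the map on $H_2(-;\bQ)$ is a surjection, which dualizes to an injection $\alpha^*\colon H^2(\Omega_0^\infty \bold{MT}\nu^n;\bQ) \hookrightarrow H^2(\BdDiff(\WW,\partial);\bQ)$. For the second step, the diagram appearing in the paragraph preceding the theorem --- built from the rational Hurewicz theorem for spectra and the infinite-loop structure --- produces a surjection $H_*(\Omega_0^\infty \bold{MT}\nu^n;\bQ) \twoheadrightarrow H_*(\bold{MT}\nu^n;\bQ)$, whose dual gives an injection in rational cohomology in every degree, in particular in degree $2$. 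The third step is the Thom isomorphism $H^2(\bold{MT}\nu^n;\bQ) \cong H^{2n+2}(\mathrm{B}\Gamma_{2n}\langle n\rangle;\bQ)$, and the fourth is the injection $(\nu^n)^*\colon H^{2n+2}(\mathrm{BGL}_{2n}^+(\bR)\langle n\rangle;\bQ) \hookrightarrow H^{2n+2}(\mathrm{B}\Gamma_{2n}\langle n\rangle;\bQ)$ arising from the fact that $\nu^n$ is $(2n+2)$-connected. After this reduction, the task becomes showing that $p_{(n+1)/4}^2 \neq 0$ in $H^{2n+2}(\mathrm{BGL}_{2n}^+(\bR)\langle n\rangle;\bQ)$.

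The only non-formal step is the rational cohomology computation of $\mathrm{BGL}_{2n}^+(\bR)\langle n\rangle \simeq \mathrm{BSO}(2n)\langle n\rangle$. Since $\mathrm{BSO}(2n)$ is formal with $H^*(\mathrm{BSO}(2n);\bQ) = \bQ[p_1,\ldots,p_{n-1},e]$, the $n$-connected cover rationally kills exactly the polynomial generators of degree $\leq n$. For $n \equiv 3 \pmod 4$ those killed generators are $p_1,\ldots,p_{(n-3)/4}$ (of degrees $4,8,\ldots,n-3$), whereas $p_{(n+1)/4}$ (degree $n+1$) survives. Consequently,
\[
H^*(\mathrm{BSO}(2n)\langle n\rangle;\bQ) \cong \bQ\bigl[p_{(n+1)/4},\,p_{(n+5)/4},\,\ldots,\,p_{n-1},\,e\bigr]
\]
is a free graded polynomial algebra, inside which $p_{(n+1)/4}^2$ is a nonzero monomial.

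The main obstacle is the bookkeeping in that rational cover calculation. The cleanest justification proceeds via Sullivan minimal models: formality of $\mathrm{BSO}(2n)$ and the freeness of its minimal model imply that killing rational homotopy in degrees $\leq n$ deletes exactly the matching free polynomial generators, with no ``new'' rational classes appearing. Equivalently, one can run the rational Serre spectral sequence inductively on the stages $\mathrm{BSO}(2n)\langle k\rangle \to \mathrm{BSO}(2n)\langle k-1\rangle$ for $k=4,8,\ldots$; at each relevant stage the rational fiber is an odd Eilenberg--MacLane space $K(\bQ,k-1)$ whose generator transgresses onto $p_{k/4}$, and no class survives modulo the image of this transgression. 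Everything else in the argument is a formal diagram chase through the four injections above.
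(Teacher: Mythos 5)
Your proposal is correct and follows essentially the same route as the paper: trace $\kappa_c^\delta$ through the composite $\BdDiff(\WW,\partial)\to\Omega_0^\infty\bold{MT}\nu^n$, the cohomology suspension, the Thom isomorphism, and the $(2n+2)$-connectedness of $\nu^n$, then land the question on $H^{2n+2}(\mathrm{BSO}(2n)\langle n\rangle;\bQ)$. Where you add value is in the final step: the paper leaves ``one deduces that $c=p_{(n+1)/4}^2$ is nonzero in $H^{2n+2}({\mathrm B}\Gamma_{2n}\langle n\rangle;\bQ)$'' as an assertion, whereas you actually carry out the rational computation of the $n$-connected cover (showing $p_1,\dots,p_{(n-3)/4}$ are killed and $\bQ[p_{(n+1)/4},\dots,p_{n-1},e]$ is what remains, so the square of a polynomial generator is nonzero); this is the genuinely non-formal part of the argument and your minimal-model/Serre-spectral-sequence justification is a correct way to see it.
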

\begin{rem}
This result is analogous to the surface case which was proved by Kotschick and Morita \cite{kotschick2005signatures}. They showed that $\kappa_1^\delta$ is nonzero on flat surface bundles, hence it is nonzero in $H^2(\BdDiff(\Sigma_{g,1}, \partial);\bQ)$. In the sequel paper \cite{nariman2015stable}, we further study flat surface bundles. 
\end{rem}
Thurston in an unpublished manuscript  (see \cite{thurstonvariation} and \cite {MR769761}), proved that the Godbillon-Vey class $h_1c_1^n\in H^{2n+1}(\overline{\mathrm{B}\Gamma_n};\bZ)$  (for definition of these secondary characteristic classes consult e.g. \cite{pittie1976characteristic}, \cite{bott1972lectures}) varies continuously  on a foliated trivial bundle with fiber dimension $n$. Therefore, in codimension $2n$, we have a surjective map
\[
\begin{tikzpicture}[node distance=2.9cm, auto]
  \node (A) {$H_{4n+1}(\overline{{\mathrm B}\Gamma_{2n}};\bZ)$};
  \node (B) [right of=A] {$\bR,$};
  \draw [->>] (A) to node {$\int h_1c_1^{2n}$}(B);
\end{tikzpicture}
\]
where $\overline{{\mathrm B}\Gamma_{2n}}$ is homotopy fiber of ${\mathrm B}\Gamma_{2n}\rightarrow \mathrm {BGL}_{2n}(\bR)$. This homotopy fiber classifies foliated trivial bundles with fiber dimension $2n$. By Haefliger's theorem, we know $\overline{{\mathrm B}\Gamma_{2n}}$ is at least $(2n+1)$-connected, so there exists a map from $\overline{{\mathrm B}\Gamma_{2n}}$ to ${\mathrm B}\Gamma_{2n}\langle n\rangle$ that makes the following diagram commute
\[
\begin{tikzpicture}[node distance=1.8cm, auto]
  \node (A) {$\overline{{\mathrm B}\Gamma_{2n}}$};
  \node (B) [right of=A] {${\mathrm B}\Gamma_{2n}.$};
  \node (C) [above of=B] {${\mathrm B}\Gamma_{2n}\langle n\rangle$};
  \draw [->] (A) to node {$$}(B);
  \draw [->] (C) to node {$$}(B);
  \draw [->] (A) to node {$$}(C);
\end{tikzpicture}
\]
Therefore, $h_1c_1^{2n}$ is a nonzero class in $H^{4n+1}({\mathrm B}\Gamma_{2n}\langle n\rangle;\bQ)$ and varies continuously. Hence, the composition of the following maps
\[
\begin{tikzpicture}[node distance=3.9cm, auto]
  \node (A) {$H_{2n+1}(\Omega_0^{\infty}\bold{MT}\nu^n;\bQ)$};
  \node (B) [right of=D] {$H_{4n+1}({\mathrm B}\Gamma_{2n}\langle n\rangle;\bQ)$};
  \node (D) [right of=A]{$H_{2n+1}(\bold{MT}\nu^n;\bQ)$};
  \node (C) [right of=B,node distance=3cm]{$\bR$};
  \draw [->>] (B) to node {$\int h_1c_1^{2n}$}(C);
  \draw [->>] (A) to node {$$}(D);
  \draw [->] (D) to node{$\cong$} (B);
\end{tikzpicture}
\]
is surjective.
Using \Cref{limit homology}, one can conclude the above discussion as follows.
\begin{thm}
The following map is  surjective, provided $g\geq 4n+4$
\[
\begin{tikzpicture}[node distance=3.4cm, auto]
  \node (A) {$H_{2n+1}(\BdDiff(\WW,\partial);\bQ)$};
  \node (B) [right of=A] {$\bR$};
  \draw [->>] (A) to node {$\int h_1c_1^{2n}$}(B);
\end{tikzpicture}
\]
i.e. $H_{2n+1}(\BdDiff(\WW, \partial);\bQ)$ as a vector space over rationals has uncountable dimension.
\end{thm}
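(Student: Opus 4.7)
The plan is to reduce the claim, via \Cref{limit homology}, to a statement about the infinite loop space $\Omega_0^{\infty}\bold{MT}\nu^n$, and then to extract the desired surjection from the chain of maps already assembled in the paragraph preceding the theorem. First, I would observe that the hypothesis $g \geq 4n+4$ is exactly the inequality $2n+1 \leq (g-2)/2$, so \Cref{limit homology} guarantees that the map
\[
H_{2n+1}(\BdDiff(\WW,\partial);\bQ) \longrightarrow H_{2n+1}(\Omega_0^{\infty}\bold{MT}\nu^n;\bQ)
\]
induced by $\alpha$ is surjective. Consequently it suffices to construct, on the target, a surjection onto $\bR$ given by pairing with the Godbillon-Vey class $h_1 c_1^{2n}$.

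Second, I would assemble this surjection from the commutative square just above the theorem. The rational Hurewicz theorem, applied to the connective cover of the spectrum, combined with the fact that the horizontal suspension map $\pi_*(\Omega^{\infty}\bold{MT}\nu^n)\otimes\bQ \to \pi_*(\bold{MT}\nu^n)\otimes\bQ$ is an isomorphism, yields a surjection
\[
H_{2n+1}(\Omega_0^{\infty}\bold{MT}\nu^n;\bQ) \twoheadrightarrow H_{2n+1}(\bold{MT}\nu^n;\bQ).
\]
The Thom isomorphism identifies the right-hand side with $H_{4n+1}(\mathrm{B}\Gamma_{2n}\langle n\rangle;\bQ)$, so everything has been transported to the Haefliger classifying space.

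Third, I would invoke Thurston's continuous variation theorem. Since $\overline{\mathrm{B}\Gamma_{2n}}$ is at least $(2n+1)$-connected by Haefliger's theorem, the natural map $\overline{\mathrm{B}\Gamma_{2n}} \to \mathrm{B}\Gamma_{2n}\langle n\rangle$ lifts through the connective cover and sends the lift of the Godbillon-Vey class to the class of the same name. Thurston's theorem then produces a surjection
\[
\int h_1 c_1^{2n} \colon H_{4n+1}(\overline{\mathrm{B}\Gamma_{2n}};\bZ) \twoheadrightarrow \bR,
\]
which after rationalization factors through $H_{4n+1}(\mathrm{B}\Gamma_{2n}\langle n\rangle;\bQ)$. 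Composing the three surjections above gives the asserted surjection from $H_{2n+1}(\BdDiff(\WW,\partial);\bQ)$ onto $\bR$, and the uncountable-dimension statement is then automatic, since no countable-dimensional $\bQ$-vector space admits a surjection onto $\bR$.

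The only genuinely nontrivial ingredient is Thurston's continuous variation theorem for $h_1 c_1^{2n}$, which is cited rather than reproved; the remaining obstacle, such as it is, is bookkeeping — checking that the lift through the Moore-Postnikov tower is compatible with the Thom isomorphism and with the cohomology suspension, so that the resulting pairing really is the Godbillon-Vey pairing. Once this is verified, the result drops out of \Cref{limit homology} combined with standard stable homotopy machinery.
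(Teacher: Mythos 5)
Your proposal matches the paper's own argument step for step: the hypothesis $g\geq 4n+4$ is used to invoke \Cref{limit homology} for $k=2n+1$; the suspension/Hurewicz square gives the surjection $H_{2n+1}(\Omega_0^{\infty}\bold{MT}\nu^n;\bQ)\twoheadrightarrow H_{2n+1}(\bold{MT}\nu^n;\bQ)$; the Thom isomorphism transports this to $H_{4n+1}(\mathrm{B}\Gamma_{2n}\langle n\rangle;\bQ)$; and Thurston's continuous variation theorem for $h_1c_1^{2n}$, together with the $(2n+1)$-connectivity of $\overline{\mathrm{B}\Gamma_{2n}}$ and the resulting lift to the $n$-connected cover, produces the surjection onto $\bR$. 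The point you flag as bookkeeping — that the Godbillon–Vey pairing descends from $\overline{\mathrm{B}\Gamma_{2n}}$ to $\mathrm{B}\Gamma_{2n}\langle n\rangle$ — is exactly what the paper asserts and does not belabor either, so the two proofs are the same.
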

\begin{rem}\label{rem6.11}
Steve Hurder proved in  \cite[Remark 2.4] {MR769761} that there are at least $3$ continuously varying Godbillon-Vey classes in $H_{4n+1}(\overline{{\mathrm B}\Gamma_{2n}};\bZ)$. Hence, we have at least three continuously varying classes on $\WW$-bundles i.e.
\[
\begin{tikzpicture}[node distance=3.4cm, auto]
  \node (A) {$H_{2n+1}(\BdDiff(\WW,\partial);\bQ)$};
  \node (B) [right of=A] {$\bR^3$};
  \draw [->>] (A) to node {$$}(B);
\end{tikzpicture}
\]

\end{rem}
Since $H_*(\Omega_0^{\infty}\bold{MT}\nu^n;\bQ)$ is a Hopf algebra over $\bQ$, the surjectivity of the following map
\[
\begin{tikzpicture}[node distance=3.9cm, auto]
  \node (A) {$H_{2n+1}(\Omega_0^{\infty}\bold{MT}\nu^n;\bQ)$};
  \node (C) [right of=A,node distance=3cm]{$\bR^3,$};
  \draw [->>] (A) to node {$$}(C);
\end{tikzpicture}
\]
implies that there is a surjective map
\[
\begin{tikzpicture}[node distance=3.9cm, auto]
  \node (A) {$H_{(2n+1)k}(\Omega_0^{\infty}\bold{MT}\nu^n;\bQ)$};
  \node (C) [right of=A,node distance=3cm]{$\bigwedge\nolimits^k_{\bQ}\bR^3,$};
  \draw [->>] (A) to node {$$}(C);
\end{tikzpicture}
\]
where $\bigwedge\nolimits^k_{\bQ}\bR^3$ is the exterior power of $\bR^3$ as a vector space over $\bQ$. Hence, the group $H_{(2n+1)k}(\BdDiff(\WW,\partial);\bQ)$ is also nontrivial for $(2n+1)k\leq (g-3)/2$.
\begin{rem}
We can apply Bowden's idea in \cite{Bowden} to determine the stable homology of $\dDiff(\W,\partial)$ in low homological degrees.  There is a spectral sequence \cite[Theorem 2.3.4]{haller1998perfectness} whose $E^2_{p,q}$ page can be described for $q\leq 3$ as follows
\[
E^2_{p,q}=
\begin{cases}
\bZ & \text{if } p=q=0\\
0& \text{if } q=0, p>0\\
H_p(\W,H_q(\overline{\text{B}\Diff_c(\bR^{2n})})) &\text{if } 0<q\leq3
\end{cases}
\]
it converges to $H_{p+q}(\overline{\text{B}\Diff(\W;\partial)})$ for $p+q\leq 3$. Since there are no differentials in this range, we deduce
\[
H_k(\overline{\text{B}\Diff(\W,\partial)};\bZ)=H_k(\overline{\text{B}\Diff_c(\bR^{2n})};\bZ) \text{ as } k\leq 3
\]
 In particular, $H_0(\overline{\text{B}\Diff(\W,\partial)};\bZ)=\bZ, H_1(\overline{\text{B}\Diff(\W,\partial)};\bZ)=0$.  Using Serre spectral sequence for the following fibration sequence
 \[
  \begin{tikzpicture}[node distance=3.2cm, auto]
  \node (A) {$\overline{\text{B}\Diff(\W,\partial)}$};
  \node (B) [right of=A] {$\BdDiff(\W,\partial)$};
  \node (C) [right of= B ] {$\BDiff(\W,\partial),$};  
  \draw [->] (A) to node {$$}(B);
  \draw [->] (B) to node {$$}(C);
\end{tikzpicture}
 \]
 one can  compute the stable homology of $\BdDiff(\W,\partial)$ in  low homological degrees, using  the computation in \cite{Galatius-Randal-Williams} of the stable homology of $\BDiff(\W,\partial)$. Thus, it is straightforward to see for $g\geq 9$ and $n\geq 3$
 \begin{align*}
 H_1(\BdDiff(\W,\partial);\bZ)&=H_1(\BDiff(\W,\partial);\bZ)\\
 H_1(\BdDiff(\W,\partial);\bQ)&=0\\
 H_2(\BdDiff(\W,\partial);\bQ)&=H_2(\BDiff(\W,\partial);\bQ)\oplus H_2(\overline{\BDiff_c(\bR^{2n})};\bQ)\\
H_3(\BdDiff(\W,\partial);\bQ)&=H_3(\overline{\BDiff_c(\bR^{2n})};\bQ)\\
 \end{align*}
For $g\geq 5$ and $n\geq 3$, the first homology  $H_1(\BDiff(\W,\partial);\bZ)$ has been calculated in \cite[Theorem 1.3]{galatius2015abelian}.
\end{rem}

\bibliographystyle{alpha}
\bibliography{reference}
\end{document}